\theoremstyle{plain}
\newtheorem{thm}{Theorem}[section]
\newtheorem{lem}[thm]{Lemma}
\newtheorem{cor}[thm]{Corollary}
\numberwithin{equation}{section}
\begin{document}

\title[Self-similar solutions to the curve shortening flow]{Self-similar solutions \\ to the curve shortening flow}


\author{Hoeskuldur P. Halldorsson}
\address{Department of Mathematics, Massachusetts Institute of Technology, 77 Massachu-setts Avenue, Cambridge, Massachusetts 02139-4307}
\email{hph@math.mit.edu}
\thanks{}

\subjclass[2010]{Primary 53C44}

\date{}

\dedicatory{}

\begin{abstract}
We give a classification of all self-similar solutions to the curve shortening flow in the plane.
\end{abstract}

\maketitle


\section{Introduction}
Some examples are known of immersed curves in the plane which move in a self-similar manner under the curve shortening flow (CSF). The two obvious ones are the straight line, which is not affected by the flow, and the circle, which shrinks homothetically to a point in finite time. The circle is the only simple closed curve which shrinks homothetically under the flow, but there also exist immersed closed curves with the same property. They were classified by Abresch and Langer in \cite{al}. A discussion about homothetically expanding curves can be found in \cite{ish} and \cite{urb}. The Grim Reaper curve is the graph of the function $-\log \cos (x)$. Calabi discovered that it translates upwards with constant speed under the flow. In \cite{alt}, Altschuler shows a picture of a yin-yang spiral and claims that it rotates under the flow. Some of these curves had earlier appeared in the physics literature \cite{mull}.

So we have curves which shrink, expand, rotate and translate under 
the flow. But are there any other self-similar solutions which combine 
two or more of these basic motions? The answer to that question is 
yes, and in this paper we find and classify all immersed curves in the plane which
move in a self-similar manner under the CSF. Our main results can be seen in the following list:

\begin{description}
\item[Translate] Only the Grim Reaper curve; Figure \ref{GrimReaper}.
\item[Expand] A one-dimensional family of curves. Each is properly embedded and asymptotic to the boundary of a cone;  Figures \ref{graf1}-\ref{graf2}.
\item[Shrink] A one-dimensional family of curves. Each is contained in an annulus and consists of identical excursions between its two boundaries; Figures \ref{AL1}-\ref{AL4}.
\item[Rotate] A one-dimensional family of curves. Each is properly embedded and spirals out to infinity; Figures \ref{A1B0samhverfi}-\ref{A1B0b3}.
\item[Rotate and expand] A two-dimensional family of curves. Each is properly embedded and spirals out to infinity; Figures \ref{A1B025samhverfi}-\ref{A1B025b3}.
\item[Rotate and shrink] A two-dimensional family of curves. Each has one end asymptotic to a circle, and the other is either asymptotic to the same circle or spirals out to infinity; Figures \ref{A1Bm005samhverfi}-\ref{A1Bm5halastjarnan}.
\end{description}

In Section \ref{allirfundnir}, we show how the problem of finding all self-similar solutions to the flow can be reduced to the study of a two-dimensional nonlinear system of ODEs. The system has two parameters, $A$ and $B$, which determine the type of the self-similar motion. The values of $A$ and $B$ are divided into four cases which yield curves with significantly different behaviour under the flow. In Sections \ref{primero}-\ref{ultimo} we look at each case separately and prove some properties of the corresponding curves by studying the solutions to the system of ODEs.

Section \ref{myndir} contains figures of the curves.

\section{All self-similar solutions found}
\label{allirfundnir}

Let $X: \mathbf R \rightarrow \mathbf C$ be an immersed curve in the plane. A self-similar motion of the curve is a complex valued function $\hat X$ on $\mathbf R \times  I$  of the form
\begin{equation}
\label{hreyfing}
\hat X(u,t) = g(t)e^{if(t)}X(u) + H(t), 
\end{equation}
where $I$ is an interval containing 0 and $f,g: I\rightarrow \mathbf R$ and $H: I \rightarrow \mathbf C$ are differentiable functions s.t.~ $f(0) = 0$, $g(0) = 1$ and $H(0) = 0$ and hence $\hat X(u,0) = X(u)$. The function $f$ determines the rotation, $g$ determines the scaling and $H$ is the translation term.

By definition, this motion is the curve shortening flow of the curve $X$ (up to tangential diffemorphisms) if and only if the equation
\begin{equation*}
\langle\frac{\partial \hat X}{\partial t}(u,t),N(u,t)\rangle = k(u,t)
\end{equation*}
holds for all $(u,t)\in \mathbf R \times I$. Here $N(u,t) = i\hspace{0.1cm} T(u,t)$ is the leftward pointing normal to the curve $u \mapsto \hat X(u,t)$ and the (signed) curvature $k$ is given by $k(u,t) = \langle \kappa(u,t),N(u,t)\rangle$, where $\kappa$ is the curvature vector of the curve. Straightforward calculations yield that this equation is equivalent to
\begin{equation}
\begin{aligned}
\label{allt}
g^2(t)f'(t)\langle X(u),T(u)\rangle &+ g(t)g'(t)\langle X(u),N(u)\rangle   \\
&+ g(t)\langle e^{-if(t)}H'(t),N(u)\rangle = k(u). 
\end{aligned}
\end{equation}
Now, we want this equation to hold for all $(u,t)\in \mathbf R \times I$. In particular, it must hold when $t=0$. Thus, the original curve $X$ has to satisfy the equation
\begin{equation}
\label{alltiupphafi}
A\langle X(u),T(u) \rangle + B \langle X(u),N(u) \rangle + \langle C, N(u) \rangle= k(u)
\end{equation}
for all $u\in \mathbf R$, where $A = f'(0)$, $B=g'(0)$ and $C=H'(0)$. \\

First we look at the case where the translation term vanishes so $C=0$. Assume $X$ is an immersed curve satisfying the equation
\begin{equation}
\label{jafnan}
A\langle X,T \rangle + B \langle X,N \rangle = k.
\end{equation}
Now, if we pick the functions $f$ and $g$ such that $g^2(t)f'(t)=A$ and $g(t)g'(t) = B$ for all $t\in I$, then it is clear that Equation \eqref{allt} holds for all $(u,t)\in \mathbf R \times I$. A valid choice for $f$ and $g$ is
\begin{equation*}
f(t) = \begin{cases} \frac{A}{2B}\log(2Bt+1) & \text{if } B\neq0,\\
At & \text{if } B=0, \end{cases} \quad \text{and} \quad g(t) = \sqrt{2Bt+1}.
\end{equation*}
Hence, under the CSF, the curve $X$ moves in the self-similar manner governed by $f$ and $g$ as shown in Equation \eqref{hreyfing}. In other words, it rotates around the origin (unless $A=0$) and dilates outwards if $B>0$ and inwards if $B<0$. This motion will be called screw-dilation.\\

What happens if we include the translation term $H$? Assume that the curve $X$ satisfies Equation \eqref{alltiupphafi} for all $u\in \mathbf R$. If $A$ and $B$ are not both $0$, we put $\tilde{X}(u)  = X(u) + \frac{C}{B+iA}$, and direct calculations yield that the curve $\tilde X$ satisfies Equation \eqref{jafnan}. But, as we saw before, that means  $\tilde X$ screw-dilates around 0 and hence $X$ screw-dilates around the point $-\frac{C}{B+iA}$. So when we have rotation or dilation (or both), introducing the translation term doesn't give us any new solutions.

However, if both $A$ and $B$ are 0 we can take $f=0$, $g=1$, $H=Ct$, and then it is clear that Equation \eqref{allt} is satisfied for all $(u,t)\in \mathbf R \times I$. Thus, the curve $X$ moves by translation in the direction $C$ under the CSF. Hence, the curve is either a straight line with $C$ as its direction vector (making the translation vacuous) or a part of the curve can be written as a graph of a function defined on a line perpendicular to the direction $C$. However, in \cite{eck}, it is shown that in this case the curve is the (possibly rescaled) graph of the function $x\mapsto -\log(\cos x)$, the so-called Grim Reaper curve; see Figure \ref{GrimReaper}.

So the problem has been reduced to finding all immersed curves $X:\mathbf R \rightarrow \mathbf C$ satisfying Equation \eqref{jafnan}. From now on we will parametrize the curves by arc length. For every curve $X$, we have
\begin{equation*}
\begin{aligned}
\frac{d}{ds}\langle X,T \rangle  &= 1 + k\langle X,N \rangle, \\
\frac{d}{ds}\langle X,N \rangle  &= - k\langle X,T \rangle.
\end{aligned}
\end{equation*}
Thus, if we define the functions
\begin{equation*}\begin{aligned}
x&=A\langle X,T \rangle + B \langle X,N \rangle,\\
y &=-B\langle X,T \rangle + A \langle X,N \rangle ,
\end{aligned}\end{equation*}
which can also be written as
\begin{equation*}
x+iy = (A-iB)(\langle X,T\rangle + i\langle X,N\rangle),
\end{equation*} then they satisfy
\begin{equation*}\begin{aligned} 
x' &=ky+A,\\
y' &=-kx-B.
\end{aligned}\end{equation*}
Note that we always have
\begin{equation*}\begin{aligned} 
X &=\langle X,T \rangle T + \langle X,N \rangle N\\
&=T( \langle X,T \rangle + i \langle X,N \rangle)\\
&=e^{i\theta} \frac{x+iy}{A-iB},
\end{aligned}\end{equation*}
where $\theta(s) = \int_0^s k(s')ds' + \theta_0$ and  $T(0) = e^{i\theta_0}$.

Now, we are looking for curves $X$ which satisfy the equation $x = k$, and by what was shown above, the following method yields all possible curves with that property. We let $x$ and $y$ be the unique solution to the system of differential equations
\begin{equation*}
\left\{\begin{aligned}
x' &=xy+A,\\
y' &=-x^2-B
\end{aligned}
\right.
\end{equation*}
for any initial values $x(0) = x_0$ and $y(0)=y_0$. Note that
\begin{equation*}
\frac{d}{ds}\sqrt{x^2+y^2} = \frac{Ax-By}{\sqrt{x^2+y^2}} \leq \sqrt{A^2+B^2},
\end{equation*}
so $x$ and $y$ can't blow up in finite time, and hence the solution is defined on all of $\mathbf R$.
Then we put  $\theta(s) = \int_0^s x(s')ds' + \theta_0$ for any $\theta_0$. Finally, we define the curve as
\begin{equation*}
\label{Xjafnan}
X = e^{i\theta} \frac{x+iy}{A-iB}.
\end{equation*}
Note that
\begin{equation*}\begin{aligned}
X' &=e^{i\theta}\frac{i\theta ' (x+iy) + x'+iy'}{A-iB}\\ &=e^{i\theta}\frac{ix (x+iy) + xy+A+i(-x^2-B)}{A-iB} \\ &=e^{i\theta},
\end{aligned}\end{equation*}
so $X$ is parametrized by arc length with tangent $T=e^{i\theta}$, and hence the curvature $k$ is equal to $\theta' = x$. Finally,
\begin{equation*}\begin{aligned}
A\langle X,T \rangle +B\langle X,N \rangle&=\langle X,(A+iB)T \rangle \\&=\text{Re}(X(A-iB)e^{-i\theta})\\ &=x\\&=k,
\end{aligned}\end{equation*}
so Equation \eqref{jafnan} is indeed satisfied. Hence, we have proved the following.

\begin{thm}
For each value of $A$ and $B$ there exists an immersed curve $X$ satisfying Equation \eqref{jafnan}.
\end{thm}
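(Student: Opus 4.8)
The plan is to follow the explicit construction already outlined above, with one case distinction up front. If $A=B=0$, equation \eqref{jafnan} reduces to $k=0$, so any straight line is a curve with the required property and we are done; from now on I would assume $(A,B)\neq(0,0)$, which is exactly what is needed for $A-iB\neq 0$ and hence for the formula $X=e^{i\theta}\frac{x+iy}{A-iB}$ to make sense.

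For the main case, I would first apply the standard existence theorem for ODE systems to obtain, for arbitrary initial data $x(0)=x_0$, $y(0)=y_0$, a solution $(x,y)$ of
\[
x'=xy+A,\qquad y'=-x^2-B
\]
on some maximal interval. The key step — and the only one that is not mere bookkeeping — is promoting this to a solution on all of $\mathbf R$. For that I would compute $\frac{d}{ds}\sqrt{x^2+y^2}=\frac{Ax-By}{\sqrt{x^2+y^2}}$ and bound the right-hand side by $\sqrt{A^2+B^2}$ via the Cauchy--Schwarz inequality, so that $\sqrt{x^2+y^2}$ grows at most linearly in $s$ along any solution. Since the solution therefore stays in a bounded set over any finite $s$-interval, it cannot leave every compact set in finite time, and so it extends globally.

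With the global solution in hand I would set $\theta(s)=\int_0^s x(s')\,ds'+\theta_0$ and define $X=e^{i\theta}\frac{x+iy}{A-iB}$. Differentiating and substituting the two ODEs gives $X'=e^{i\theta}\frac{ix(x+iy)+(xy+A)+i(-x^2-B)}{A-iB}=e^{i\theta}$, so $X$ is an immersed (indeed unit-speed) curve parametrized by arc length, with tangent $T=e^{i\theta}$, leftward normal $N=iT$, and signed curvature $k=\theta'=x$. It then remains only to verify equation \eqref{jafnan}: we have $A\langle X,T\rangle+B\langle X,N\rangle=\langle X,(A+iB)T\rangle=\text{Re}\,\bigl((A-iB)e^{-i\theta}X\bigr)=\text{Re}(x+iy)=x=k$, which is what was wanted. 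So the whole proof is: separate out the trivial case, secure global existence for the ODE system through the a priori bound above, and then carry out the displayed complex-arithmetic verification — the a priori bound being the one nontrivial ingredient.
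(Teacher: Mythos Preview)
Your proposal is correct and follows the paper's own construction essentially verbatim: the same ODE system, the same a priori bound $\frac{d}{ds}\sqrt{x^2+y^2}\le\sqrt{A^2+B^2}$ to secure global existence, the same definition $X=e^{i\theta}\frac{x+iy}{A-iB}$, and the same two-line verification that $X'=e^{i\theta}$ and that \eqref{jafnan} holds. The only addition is your explicit handling of the degenerate case $A=B=0$, which the paper's formula does not cover and which the paper treats separately later; this is a sensible clarification but not a different argument.
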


In fact, unless $A$ and $B$ are both 0, we have a whole family of immersed curves satisfying Equation \eqref{jafnan}. The rest of this paper will be devoted to classifying these curves. 

There is a lot of symmetry in the problem. Each curve is determined by the three parameters 
$x_0$, $y_0$ and $\theta_0$.  However, changing $\theta_0$ only rotates the curve, so in order to avoid getting identical curves we will from now on assume $\theta_0=0$.  Also note that rescaling the value of $(A,B)$ by $c$ corresponds to rescaling the curve $X$ by $c^{-\frac{1}{2}}$.

Define the functions $F(x,y) = xy+A$ and $G(x,y) = -x^2-B$ such that we have
\begin{equation*}\left\{\begin{aligned} 
x' &=F(x,y),\\
y' &=G(x,y).
\end{aligned}\right.\end{equation*}
Note that $F(-x,-y) = F(x,y)$ and $G(-x,-y) = G(x,y)$, and therefore $s \mapsto -(x(-s),y(-s))$ is also a solution to the system of ODEs, which of course just corresponds to parametrizing the curve $X$ backwards. This symmetry will be used to simplify some arguments.

It will turn out to be convenient to also work with the functions $\langle X,T\rangle$ and $\langle X,N\rangle$, which from here on will be denoted by $\tau$ and $\nu$, respectively. Recall that
\begin{equation*}
\label{xytaueta}
x+iy = (A-iB)(\tau+i\nu),
\end{equation*}
and since the curvature equals $x$, their derivatives are given by
\begin{equation*}\begin{aligned}
\tau' &=1+x\nu,\\
\nu' &=-x\tau.
\end{aligned}\end{equation*}
Hence, they are solutions to the system of differential equations
\begin{equation*}\left\{\begin{aligned}
\tau' &=1+A\tau\nu+B\nu^2,\\
\nu' &=-A\tau^2-BÐ\tau\nu,
\end{aligned}\right.\end{equation*}
and their initial values satisfy the equation $x_0+iy_0= (A-iB)(\tau_0+i\nu_0)$.

The curve $X$ can therefore be written as
\begin{equation}
\label{EnneinXjafnan}
X = e^{i\theta} \frac{x+iy}{A-iB} = e^{i\theta}(\tau+i\nu) = e^{i\phi}r,
\end{equation}
where
\begin{equation*}
r  = |X| = \sqrt{\tau^2+\nu^2} = \frac{\sqrt{x^2+y^2}}{\sqrt{A^2+B^2}} 
\end{equation*}
and
\begin{equation*}
\phi = \theta +\arg(\tau+i\nu) = \theta + \arg(x+iy) + \arg(A+iB).
\end{equation*}
\\

All possible values of $A$ and $B$ will be divided into 4 cases which yield curves with significantly different properties and behaviour under the flow. Those are:
\begin{itemize}
\item $A \neq 0$ and $B\geq 0$: Curves which rotate and expand.
\item $A \neq 0$ and $B< 0$: Curves which rotate and shrink.
\item $A=0$ and $B<0$: Shrinking curves.
\item $A=0$ and $B>0$: Expanding curves.
\end{itemize}
The case $A=B=0$ only yields straight lines and is therefore omitted. \\

Before we look at each case separately,  let's address a simple question: 
Can the curves have double points? Note that a double point would have to be transversal by the uniqueness of our system of ODEs.
Now, assume $X$ is any curve with a transversal double point. Then $X$ has a loop, $\Gamma$. Let the curve be parametrized such that the region enclosed by the loop is on the left side of the curve, and let $\alpha \in (0,2\pi)$ be the interior angle of the loop at the double point. If we let the curve $X$ flow under the CSF, then the time derivative at time $0$ of the area enclosed by the loop is
\begin{equation}
\label{flatarmal}
-\int_\Gamma k \hspace{0.1cm}ds = - (\pi+\alpha) < 0.
\end{equation}
Hence, the area enclosed by any loop is always strictly decreasing under the CSF. That means that if a curve expands under the flow (corresponding to $B \geq 0$), then it can't have any double points since the area enclosed by the corresponding loops would be increasing.

\section{Curves which rotate and expand}
\label{primero}

For each $A\neq0$ and $B\geq 0$, we have the following description of the curves satisfying Equation \eqref{jafnan}.
\begin{thm}
\label{snuaut}
The curves are properly embedded, have one point closest to the origin and consist of two arms coming out from this point which strictly go away from the origin to infinity. Each arm has infinite total curvature and spirals infinitely many circles around the origin. The curvature goes to $0$ along each arm, and their limiting growing direction is $B+iA$ times the location.

The curves form a one-dimensional family parametrized by their distance to the origin, which can take on any value in $[0,\infty)$.

If $B=0$, then under the CSF the curves rotate forever with constant angular speed $A$.

 If $B>0$, then under the CSF the curves rotate and expand forever with angular function $\frac{A}{2B}\log(2Bt+1)$ and scaling function $\sqrt{2Bt+1}$.
\end{thm}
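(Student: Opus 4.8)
The plan is to read off every assertion from the planar system $x'=xy+A$, $y'=-x^2-B$ and the reconstruction $X=e^{i\theta}(x+iy)/(A-iB)$, $\theta=\int_0^s x$, using throughout that $k=x$, that $\tau=\langle X,T\rangle=(Ax-By)/(A^2+B^2)$ and $\nu=\langle X,N\rangle=(Bx+Ay)/(A^2+B^2)$ satisfy $\tau'=1+x\nu$, $\nu'=-x\tau$, $x=A\tau+B\nu$, $y=A\nu-B\tau$, and that $s\mapsto-(x(-s),y(-s))$ merely reverses the curve. Two items cost nothing. The CSF-motion statements are just a restatement of Section \ref{allirfundnir}: a curve obeying \eqref{jafnan} screw-dilates with the explicit $f,g$ produced there, which for $B=0$ is $f(t)=At$, $g\equiv1$ (pure rotation at angular speed $A$) and for $B>0$ is exactly the claimed rotation--expansion. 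And the curves are embedded: for $B\ge0$ the scaling function $g$ is non-decreasing, so a loop would enclose non-decreasing area, contradicting \eqref{flatarmal}, while any self-intersection is a transversal double point by uniqueness of the ODE and hence would create such a loop. Finally, reflection across the real axis carries the $(A,B)$-curves to the $(-A,B)$-curves (replacing $(x,y,\theta)$ by $(-x,y,-\theta)$ and $X$ by $\bar X$), so I assume $A>0$ from now on.

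Next I study $r=|X|=\sqrt{\tau^2+\nu^2}$. From the formulas for $\tau',\nu'$ one gets the identity $r'=\tau/r$, so the critical points of $r$ are exactly the zeros of $\tau$; at such a point $x=B\nu$, hence $r''=\tau'/r=(1+B\nu^2)/r>0$ because $B\ge0$. Thus every critical point of $r$ is a strict local minimum --- so there is at most one --- and every zero of $\tau$ is a transversal upward crossing. I then show $r\to\infty$ as $s\to\pm\infty$: for $B>0$ this is immediate since $y'\le-B$ forces $y\to\mp\infty$ while $r^2\ge y^2/(A^2+B^2)$; for $B=0$ the system has no equilibrium ($A\neq0$) and no periodic orbit (on one, $y$ would be constant, forcing $x\equiv0$ and then $x'=A\neq0$), so by Poincar\'e--Bendixson the trajectory is unbounded in each time direction, and since $r$ is eventually monotone it tends to $\infty$; monotonicity of $y$ then also gives $y\to\mp\infty$. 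Combining: $r$ has a unique critical point $s_*$, which is a strict global minimum (the unique closest point, equal to the origin itself precisely when the trajectory meets $0$), with $\tau<0$ on $(-\infty,s_*)$ and $\tau>0$ on $(s_*,\infty)$ and $r$ strictly monotone on each side --- two arms strictly receding to infinity. For the parametrisation: at $s_*$ one has $(x,y)=\pm r_{\min}(B,A)$ where $r_{\min}=r(s_*)$, so after normalising ($s_*=0$, then a rotation) the curve is determined by $r_{\min}$ alone, the two signs giving the same curve via $s\mapsto-s$; and every value $r_{\min}\in[0,\infty)$ occurs, since the initial point $(B\rho,A\rho)$ has $\tau_0=0$ and $|\nu_0|=\rho$. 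This is the one-parameter family parametrised by distance to the origin.

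For the behaviour along an arm, say as $s\to+\infty$, I pass to polar data $X=re^{i\phi}$, $T=e^{i\theta}$ and put $\beta=\theta-\phi$. From $X'=T$ one gets $r'=\cos\beta$ and $\phi'=\sin\beta/r$; from $x=A\tau+B\nu$ together with $\tau=r\cos\beta$, $\nu=-r\sin\beta$ one gets $x=r\sqrt{A^2+B^2}\cos(\beta+\delta)$ with $\delta=\arg(A+iB)\in[0,\pi/2)$, hence $\beta'=x-\sin\beta/r=r\sqrt{A^2+B^2}\cos(\beta+\delta)-\sin\beta/r$. On the arm $r'=\cos\beta>0$, so $\beta$ stays within a half-period of the cosine, in particular bounded; and since $r\to\infty$ the term $r\sqrt{A^2+B^2}\cos(\beta+\delta)$ dominates the bounded term $\sin\beta/r$ and, by a barrier argument at $\beta=\pi/2-\delta\pm\varepsilon$, forces $\beta\to\pi/2-\delta$ (when $B=0$ the limit is the endpoint $\pi/2$, the barriers are one-sided, and one uses in addition that $\beta'>0$ whenever $\beta$ is near $-\pi/2$). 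From $\beta\to\pi/2-\delta$ I then read off that $r'=\cos\beta\to B/\sqrt{A^2+B^2}$ (so the arm grows linearly if $B>0$, sublinearly if $B=0$) and that $\arg T-\arg((B+iA)X)=\beta-\arg(B+iA)=\beta-(\pi/2-\delta)\to0$, i.e.\ the tangent becomes parallel to $(B+iA)X$. That the curvature $k=x$ tends to $0$ I obtain directly in the plane instead: since $y\to-\infty$, fix $\varepsilon>0$ and $M$ with $\varepsilon M>|A|$; once $y\le-M$ the set $\{\,|x|\ge\varepsilon\,\}$ is repelling for the $x$-equation (if $x\ge\varepsilon$ then $x'=xy+A\le-\varepsilon M+|A|<0$, and symmetrically below), so $|x|<\varepsilon$ eventually. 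Finally, for the winding: $\beta$ is bounded, so $\phi\to\pm\infty$ iff $\theta=\int_0^s x\to\pm\infty$; writing $x=\beta'+\sin\beta/r$ with $\sin\beta\to\cos\delta=A/\sqrt{A^2+B^2}>0$ and using $r(s)\le s+C$ (from $r'\le1$), one gets $\theta(s)\ge \mathrm{const}+c\int^s\!ds'/r(s')\to+\infty$, whence $\phi\to+\infty$ (the arm spirals around $0$ infinitely often) and $\int^\infty|k|\ge\int^\infty x=+\infty$ (infinite total curvature). The $s\to-\infty$ arm is handled by the $s\mapsto-s$ symmetry.

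I expect the main obstacle to be this last paragraph --- the asymptotic analysis of the system along an arm --- and within it two points in particular: the convergence $\beta\to\pi/2-\delta$, a non-autonomous slow-variable argument in which one must check that the $1/r$ correction cannot prevent $\beta$ from settling (and sort out the one-sided endpoint behaviour when $B=0$), and the divergence $\int^\infty ds/r=\infty$, which relies on the a priori linear bound $r(s)=O(s)$. By comparison, the radial analysis (one closest point, escape to infinity, two monotone arms), the description of the one-parameter family, and the quotation of the CSF motion and of embeddedness from Section \ref{allirfundnir} are routine.
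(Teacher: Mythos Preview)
Your proof is correct but follows a genuinely different route from the paper's. The paper stays entirely in the $(x,y)$ phase plane and proceeds by a chain of short lemmas: first $x$ has at most one zero and at most two extrema, then $x$ is bounded, then $y\to\mp\infty$ (for $B=0$ via Barbalat's lemma applied to $y$), then $r\to\infty$ and $r$ has exactly one critical point, then $x\to0$, and finally the limiting ratios $\tau/\nu$, $\tau/r$, $\nu/r$ are read off from $x\to0$, $y\to\mp\infty$; winding is obtained from the identity $d\phi/d\log r=-\nu/\tau\to A/B$ together with $\log r\to\infty$.

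You instead organise the asymptotics around the single angular variable $\beta=\theta-\phi$ and its non-autonomous equation $\beta'=r\sqrt{A^2+B^2}\cos(\beta+\delta)-\sin\beta/r$, funnelling $\beta$ to $\pi/2-\delta$ by barriers; and you replace Barbalat by Poincar\'e--Bendixson to get $r\to\infty$ when $B=0$, and replace the $d\phi/d\log r$ argument by the more direct $\int ds/r=\infty$ (from $r'\le1$). The paper's approach is more elementary --- no dynamical-systems theorems beyond basic ODE monotonicity, and no new coordinate --- but it costs a longer sequence of separate lemmas; your approach is more geometric and packages the limiting direction, the decay of curvature, and the winding into one picture via $\beta$, at the price of a barrier/slow-variable argument that needs some care (exactly where you flag it), particularly the endpoint case $B=0$ where the attractor $\pi/2$ sits on the boundary of the admissible range $(-\pi/2,\pi/2)$. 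That step can be made rigorous along the lines you sketch (once $\beta\le0$ both terms in $\beta'$ are nonnegative, so $\beta$ cannot accumulate at $-\pi/2$; then on any compact subinterval of $(-\pi/2,\pi/2)$ the $r\cos\beta$ term dominates), but it is the one place where the paper's bare-hands $(x,y)$ analysis is noticeably shorter.
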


Curves of this kind can be seen in Figures \ref{A1B0samhverfi}-\ref{A1B025b3}.\\

The proof will be the result of a series of lemmas.
By reflecting the curve if necessary, we can assume $A>0$. We start with a simple observation.

\begin{lem}
\label{fyrsta}
The curvature $x$ has at most one zero, is negative before it and positive after it. Moreover, $x$ has at most two extrema, a minimum below $0$ and a maximum above $0$.
\end{lem}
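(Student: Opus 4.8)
The plan is to argue directly in the planar system $x' = xy + A$, $y' = -x^2 - B$ (with $A > 0$, after reflecting the curve, and $B \ge 0$), using only two elementary facts: the sign of $x'$ at a zero of $x$, and a second–derivative test at the critical points of $x$.

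First I would record what happens at a zero of $x$: if $x(s_0) = 0$ then $x'(s_0) = A > 0$, so $x$ increases strictly through $s_0$. In particular each zero of $x$ is isolated and is a transversal crossing from negative to positive values, and this already forbids two zeros. Indeed, if $s_1 < s_2$ were two consecutive zeros, then $x$ would keep a constant sign on $(s_1, s_2)$; but $x'(s_1) > 0$ forces that sign to be positive, while $x'(s_2) > 0$ forces $x < 0$ just to the left of $s_2$, a contradiction. Hence $x$ has at most one zero $s_0$. Since $x$ is nonzero, hence of constant sign, on each of $(-\infty, s_0)$ and $(s_0, \infty)$, the crossing behaviour at $s_0$ pins these signs down: $x < 0$ before $s_0$ and $x > 0$ after it.

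Next I would analyse the critical points of $x$. At a point $s_1$ with $x'(s_1) = 0$ we have $x(s_1)\,y(s_1) = -A \ne 0$, so $x(s_1) \ne 0$; differentiating $x' = xy + A$ gives $x'' = x'y + x(-x^2 - B)$, hence at $s_1$
\[
x''(s_1) = -x(s_1)\bigl(x(s_1)^2 + B\bigr),
\]
which, since $B \ge 0$ and $x(s_1) \ne 0$, is nonzero with sign opposite to that of $x(s_1)$. Therefore every critical point of $x$ is nondegenerate, one with $x > 0$ is a strict local maximum and one with $x < 0$ is a strict local minimum; in particular $x$ has no critical point at which it vanishes, every local maximum of $x$ has positive value and every local minimum has negative value.

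It remains to show that $x$ has at most one local minimum and at most one local maximum, which together with the previous paragraph gives the statement about the extrema. Suppose $s_1 < s_2$ were two local minima, and let $s_3 \in [s_1, s_2]$ be a point where $x$ attains its maximum over $[s_1, s_2]$. Strictness of the minima at $s_1$ and $s_2$ excludes $s_3$ from being an endpoint, so $s_3$ is interior and hence a critical point of $x$; it cannot have $x(s_3) < 0$ (that would make $s_3$ a strict local minimum, contradicting maximality) nor $x(s_3) = 0$, so $x(s_3) > 0$. Then $x(s_1) < 0 < x(s_3)$ and $x(s_3) > 0 > x(s_2)$ produce two distinct zeros of $x$, contradicting the first part; the symmetric argument, with a local minimum caught between two local maxima, rules out a second local maximum. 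The one step that needs genuine care — and where I expect to spend the most effort — is this last one: verifying that the extremum trapped ``in between'' is really an interior, nondegenerate critical point of the expected sign, so that it forces two zeros of $x$. Everything else follows immediately from $A > 0$ (used at the zeros of $x$) and $B \ge 0$ (used in the second–derivative test).
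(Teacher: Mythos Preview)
Your proposal is correct and follows essentially the same approach as the paper: both arguments use that $x'=A>0$ at any zero of $x$ to get at most one zero, and both compute $x''=-x(x^2+B)$ at a critical point to classify extrema by the sign of $x$. You are simply more explicit than the paper in spelling out why this classification forces at most one maximum and one minimum (the paper just asserts it), but the underlying ideas are identical.
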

\begin{proof}
The first statement is true, since if $x(s) = 0$, then $x'(s) = A >0$, so $x$ has to grow through each of its zeros and thus can only have one. If $x'(s) =0$, then $x''(s) = -x(s)(x^2(s)+B)$, so all extrema above 0 are maxima and all extrema below 0 are minima. Therefore, $x$ has at most one minimum and at most one maximum which are below and above 0, respectively.
\end{proof}

\begin{lem}
\label{takm}
The curvature $x$ is bounded. 
\end{lem}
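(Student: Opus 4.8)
The plan is to show that $x$ stays bounded by exploiting the ODE system together with the already-established monotonicity facts from Lemma~\ref{fyrsta}. Recall we have assumed $A>0$, and we know $x$ has at most one zero, with $x<0$ before it and $x>0$ after it, and that $x$ has at most one minimum and one maximum. So the only way $x$ could fail to be bounded is if $x \to +\infty$ as $s \to +\infty$ (or the symmetric statement $x \to -\infty$ as $s\to -\infty$, which by the symmetry $s \mapsto -(x(-s),y(-s))$ reduces to the same case). So it suffices to rule out $x(s) \to +\infty$ monotonically on an interval $[s_0,\infty)$ on which $x>0$.

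**The key step: controlling $y$.**

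On such an interval, $x' = xy + A$. If $x$ is increasing and large, then $x' > 0$ forces $xy + A > 0$, i.e.\ $y > -A/x$, so $y$ is bounded below (by a quantity tending to $0$). On the other hand $y' = -x^2 - B < 0$, so $y$ is strictly decreasing, hence $y$ converges to some finite limit $y_\infty \ge 0$ as $s \to \infty$; in particular $y$ is bounded above as well, say $y \le y(s_0)$. But then from $y' = -x^2 - B$ and the fact that $y$ is bounded, $\int_{s_0}^\infty x^2\,ds < \infty$, which is incompatible with $x(s) \to \infty$. This is the crux of the argument; the rest is bookkeeping. Alternatively, and perhaps more cleanly, one can use $\tfrac{d}{ds}\sqrt{x^2+y^2} = (Ax - By)/\sqrt{x^2+y^2} \le \sqrt{A^2+B^2}$ from the excerpt: this gives $\sqrt{x^2+y^2} \le \sqrt{x_0^2+y_0^2} + \sqrt{A^2+B^2}\,|s|$, so $x$ can grow at most linearly in $s$. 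Then if $x\to\infty$, we still have $y$ decreasing, and combining $y' = -x^2 - B$ with linear growth of $\sqrt{x^2+y^2}$ — hence at most linear decay allowed for $y$ — forces $x^2$ to be integrable near $+\infty$ against a function that is only linearly large, again a contradiction since $x^2 \to \infty$.

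**Finishing and the main obstacle.**

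To make the contradiction airtight: suppose $x \to +\infty$ as $s\to+\infty$. Since $x$ is eventually increasing (it has at most one maximum, so if it is unbounded it is eventually monotone increasing), pick $s_1$ with $x(s) \ge 1$ for $s \ge s_1$. Then $y' = -x^2 - B \le -1 - B$, so $y(s) \le y(s_1) - (1+B)(s - s_1) \to -\infty$ linearly. But from $x' = xy + A$ and $x \ge 1$, once $y$ becomes very negative we get $x' = xy + A \le y + A < 0$ for $s$ large, contradicting that $x$ is increasing there. Hence $x$ is bounded above; the symmetric argument (via $s \mapsto -(x(-s),y(-s))$, which swaps the roles) shows $x$ is bounded below, completing the proof. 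I expect the main obstacle to be purely organizational: one must be careful that ``$x$ unbounded'' genuinely implies ``$x$ eventually monotone and $\ge 1$'' — this is exactly what Lemma~\ref{fyrsta} delivers, since $x$ has at most one maximum, so beyond that maximum $x$ is monotone, and an unbounded monotone function on $[s_1,\infty)$ eventually exceeds any constant. No delicate estimates are needed; the sign structure of the ODE does all the work.
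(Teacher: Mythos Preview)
Your proof is correct, and your ``finishing'' paragraph is essentially the paper's argument verbatim: assuming $x>0$ and $x'>0$ on $[s_0,\infty)$, one has $y'\le -x^2(s_0)<0$, so $y\to -\infty$ linearly, whence $x'=xy+A$ eventually becomes negative, a contradiction; the bound below follows by the symmetry $s\mapsto -(x(-s),y(-s))$.

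The detour in your ``key step'' paragraph (bounding $y$ below via $y>-A/x$, concluding $y$ has a finite limit, and then deducing $\int x^2<\infty$) is also valid and is a genuinely different route: instead of forcing the \emph{sign} of $x'$ to flip, you force the \emph{integrability} of $x^2$, which is incompatible with $x\to\infty$. This works, but it is slightly less direct than the paper's one-line version, and the intermediate claim $y_\infty\ge 0$ (while true) is not actually needed. The linear-growth remark about $\sqrt{x^2+y^2}$ is correct but superfluous here. Your organizational worry at the end is well placed: the reduction ``$x$ unbounded $\Rightarrow$ $x$ eventually monotone'' is exactly what Lemma~\ref{fyrsta} supplies, and you invoke it correctly.
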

\begin{proof}
Can $x>0$, $x' >0$ hold for all $s \geq s_0$ for some $s_0$? No, since then $y' \leq -x^2(s_0)<0$, so $y$ decreases at least linearly, and hence $x' = xy+A$ will sooner or later reach 0. Similarly, $x>0$, $x'<0$ cannot hold for all $s\leq s_0$, since then $y' \leq -x^2(s_0)<0$, so $y$ increases at least linearly in the backwards direction, and hence $x' = xy+A$ will reach 0 for a small enough $s$. From this and Lemma \ref{fyrsta} we conclude that if $x$ reaches a positive value, then it has to have a global maximum. By symmetry it is clear that if $x$ reaches a negative value, then it has a global minimum.
\end{proof}

\begin{lem}
\label{ylimit}
 $\lim_{s\rightarrow \pm \infty}y = \mp \infty$.
 \end{lem}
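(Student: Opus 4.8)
The plan is to split into the cases $B>0$ and $B=0$, and to obtain the limit as $s\to-\infty$ for free from the one as $s\to+\infty$ using the symmetry $s\mapsto -(x(-s),y(-s))$ of the system noted above.

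First I would observe that $y'=-x^2-B\le -B\le 0$, so $y$ is non-increasing — in fact strictly decreasing, since $y'=0$ would force $x=0$, which by Lemma \ref{fyrsta} can happen at most once. If $B>0$ this already settles the $s\to+\infty$ part: $y'\le -B<0$ makes $y$ decrease at least linearly, so $y(s)\to-\infty$.

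The substantive case is $B=0$, and here I would argue by contradiction. If $y$ does not tend to $-\infty$ as $s\to+\infty$, then, being decreasing, it converges to a finite limit $L$, whence $\int_0^\infty x^2\,ds = y(0)-L<\infty$. By Lemma \ref{fyrsta}, $x$ has at most two extrema and is therefore eventually monotone, and by Lemma \ref{takm} it is bounded; so $x$ converges to a finite limit $c$ as $s\to+\infty$, and since $x\in L^2([0,\infty))$ we must have $c=0$. But then $x'=xy+A\to 0\cdot L+A=A>0$, which is incompatible with $x(s)\to 0$. This contradiction gives $y(s)\to-\infty$ as $s\to+\infty$. Applying this conclusion to the solution $s\mapsto -(x(-s),y(-s))$ of the same system then yields $-y(-s)\to-\infty$, i.e. $y(s)\to+\infty$, as $s\to-\infty$.

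I expect the only real obstacle to be the $B=0$ case: one has to combine the square-integrability of $x$ (forced by the hypothetical finite limit of $y$) with the structural facts about the curvature from Lemmas \ref{fyrsta} and \ref{takm} in order to pin down $x\to 0$, and then extract the contradiction from $x'\to A>0$. Everything else is immediate.
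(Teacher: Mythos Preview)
Your argument is correct and follows the same overall outline as the paper: the $B>0$ case is immediate from $y'\le -B$, the $B=0$ case is handled by contradiction (assume $y\to L$ finite, deduce $x\to 0$, then contradict $x'\to A>0$), and the $s\to-\infty$ statement follows by the symmetry $s\mapsto -(x(-s),y(-s))$.

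The one genuine difference is in how you pin down $x\to 0$ when $B=0$: the paper observes that $y''=-2x^2y-2Ax$ is bounded (using Lemma~\ref{takm} and the assumed boundedness of $y$) and then invokes Barbalat's lemma to get $y'\to 0$, hence $x\to 0$. You instead combine the square-integrability $\int_0^\infty x^2\,ds<\infty$ with the eventual monotonicity of $x$ from Lemma~\ref{fyrsta}. Your route is slightly more self-contained in that it avoids the external Barbalat lemma, at the price of leaning a bit more on the structural information about the extrema of $x$; both approaches are equally short and valid.
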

 \begin{proof}
If $B>0$, then $y'\leq -B <0$, so the statement is obvious. Therefore we look at the case $B=0$.
Since $y' = -x^2 \leq 0$, $y$ is decreasing. Assume $y$ is bounded from below so that $\lim_{s\rightarrow \infty}y$ is finite. Now, $y''=-2x^2y-2Ax$ is bounded, since both $x$ and $y$ are bounded. Hence, by Barbalat's lemma (see \cite{sl}) $\lim_{s\rightarrow \infty}y'=0$. But $y' = -x^2$, so $\lim_{s\rightarrow \infty}x=0$. However, $x' = xy+A$, so $\lim_{s\rightarrow \infty}x'=A$, which contradicts $\lim_{s\rightarrow \infty}x=0$. Hence $\lim_{s\rightarrow \infty}y = -\infty$, and by symmetry, $\lim_{s\rightarrow -\infty}y = \infty$.
\end{proof}

\begin{lem}
\label{rlem}
$\lim_{s\rightarrow \pm \infty}r = \infty$ and $r$ has exactly one extremum, a global minimum.
\end{lem}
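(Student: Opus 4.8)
The plan is to reduce everything to the behaviour of the single function $h:=Ax-By$, because the radial derivative is governed by it. Recall that $\frac{d}{ds}\sqrt{x^2+y^2}=\frac{Ax-By}{\sqrt{x^2+y^2}}$ and that $r=\frac{\sqrt{x^2+y^2}}{\sqrt{A^2+B^2}}$; hence wherever $(x,y)\neq(0,0)$ we have $r'=\frac{h}{\sqrt{A^2+B^2}\,\sqrt{x^2+y^2}}$, so the sign of $r'$ is exactly the sign of $h$.

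First I would dispose of the limit. By Lemma \ref{takm} the curvature $x$ is bounded, while Lemma \ref{ylimit} gives $y\to\mp\infty$ as $s\to\pm\infty$; therefore $\sqrt{x^2+y^2}\to\infty$ and so $\lim_{s\to\pm\infty}r=\infty$. Since $r$ is continuous and nonnegative with $r\to\infty$ at both ends, it attains a global minimum at some $s_*$, and there $h(s_*)=0$: either $r(s_*)>0$, so $r$ is differentiable at $s_*$ with $r'(s_*)=0$, or $r(s_*)=0$, which forces $x(s_*)=y(s_*)=0$ and hence $h(s_*)=0$ directly.

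The heart of the matter is that $h$ has \emph{no other} zero. Differentiating, $h'=Ax'-By'=A(xy+A)+B(x^2+B)=Axy+Bx^2+A^2+B^2$. At any zero $s_0$ of $h$ we have $Ax(s_0)=By(s_0)$, so, using $A>0$ and $B\geq0$, $Ax(s_0)y(s_0)=By(s_0)^2\geq0$ and $Bx(s_0)^2\geq0$, whence $h'(s_0)\geq A^2+B^2>0$. A continuous function that is strictly increasing at each of its zeros has at most one zero: otherwise there would be two consecutive zeros $s_1<s_2$, with $h>0$ just to the right of $s_1$ — hence, being of constant sign on $(s_1,s_2)$, throughout $(s_1,s_2)$ — yet $h<0$ just to the left of $s_2$, a contradiction. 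Thus $h$ vanishes exactly once, at $s_*$, with $h<0$ on $(-\infty,s_*)$ and $h>0$ on $(s_*,\infty)$. Moreover $(x,y)$ can vanish only at $s_*$ (since $(x,y)=(0,0)$ implies $h=0$), so away from $s_*$ the identity $r'=\frac{h}{\sqrt{A^2+B^2}\,\sqrt{x^2+y^2}}$ applies and shows $r$ is strictly decreasing on $(-\infty,s_*)$ and strictly increasing on $(s_*,\infty)$. This yields exactly one extremum, the global minimum at $s_*$, completing the proof.

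I expect the only real subtlety to be the bookkeeping in the degenerate case where the curve passes through the origin, i.e.\ $r(s_*)=0$ and $r$ has a corner rather than being smooth at $s_*$; the argument above is arranged so that this case needs no separate treatment, since only the one-sided monotonicity of $r$ is used. (As a sanity check, in the subcase $B=0$ one has $h=Ax$, and Lemma \ref{fyrsta} already controls the sign of $x$, while $x$ cannot be single-signed without making $\sqrt{x^2+y^2}$ monotone, which is impossible as $r\to\infty$ at both ends.)
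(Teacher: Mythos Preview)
Your proof is correct and is essentially the paper's own argument in disguise: since $x+iy=(A-iB)(\tau+i\nu)$, your function $h=Ax-By$ equals $(A^2+B^2)\tau$, and the paper simply observes $\frac{d}{ds}r^2=2\tau$ together with $\tau'(s)=1+B\nu(s)^2>0$ whenever $\tau(s)=0$. Working with $r^2$ rather than $r$ makes the differentiability bookkeeping at a possible origin-crossing disappear, and the $(\tau,\nu)$ coordinates make the derivative computation a one-liner, but the underlying idea---the radial derivative is controlled by a function that is strictly increasing through each of its zeros---is identical.
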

\begin{proof}
The first statement follows from Lemma \ref{ylimit}, since $r^2(A^2+B^2) = x^2+y^2$. To see the second one, note that $\frac{d}{ds}r^2 = 2\tau$, and if $\tau(s) = 0$, then $\tau'(s) = B\nu^2(s)+1 > 0$.
\end{proof}
This lemma implies that the curve $X$ has exactly one point closest to the origin. The curve consists of two arms coming out from this point, where each arm is strictly going away from the origin to infinity. Hence, each of these arms is properly embedded. Since we already know that $X$ doesn't have double points (because $B\geq 0$) we have the following interesting result.
\begin{cor} 
The curve $X$ is properly embedded.
\end{cor}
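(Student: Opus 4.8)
The plan is essentially to assemble the facts already established. Recall that we are looking at the curves $X$ satisfying \eqref{jafnan} with $A\neq 0$ and $B\geq 0$. Lemma~\ref{rlem} tells us that $r=|X|$ attains its minimum at a single parameter $s_0$, is strictly decreasing on $(-\infty,s_0]$ and strictly increasing on $[s_0,\infty)$, and satisfies $r(s)\to\infty$ as $s\to\pm\infty$; these two monotone half-branches are the ``two arms.'' From the discussion at the end of Section~\ref{allirfundnir} we also know that, because $B\geq 0$, the curve $X$ has no double points: any self-intersection would be transversal, hence would bound a loop whose enclosed area is strictly decreasing by \eqref{flatarmal}, which is impossible for a non-shrinking curve.

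First I would check that $X$ restricted to each arm is injective: on each of the intervals $(-\infty,s_0]$ and $[s_0,\infty)$ the function $r$ is strictly monotone (with its minimum value attained only at $s_0$), so $X(s_1)=X(s_2)$ with $s_1,s_2$ in the same interval forces $r(s_1)=r(s_2)$ and hence $s_1=s_2$. Each arm is moreover proper since $r\to\infty$ along it, so each arm is a properly embedded half-line. Next I would rule out intersections between the two arms: such a point would be a self-intersection of $X$ at parameters $s_1<s_0<s_2$, i.e. a double point, which we have just excluded. Therefore $X$ is globally injective. Combined with the fact that $X$ is an arc-length immersion and that $X(s)\to\infty$ as $s\to\pm\infty$ (so $X:\mathbf R\to\mathbf C$ is proper, hence a closed map), $X$ is a homeomorphism onto its image and thus an embedding — equivalently, the curve is the union of its two properly embedded arms, glued only at the point $s_0$ closest to the origin.

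As for difficulty: there is no real obstacle here, since the statement is a bookkeeping consequence of Lemma~\ref{rlem} and the area estimate \eqref{flatarmal}. The one point that deserves a sentence of care is the passage from ``$X$ has no self-intersections'' to the already-known ``$X$ has no transversal double points'': a tangential self-intersection is ruled out by uniqueness for the system $x'=F(x,y)$, $y'=G(x,y)$ — equivalently, the curve is determined by its position and tangent at one point — which would force $X$ either to be periodic or to retrace itself, and both alternatives contradict $r(s)\to\infty$ (the former makes $r$ bounded, the latter forces the unit tangent to vanish at the midpoint of $s_1$ and $s_2$).
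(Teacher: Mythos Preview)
Your argument is correct and is essentially the paper's own proof: the paper deduces the corollary in one sentence from Lemma~\ref{rlem} (each arm is embedded since $r$ is strictly monotone along it) together with the observation at the end of Section~\ref{allirfundnir} that $B\geq 0$ rules out any double point via the area estimate~\eqref{flatarmal}, tangential self-intersections having already been excluded by ODE uniqueness. You have simply written out the same reasoning with more topological care (properness, homeomorphism onto the image).
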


Our next goal is to show that the curvature $x$ goes to 0 along each arm.

\begin{lem}
\label{rod}
 The curvature $x$ has a negative global minimum, a positive global maximum and no other extrema.
\end{lem}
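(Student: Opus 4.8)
The plan is to reduce the whole statement to a single fact: the curvature $x$ cannot keep a constant sign on all of $\mathbf R$. Granting that, here is why we are done. Lemma~\ref{fyrsta} already tells us that $x$ has at most one local minimum (which must lie below $0$), at most one local maximum (which must lie above $0$), and at most one zero, with $x<0$ before that zero and $x>0$ after it. So if $x$ has a zero it attains both signs, and the argument in the proof of Lemma~\ref{takm} then hands us a negative global minimum and a positive global maximum, with no other extrema by Lemma~\ref{fyrsta} — which is exactly the claim. If $x$ has no zero it is, being continuous, everywhere positive or everywhere negative; and since $s\mapsto-(x(-s),y(-s))$ solves the same system, it suffices to rule out $x>0$ on all of $\mathbf R$. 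So the real content is: $x>0$ everywhere is impossible.

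To prove that, I would assume $x>0$ on all of $\mathbf R$ and seek a contradiction. By Lemma~\ref{rlem}, $r$ has exactly one extremum, a global minimum, say at $s=s_*$; note $r>0$ everywhere, since $r(s)=0$ would force $\tau(s)=\nu(s)=0$ and hence $x(s)=A\tau(s)+B\nu(s)=0$. Exactly as in the proof of Lemma~\ref{rlem} (using $\frac{d}{ds}r^2=2\tau$ and the fact that $\tau'=1+B\nu^2>0$ at any zero of $\tau$, since $x=B\nu$ there), one gets $\tau<0$ on $(-\infty,s_*)$ and $\tau>0$ on $(s_*,\infty)$.

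Now I would restrict attention to the half-line $(-\infty,s_*)$, where $\tau<0$. From $x=A\tau+B\nu$ and $A>0$ we get $B\nu=x-A\tau>0$ there, which already forces $B>0$ (ruling out the degenerate case $B=0$) and $\nu>0$ on $(-\infty,s_*)$. On the other hand $\nu'=-x\tau>0$ on $(-\infty,s_*)$, so $\nu$ is increasing on that half-line, hence $\nu<\nu(s_*)$ there; thus $\nu$ is trapped in $(0,\nu(s_*))$ for all $s<s_*$. But $r\to\infty$ as $s\to-\infty$ by Lemma~\ref{rlem}, and $r^2=\tau^2+\nu^2$ with $\nu$ bounded on $(-\infty,s_*)$, so necessarily $\tau^2\to\infty$, i.e.\ $\tau\to-\infty$; then $x=A\tau+B\nu\le A\tau+B\nu(s_*)\to-\infty$, contradicting $x>0$. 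This contradiction, combined with the reduction in the first paragraph, proves the lemma.

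The step I expect to be the main obstacle — or at least the least obvious one — is the decision to stop working in the $(x,y)$ phase plane: there the hypothetical $x>0$ trajectory looks entirely plausible (it would come in from near $x=0$, $y=+\infty$, spiral, and leave near $x=0$, $y=-\infty$, hugging the nullcline $xy=-A$), and a direct attack drowns in asymptotic estimates. The trick is to pass instead to $\nu=\langle X,N\rangle$, because the sign of $x$ pins down the sign of $\nu'=-x\tau$ on each side of the point closest to the origin, and the resulting monotonicity (hence boundedness) of $\nu$ is precisely what cannot coexist with $r\to\infty$. Everything else, including the elimination of $B=0$, falls out of the single identity $B\nu=x-A\tau$.
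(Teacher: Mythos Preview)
Your argument is correct, but it is considerably more elaborate than the paper's. The paper stays in the $(x,y)$ plane and uses Lemma~\ref{ylimit} directly: if $x<0$ for all $s$, then for $s$ large enough $y<0$, so $xy>0$ and $x'=xy+A\geq A>0$, forcing $x$ up to $0$; the case $x>0$ everywhere is dismissed by symmetry (equivalently, for $s\to-\infty$ one has $y>0$, hence again $x'\geq A$, so $x$ must have risen from below $0$). That is the whole proof.

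Your route --- pass to $(\tau,\nu)$, use Lemma~\ref{rlem} to get $\tau<0$ on one arm, deduce $B\nu>0$ there, then trap $\nu$ between $0$ and $\nu(s_*)$ and let $\tau\to-\infty$ drag $x$ down --- is sound, and the handling of $B=0$ via $B\nu=x-A\tau>0$ is clean. But your closing remark that in the $(x,y)$ plane ``a direct attack drowns in asymptotic estimates'' is the one misjudgment: the direct attack is two lines, because the sign of $xy$ is forced once $y$ has the right sign, and Lemma~\ref{ylimit} hands you that. What your approach buys is a more geometric picture (you are really saying the normal component $\nu$ cannot stay bounded while $r\to\infty$ on an arm where the curvature has a fixed sign), at the cost of a longer argument and an extra case split on $B$.
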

\begin{proof}
If $x<0$ for all $s$, then $x' = xy+A \geq A>0$ for all $s$ large enough (by Lemma \ref{ylimit}), and hence $x$ will eventually attain the value 0, a contradiction.
Similarly, $x>0$ can't hold for all $s$. Therefore, $x$ takes both negative and positive values, and the argument from the proof of  Lemma \ref{takm} shows that it has a global minimum and a global maximum. By Lemma \ref{fyrsta}, these are the only extrema.
\end{proof}

\begin{lem}
\label{etalimit}
$\lim_{s\rightarrow \pm \infty}\nu = \mp \infty$.
\end{lem}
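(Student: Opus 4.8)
The plan is to get this essentially for free from the two lemmas already established, by expressing $\nu$ algebraically in terms of the bounded function $x$ and the function $y$ whose limits we already know. Recall that $x+iy = (A-iB)(\tau+i\nu)$, so dividing through gives
\begin{equation*}
\tau + i\nu = \frac{x+iy}{A-iB} = \frac{(x+iy)(A+iB)}{A^2+B^2},
\end{equation*}
and reading off the imaginary part yields the identity $\nu = \dfrac{Bx+Ay}{A^2+B^2}$.

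Now I would invoke Lemma~\ref{takm}, which tells us that $x$ is bounded, so the term $Bx$ is bounded; and Lemma~\ref{ylimit}, which tells us $\lim_{s\to\pm\infty} y = \mp\infty$. Since we have arranged $A>0$, the term $Ay$ dominates: as $s\to +\infty$ the numerator $Bx+Ay$ is a bounded quantity plus something tending to $-\infty$, hence tends to $-\infty$, so $\nu\to -\infty$; as $s\to -\infty$ the same reasoning with $y\to +\infty$ gives $\nu\to +\infty$. This is exactly the claimed statement $\lim_{s\to\pm\infty}\nu = \mp\infty$.

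There is no real obstacle here: the only things to be careful about are inverting the complex relation correctly (so that the coefficient of $y$ in $\nu$ is $+A$, not something with the wrong sign) and remembering that the normalization $A>0$ is in force, which is what pins down the direction of the limit. An alternative route would be to argue directly from the ODE $\nu' = -A\tau^2 - B\tau\nu$ together with monotonicity/boundedness considerations, but the algebraic identity above makes that unnecessary, so I would not pursue it.
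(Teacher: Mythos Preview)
Your proposal is correct and is exactly the paper's own argument: the paper simply writes ``Follows from Lemmas~\ref{takm} and~\ref{ylimit} since $\nu = \frac{Bx+Ay}{A^2+B^2}$,'' which is precisely the algebraic identity plus bounded-$x$/diverging-$y$ reasoning you spell out. Your derivation of the identity and your check of the sign via the normalization $A>0$ are both fine.
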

\begin{proof}
Follows from Lemmas \ref{takm} and \ref{ylimit} since $\nu = \frac{Bx+Ay}{A^2+B^2}$.
\end{proof}

\begin{lem} $\lim_{s\rightarrow \pm \infty}x = 0$.
\label{xlimit}
\end{lem}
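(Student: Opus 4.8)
The plan is to first show that the curvature $x$ converges as $s\to+\infty$, and then to rule out a nonzero limit. By the symmetry $s\mapsto-(x(-s),y(-s))$ of the system, it suffices to treat the limit $s\to+\infty$; applying the result to this reflected solution yields the statement at $-\infty$.

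For the convergence step, I would use Lemma \ref{rod}: the curvature $x$ has exactly two extrema (a negative global minimum and a positive global maximum), both attained at finite parameter values, so there is an $s_1$ such that $x'$ does not vanish on $(s_1,\infty)$ and hence $x$ is strictly monotone there. Combined with the boundedness from Lemma \ref{takm}, this shows that $L:=\lim_{s\to+\infty}x$ exists and is finite.

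Next I would argue $L=0$ by contradiction. Suppose $L\neq0$ and choose $s_2\geq s_1$ so that for all $s\geq s_2$ the value $x(s)$ has the same sign as $L$, with $|x(s)|\geq|L|/2$, and moreover $y(s)<0$ (possible since $y(s)\to-\infty$ by Lemma \ref{ylimit}). If $L>0$, then for $s\geq s_2$ we have $x(s)y(s)\leq\tfrac{L}{2}y(s)$, so $x'(s)=x(s)y(s)+A\leq\tfrac{L}{2}y(s)+A\to-\infty$, which forces $x(s)\to-\infty$ and contradicts Lemma \ref{takm}. If $L<0$, then multiplying $x(s)\leq L/2<0$ by $y(s)<0$ gives $x(s)y(s)\geq\tfrac{L}{2}y(s)$, so $x'(s)\geq\tfrac{L}{2}y(s)+A\to+\infty$, which forces $x(s)\to+\infty$, again a contradiction. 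Hence $L=0$, and the symmetry above gives $\lim_{s\to-\infty}x=0$ as well.

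I do not expect a serious obstacle here: the only point needing care is the passage from boundedness to an actual limit, and that is handed to us for free by the description of the critical points of $x$ in Lemma \ref{rod}. (A Barbalat-type argument of the sort used in the proof of Lemma \ref{ylimit} is not directly available, since $x''=xy^2+Ay-x^3-Bx$ fails to be bounded once $y\to-\infty$, which is why I would route the convergence through monotonicity instead.)
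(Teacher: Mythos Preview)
Your proof is correct and follows essentially the same strategy as the paper: deduce from Lemma~\ref{rod} that $x$ is eventually monotone and hence has a finite limit, then rule out a nonzero limit by showing a derivative is forced to $\pm\infty$. The only cosmetic difference is that the paper runs the contradiction through the $(\tau,\nu)$ variables (via Lemma~\ref{etalimit} and $\tau'=1+x\nu$) rather than directly through $x'=xy+A$; your version is a touch more direct, and your separate treatment of $L<0$ is harmless though actually unnecessary, since Lemma~\ref{fyrsta} already forces $x>0$ for large $s$.
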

\begin{proof}
By Lemma \ref{rod}, we know that $x$ has a finite limit in each direction.
If the limit of $x$ when $s \rightarrow \infty$ is not 0, then $x>\delta>0$ for all $s\geq s_0$. By Lemma \ref{etalimit}, we can pick $s_0$ such that $\nu <-\frac{2}{\delta}$ for all $s\geq s_0$. But then $\tau' = 1+x\nu < 1-x\frac{2}{\delta} < -1$ for $s\geq s_0$, so $\lim_{s\rightarrow \infty}\tau = -\infty$. But $x = A\tau + B\nu$, so this yields $\lim_{s\rightarrow \infty}x = -\infty$, a contradiction. The other case follows by symmetry.
\end{proof}
Now we can determine the behaviour of the two arms far away from the origin. First let us state a basic limit result.

\begin{lem}
\label{markgildi}
$\lim_{s\rightarrow \pm \infty}\frac{\tau}{\nu} = -\frac{B}{A}$,\\
$\lim_{s\rightarrow \pm \infty}\frac{\tau}{r} = \pm\frac{B}{\sqrt{A^2+B^2}}$, \\
$\lim_{s\rightarrow \pm \infty}\frac{\nu}{r} = \pm\frac{-A}{\sqrt{A^2+B^2}}$.
\end{lem}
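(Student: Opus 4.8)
The plan is to leverage the limits already established — namely $\lim_{s\to\pm\infty} x = 0$ from Lemma~\ref{xlimit}, $\lim_{s\to\pm\infty} y = \mp\infty$ from Lemma~\ref{ylimit}, and $\lim_{s\to\pm\infty}\nu = \mp\infty$ from Lemma~\ref{etalimit} — together with the linear relations $x = A\tau + B\nu$ and $y = -B\tau + A\nu$ (equivalently $\tau = \frac{Ax - By}{A^2+B^2}$ and $\nu = \frac{Bx + Ay}{A^2+B^2}$). From $x = A\tau + B\nu$ and $x \to 0$ we get $A\tau + B\nu \to 0$, hence $\tau/\nu \to -B/A$ in each direction (using $|\nu|\to\infty$ so the division is legitimate for large $|s|$). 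This gives the first limit immediately.

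For the second and third limits, I would write $r = \sqrt{\tau^2 + \nu^2} = |\nu|\sqrt{1 + (\tau/\nu)^2}$, so that $\tau/r = \operatorname{sgn}(\nu)\cdot (\tau/\nu)/\sqrt{1+(\tau/\nu)^2}$ and $\nu/r = \operatorname{sgn}(\nu)/\sqrt{1+(\tau/\nu)^2}$. Since $\tau/\nu \to -B/A$, we have $\sqrt{1 + (\tau/\nu)^2} \to \sqrt{1 + B^2/A^2} = \sqrt{A^2+B^2}/|A|$. As $s\to+\infty$, Lemma~\ref{etalimit} gives $\nu\to-\infty$, so $\operatorname{sgn}(\nu) = -1$; plugging in,
\[
\frac{\nu}{r} \to \frac{-1}{\sqrt{A^2+B^2}/|A|} = \frac{-|A|}{\sqrt{A^2+B^2}} = \frac{-A}{\sqrt{A^2+B^2}},
\]
where the last equality uses $A > 0$ (we reduced to this case by reflection). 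Similarly $\tau/r \to (-1)\cdot(-B/A)/(\sqrt{A^2+B^2}/|A|) = B/\sqrt{A^2+B^2}$, recalling $A>0$. As $s\to-\infty$, $\nu\to+\infty$ so $\operatorname{sgn}(\nu)=+1$ and all signs flip, giving the $\pm$ patterns in the statement.

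There is really no serious obstacle here; the lemma is a bookkeeping consequence of earlier results. The only points that require a little care are: (i) justifying the sign of $\nu$ in each limit direction, which is exactly Lemma~\ref{etalimit}; (ii) making sure the reduction $A > 0$ (made at the start of the section "by reflecting the curve if necessary") is invoked so that $|A| = A$ and the formulas come out with $A$ rather than $|A|$; and (iii) noting that $\tau$ itself also tends to $\pm\infty$ (since $\tau/\nu \to -B/A$ and $|\nu|\to\infty$, with $-B/A \le 0$), which makes $\tau/r$ a genuine $\infty/\infty$ ratio but one controlled by the computation above. I would present this as a short three-line computation rather than spelling out every case.
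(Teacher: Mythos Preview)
Your proposal is correct and follows essentially the same route as the paper: the paper's one-line proof simply says the result follows from Lemmas~\ref{ylimit} and~\ref{xlimit} via the relations $\tau = \frac{Ax-By}{A^2+B^2}$ and $\nu = \frac{Bx+Ay}{A^2+B^2}$, and you have just unpacked that computation in detail. One minor quibble: your aside (iii) that $\tau\to\pm\infty$ fails when $B=0$ (then $\tau = x/A \to 0$), but this is irrelevant to the actual argument, which handles $\tau/r$ correctly regardless.
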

\begin{proof}
Follows from Lemmas  \ref{ylimit} and \ref{xlimit} since $\tau = \frac{Ax-By}{A^2+B^2}$ and $\nu = \frac{Bx+Ay}{A^2+B^2}$.
\end{proof}

By Equation \eqref{EnneinXjafnan} and the previous lemma, we immediately get the following corollary which shows the limiting growing direction of the arms.
\begin{cor}
\label{stefna}
$\lim_{s\rightarrow \pm \infty}\frac{rT}{X} = \pm \frac{B+iA}{\sqrt{A^2+B^2}}$.
\end{cor}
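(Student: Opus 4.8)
\emph{Proof proposal.} The plan is to reduce the claimed complex limit to the two real ratios $\tau/r$ and $\nu/r$, whose limits have already been computed in Lemma \ref{markgildi}. Starting from equation \eqref{EnneinXjafnan} we have $X = e^{i\theta}(\tau+i\nu)$, and since the curve is parametrized by arc length with $T = e^{i\theta}$, the phase cancels:
\[
\frac{rT}{X} = \frac{r\,e^{i\theta}}{e^{i\theta}(\tau+i\nu)} = \frac{r}{\tau+i\nu}.
\]
This is the only step that uses the arc length parametrization.

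Next I would rationalize the denominator. Multiplying numerator and denominator by $\tau - i\nu$ and using $r^2 = \tau^2 + \nu^2$ (valid since $r>0$ everywhere by Lemma \ref{rlem}) gives
\[
\frac{rT}{X} = \frac{r(\tau - i\nu)}{\tau^2+\nu^2} = \frac{\tau - i\nu}{r} = \frac{\tau}{r} - i\,\frac{\nu}{r}.
\]
Now letting $s \to \pm\infty$ and invoking Lemma \ref{markgildi}, namely $\tau/r \to \pm B/\sqrt{A^2+B^2}$ and $\nu/r \to \mp A/\sqrt{A^2+B^2}$, one obtains
\[
\frac{rT}{X} \;\longrightarrow\; \pm\frac{B}{\sqrt{A^2+B^2}} \;+\; i\,\Bigl(\pm\frac{A}{\sqrt{A^2+B^2}}\Bigr) \;=\; \pm\frac{B+iA}{\sqrt{A^2+B^2}},
\]
which is the asserted identity.

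There is essentially no obstacle here: the corollary is a purely algebraic consequence of \eqref{EnneinXjafnan} and Lemma \ref{markgildi}. The only points demanding a little care are keeping the two occurrences of $\pm$ consistent when separating real and imaginary parts, and observing (as just noted) that $r$ never vanishes so that all the divisions are legitimate.
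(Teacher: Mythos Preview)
Your argument is correct and is exactly what the paper has in mind: it states that the corollary follows immediately from equation \eqref{EnneinXjafnan} and Lemma \ref{markgildi}, and your computation $rT/X = r/(\tau+i\nu) = \tau/r - i\,\nu/r$ is precisely the missing line. One small quibble: Lemma \ref{rlem} does not guarantee $r>0$ \emph{everywhere} (the global minimum could in principle be zero), but since you only need $r>0$ for all sufficiently large $|s|$, which does follow from $r\to\infty$, this does not affect the limit.
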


\begin{lem}
\label{philimit}
$\lim_{s\rightarrow \pm \infty}\phi = +\infty$.
\end{lem}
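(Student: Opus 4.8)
The plan is to differentiate $\phi$ directly. Since $r=|X|\to\infty$ by Lemma~\ref{rlem}, the curve avoids the origin for $|s|$ large, so there $\phi$ --- which, up to the fixed constant $\arg(A+iB)$ in its definition, is the continuous argument of $X$ --- is smooth with $\phi'=\operatorname{Im}\frac{X'}{X}$. Using $X=e^{i\theta}(\tau+i\nu)$ together with the identity $X'=e^{i\theta}$ established above, we get $\frac{X'}{X}=\frac{1}{\tau+i\nu}$, hence
\begin{equation*}
\phi'=\operatorname{Im}\frac{1}{\tau+i\nu}=\frac{-\nu}{\tau^{2}+\nu^{2}}=\frac{-\nu}{r^{2}}.
\end{equation*}
(The same follows from $\phi=\theta+\arg(\tau+i\nu)+\mathrm{const}$ upon using $\theta'=x$ and the ODEs $\tau'=1+x\nu$, $\nu'=-x\tau$.)

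Next I would bound the growth of $r$. From $r^{2}=\tau^{2}+\nu^{2}$ together with the same ODEs one finds $\frac{d}{ds}r^{2}=2\tau$, i.e.\ $r'=\tau/r$. By Lemma~\ref{markgildi} this quantity has finite limits as $s\to\pm\infty$, so it is bounded on $\mathbf R$, say $|r'|\le M$; hence $r$ is $M$-Lipschitz and $r(s)\le r(0)+M|s|$ for all $s$.

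Combining the two facts finishes the $s\to+\infty$ direction. In this section $A>0$, so by Lemma~\ref{markgildi} we have $-\nu/r\to \frac{A}{\sqrt{A^{2}+B^{2}}}>0$ as $s\to+\infty$; pick $s_{0}\ge 0$ and $c_{0}>0$ with $-\nu/r\ge c_{0}$ for $s\ge s_{0}$. Then for $s\ge s_{0}$,
\begin{equation*}
\phi'(s)=\frac{-\nu/r}{r}\ \ge\ \frac{c_{0}}{\,r(0)+Ms\,},
\end{equation*}
and integrating from $s_{0}$ gives $\phi(s)\ge\phi(s_{0})+\frac{c_{0}}{M}\log\frac{r(0)+Ms}{r(0)+Ms_{0}}\to+\infty$. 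For $s\to-\infty$ I would invoke the symmetry $s\mapsto-(x(-s),y(-s))$ recalled earlier: it sends the solution to one with the same $A,B$, replaces $X(s)$ by $-X(-s)$ and hence $\phi(s)$ by $\phi(-s)$ plus a constant, so the case just proved applies. (Equivalently, one can rerun the estimate directly: as $s\to-\infty$, Lemmas~\ref{etalimit} and~\ref{markgildi} give $-\nu/r\to-\tfrac{A}{\sqrt{A^{2}+B^{2}}}<0$, so $\phi'(s)\le-c_{0}/(r(0)+M|s|)$ for $s$ very negative, whence $\phi(s)\to+\infty$.)

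I do not expect a genuine obstacle: the entire content is the identity $\phi'=-\nu/r^{2}$ together with the at-most-linear growth of $r$, both immediate from the ODEs and Lemma~\ref{markgildi}. The only thing to watch is the branch bookkeeping for $\arg$ in the definition of $\phi$, which is harmless once attention is restricted to $|s|$ large, where $r>0$ and $\phi=\arg X+\mathrm{const}$.
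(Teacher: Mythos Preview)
Your proof is correct. Both your argument and the paper's hinge on the identity $\phi'=-\nu/r^{2}$ together with the asymptotics of $\tau,\nu$ from Lemma~\ref{markgildi}, so the two are close cousins. The paper packages the divergence more compactly by changing the independent variable to $\log r$: since $(\log r)'=\tau/r^{2}$, one obtains
\[
\frac{d\phi}{d\log r}=-\frac{\nu}{\tau},
\]
which tends to $A/B$ (or to $+\infty$ when $B=0$) by Lemma~\ref{markgildi}; the conclusion then follows at once from $r\to\infty$ (Lemma~\ref{rlem}), with no need for a growth estimate on $r$ or an explicit integration in $s$. Your route---bounding $r$ linearly via $|r'|=|\tau|/r\le M$ and integrating $\phi'$ directly---reaches the same destination and is perhaps more self-contained, at the expense of a few extra lines.
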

\begin{proof}
Easy calculations give $\frac{d\phi}{d \log(r)} = -\frac{\nu}{\tau}$ which goes to $\frac{A}{B}$ (or $+\infty$ when $B=0$) as $s \rightarrow \pm\infty$. Hence the result follows from Lemma \ref{rlem}.
\end{proof}

The above results show that each arm spirals infinitely many circles out from the origin and that its limiting growing direction is $B+iA$ times the location. Since $\phi = \theta + \arg(\tau+i\nu)$ and we just saw that $\arg(\tau+i\nu)$ has finite limits as $s \rightarrow \pm\infty$, we get the following corollary.

\begin{cor}
\label{snyst}
$\lim_{s\rightarrow \pm \infty}\theta = +\infty$. \\In other words
  $\int_{s_0}^\infty k\hspace{0.1cm} ds = +\infty$ and   $\int_{-\infty}^{s_0} k\hspace{0.1cm} ds = -\infty$, so each arm has infinite total curvature.
\end{cor}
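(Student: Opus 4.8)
The strategy is to read off the behaviour of $\theta$ from that of $\phi$, which we have already pinned down. Recall from \eqref{EnneinXjafnan} that $X = e^{i\theta}(\tau+i\nu) = e^{i\phi}r$, hence $\tau+i\nu = e^{i(\phi-\theta)}r$; in other words the \emph{continuous} function $\phi-\theta$ is a continuous branch of the argument of $(\tau+i\nu)/r$. Since Lemma \ref{philimit} already gives $\phi\to+\infty$ in both directions, it will be enough to show that $\phi-\theta$ converges to a finite limit as $s\to\pm\infty$; then $\theta = \phi-(\phi-\theta)\to+\infty$ as well.

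To prove that $\phi-\theta$ converges, I would combine Lemma \ref{markgildi} with Lemma \ref{rlem}. The former gives $\tau/r\to\pm B/\sqrt{A^2+B^2}$ and $\nu/r\to\mp A/\sqrt{A^2+B^2}$ as $s\to\pm\infty$, so
\[
\frac{\tau+i\nu}{r}\ \longrightarrow\ \pm\,\frac{B-iA}{\sqrt{A^2+B^2}}\qquad (s\to\pm\infty),
\]
a unit complex number, in particular nonzero. Since $r\to\infty$ by Lemma \ref{rlem}, for $|s|$ large we have $r>0$ and the continuous curve $s\mapsto(\tau+i\nu)/r$ stays in a small neighbourhood of a fixed nonzero point; its continuous argument $\phi-\theta$ therefore has a finite limit in each direction (equal, modulo $2\pi$ and depending on the chosen branch, to $\arg(B-iA)$ as $s\to+\infty$ and to that plus $\pi$ as $s\to-\infty$).

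Putting the two steps together, $\theta(s)\to+\infty$ as $s\to+\infty$ and as $s\to-\infty$. Finally, since $\theta_0=0$ and the curvature is $k=\theta'=x$, this is exactly $\int_{s_0}^\infty k\,ds = \lim_{s\to\infty}\theta - \theta(s_0) = +\infty$ and $\int_{-\infty}^{s_0}k\,ds = \theta(s_0) - \lim_{s\to-\infty}\theta = -\infty$; since $x$ is eventually of one sign on each arm (Lemma \ref{fyrsta}), this also yields $\int|k|\,ds=\infty$ on each arm.

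The only genuinely delicate point is going from the intuitive statement ``$\arg(\tau+i\nu)$ has finite limits'' to a rigorous one; writing $\tau+i\nu=e^{i(\phi-\theta)}r$ and using that $r$ is eventually positive turns this into a statement about the honest continuous function $\phi-\theta$, after which everything is a direct substitution into results already in hand. I therefore do not anticipate any real obstacle.
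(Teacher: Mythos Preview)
Your argument is correct and is essentially the paper's own proof: the paper simply notes that $\phi=\theta+\arg(\tau+i\nu)$, that $\arg(\tau+i\nu)$ has finite limits by Lemma~\ref{markgildi}, and that $\phi\to+\infty$ by Lemma~\ref{philimit}. You have merely made the ``continuous branch of the argument'' point explicit, which is a welcome but minor elaboration.
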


This result concludes the proof of the theorem. Note that in the purely rotational case ($B=0$) simple calculations yield that the function $r^2-\frac{2}{A}\theta$ is constant, which is an interesting fact. This also gives another proof of the fact that the curve can't cross itself.

\section{Curves which rotate and shrink}

For each $A\neq0$ and $B< 0$, we have the following description of the curves satisfying Equation \eqref{jafnan}. We omit the circle of radius $\frac{1}{\sqrt{-B}}$, where the rotation is vacuous.

\begin{thm}
\label{snuainn}
In this case there are two types of curves.

1) Curves such that the limiting behaviour when going along the curve in each direction is wrapping around the circle of radius  $\frac{1}{\sqrt{-B}}$, clockwise if $A<0$ and counterclockwise if $A>0$.  These curves form a one-dimensional family.

2) Curves such that the curvature never changes sign and the two ends behave very differently. One end wraps around the circle with radius $\frac{1}{\sqrt{-B}}$ in its limiting behaviour, clockwise if $A<0$ and counterclockwise if $A>0$. The other end spirals infinitely many circles around the origin out to infinity and has infinite total curvature. The curvature goes to $0$ along it, and its limiting growing direction is $-B-iA$ times the location. There is at least one curve of this type, and we call it the comet spiral.

These curves rotate and shrink with angular function $\frac{A}{2B}\log(2Bt+1)$ and scaling function $\sqrt{2Bt+1}$ under the CSF. A singularity forms at time $t=-\frac{1}{2B}$. The curves of type $1$ are bounded, so they disappear into the origin. 
\end{thm}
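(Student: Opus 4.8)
\emph{Proof strategy.} The plan is to read off the classification from the phase portrait of the system $x'=xy+A$, $y'=-x^{2}-B$. After reflecting the curve we may take $A>0$; write $\beta=\sqrt{-B}>0$. The system has exactly the two stationary points $P_{\pm}=(\pm\beta,\mp A/\beta)$, and a short computation shows both correspond to the circle of radius $1/\sqrt{-B}$: at $P_{\pm}$ one gets $\tau=0$, $\nu=\mp1/\beta$, hence $r=1/\sqrt{-B}$, while $\theta'=x\to\pm\beta$ and $\phi'=-\nu/r^{2}\to\pm\beta$, so an orbit converging to $P_{+}$ as $s\to+\infty$ (or to $P_{-}$ as $s\to-\infty$) produces an end of the curve that winds infinitely many times onto that circle, counterclockwise since $A>0$ (clockwise after the reflection $A\mapsto-A$). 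Linearizing, the Jacobian at $P_{+}$ has trace $-A/\beta<0$ and determinant $-2B>0$, so $P_{+}$ is a sink --- a stable node if $A^{2}\ge8B^{2}$, a stable focus if $A^{2}<8B^{2}$ --- and by the symmetry $s\mapsto-s$, $(x,y)\mapsto-(x,y)$, which interchanges $P_{\pm}$, the point $P_{-}$ is a source.

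Two observations then organize everything. First, there are \emph{no periodic orbits}: Dulac's criterion with the weight $\mu(x,y)=1/x$ yields $\partial_{x}(\mu x')+\partial_{y}(\mu y')=-A/x^{2}<0$ on each half-plane $\{x>0\}$ and $\{x<0\}$, and no closed orbit can cross $\{x=0\}$ since $x'=A>0$ there. Second, the curvature $x$ has \emph{at most one zero}, an upward crossing ($x(s)=0$ implies $x'(s)=A>0$); hence, up to the reflection symmetry, either $x>0$ everywhere or $x$ changes sign exactly once, and $\{x>0\}$ is forward-invariant while $\{x<0\}$ is backward-invariant. Next, by arguments close to those of Section~\ref{primero} --- though, since the curvature may now oscillate, Lemma~\ref{fyrsta} must be replaced by a finer discussion of the nullclines $y=-A/x$ and $x=\pm\beta$ --- one shows $x$ is bounded and that each end of an orbit is of exactly one of two types. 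A \emph{bounded end}: Poincar\'e--Bendixson together with the absence of periodic orbits forces $(x,y)$ to converge to $P_{+}$ or $P_{-}$ (the sign of $x$ deciding which), so the corresponding end of the curve winds infinitely onto the circle of radius $1/\sqrt{-B}$. An \emph{unbounded end}: then $x\to0$, $|y|\to\infty$, $r\to\infty$, the end has infinite total curvature and winds infinitely around the origin, and its limiting growing direction is proportional to $(-B-iA)$ times the location --- this is the analogue of Lemmas~\ref{ylimit}, \ref{xlimit}, \ref{etalimit}, \ref{markgildi}, \ref{philimit} and Corollaries~\ref{stefna}, \ref{snyst}, with the signs adjusted (only the $s\to-\infty$ branch of Corollary~\ref{stefna} occurs here, and $-B>0$).

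The crux is to decide which pairings of ends actually occur. If $x>0$ everywhere, the backward end cannot converge to $P_{+}$ (a sink) nor to $P_{-}$ (where $x<0$), so it is an unbounded end; one then shows the forward end is a bounded end (not a second unbounded one), and the curve is exactly of type~2, the comet spiral. If $x$ changes sign once, the forward part of the orbit lies in $\{x>0\}$ and the backward part in $\{x<0\}$; ruling out an unbounded end on either side forces convergence to $P_{+}$ forward and $P_{-}$ backward, i.e.\ a heteroclinic connection, giving a curve of type~1 both of whose ends wind counterclockwise onto the circle. The heteroclinic orbits form a nonempty open set --- nonempty because an orbit through a small punctured neighborhood of the source $P_{-}$ has $\alpha$-limit $P_{-}$, and one checks its forward orbit converges to $P_{+}$ --- so the type-1 curves form a one-parameter family; and a shooting argument among the orbits entering $\{x>0\}$ produces at least one comet spiral. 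Finally, for the flow: by Section~\ref{allirfundnir} the curve screw-dilates with $f(t)=\frac{A}{2B}\log(2Bt+1)$ and $g(t)=\sqrt{2Bt+1}$; since $B<0$, $g$ decreases to $0$ and $|f|\to\infty$ as $t\to-\frac{1}{2B}$, so a singularity forms there, and for a type-1 curve both ends limit to the bounded circle, whence $r=|X|$ is bounded on $\mathbf R$ and $g(t)|X|\to0$ uniformly --- the curve disappears into the origin.

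I expect the two hardest steps to be exactly the ``pairing of ends'' --- showing that a bounded (circle) end is never mismatched with an unbounded (spiral) end and that no orbit has two unbounded ends, so that types~1 and~2 are exhaustive --- and the construction of at least one comet spiral (together with the nonemptiness of the type-1 family) by a shooting or trapping-region argument. Everything else is a fairly mechanical adaptation of the phase-plane bookkeeping of Section~\ref{primero}, plus the easy Dulac and linearization computations above.
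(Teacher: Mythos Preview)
Your strategy is correct and matches the paper's approach closely: the same normalization $A>0$, the same Dulac weight $1/x$ to kill periodic orbits, the same linearization at $P_{\pm}$, the single-sign-change property of $x$, Poincar\'e--Bendixson to force convergence to $P_{+}$ in the right half-plane, and the dichotomy ``$x$ changes sign $\Leftrightarrow$ type~1, $x>0$ throughout $\Leftrightarrow$ type~2'' together with a shooting/boundary argument for the existence of a comet spiral. Two small remarks. First, your sentence about ``showing that a bounded (circle) end is never mismatched with an unbounded (spiral) end'' is misworded --- type~2 is precisely that mismatch; the only pairing you need to rule out is (unbounded, unbounded), and this follows once you know every forward orbit in $\{x>0\}$ converges to $P_{+}$. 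Second, the paper makes that forward-convergence global by a concrete device you do not spell out: it first tracks the \emph{symmetric} trajectory (the one with $x_0=y_0=0$) by an explicit nullcline chase and shows it spirals into $P_{+}$; any other orbit in $\{x>0\}$ is then trapped between this trajectory and itself, so it too spirals inward. This is exactly the ``finer discussion of the nullclines'' you anticipate, and it is the one place where a genuine extra idea is needed beyond Section~\ref{primero}.
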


Curves of this kind can be seen in Figures \ref{A1Bm005samhverfi}-\ref{A1Bm5halastjarnan}.\\

The proof will be given through a series of lemmas. By reflecting the curve $X$ if necessary, we may assume $A>0$. By rescaling, we may assume $A = 1$ and $B = -\beta^2$ for some $\beta > 0$. Then we have the system
\begin{equation*}\left\{\begin{aligned}
x' &= xy+1,\\
y' &= -x^2+\beta^2.
\end{aligned}\right.\end{equation*}
By symmetry, it will suffice to look at the behaviour of the solutions in the right half-plane. There the system has a fixed point $(\beta,-\frac{1}{\beta})$ which corresponds to $X$ being the circle of radius $\frac{1}{\beta}$ parametrized counterclockwise (making the rotation vacuous). The eigenvalues of the linearization of the system around the fixed point are
\begin{equation*}
\frac{-1\pm \sqrt{1-8\beta^4}}{2\beta}.
\end{equation*}
Since the real part is always negative, the fixed point is a sink, i.e., asymptotically stable. Moreover, it is a spiral if $\beta > 8^{-1/4}$ and a node if $0 < \beta < 8^{-1/4}$. It will turn out to be asymptotically stable in the whole right half-plane. We begin with a simple lemma.

\begin{lem}
\label{klart}
The curvature $x$ has at most one zero, is negative before it and positive after it.  Moreover, if $x'(s)=0$, then $x$ has a maximum at $s$ if $x(s)>\beta$ and a minimum if $0<x(s)<\beta$.
\end{lem}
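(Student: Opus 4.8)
The plan is to mimic exactly the argument already used for Lemma \ref{fyrsta} in the rotate-and-expand case, adjusting only for the fact that here $B = -\beta^2 < 0$ so the nullcline for $y$ is no longer the single line $\{x=0\}$ but rather the pair of lines $\{x = \pm\beta\}$. First I would establish the statement about the zeros of $x$: if $x(s) = 0$ then from $x' = xy + 1$ we get $x'(s) = 1 > 0$, so $x$ is strictly increasing through each of its zeros; hence $x$ cannot return to $0$ and can have at most one zero, and it must be negative just before it and positive just after it. This part is verbatim the same as before, since the equation $x' = xy + A$ with $A = 1$ is unchanged.

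Next I would handle the classification of critical points of $x$. Differentiating $x' = xy + 1$ and substituting the ODEs gives
\begin{equation*}
x'' = x'y + xy' = x'y + x(-x^2 + \beta^2).
\end{equation*}
At a critical point $s$ of $x$ we have $x'(s) = 0$, so
\begin{equation*}
x''(s) = x(s)\bigl(\beta^2 - x(s)^2\bigr) = -x(s)\bigl(x(s) - \beta\bigr)\bigl(x(s)+\beta\bigr).
\end{equation*}
Since we are working in the right half-plane, $x(s) > 0$. If $x(s) > \beta$ then $\beta^2 - x(s)^2 < 0$, so $x''(s) < 0$ and $s$ is a local maximum; if $0 < x(s) < \beta$ then $\beta^2 - x(s)^2 > 0$, so $x''(s) > 0$ and $s$ is a local minimum. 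This is precisely the claimed dichotomy.

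I expect no serious obstacle here — this lemma is a routine first-derivative/second-derivative test applied to the ODE system, directly parallel to Lemma \ref{fyrsta}. The only point requiring a word of care is the restriction to the right half-plane: the statement as phrased ("if $x'(s) = 0$ then $x$ has a maximum at $s$ if $x(s) > \beta$ and a minimum if $0 < x(s) < \beta$") already presupposes $x(s) > 0$, which is consistent with the announced reduction to studying solutions in the right half-plane $\{x > 0\}$. One might also remark, though it is not asserted in the statement, that $x$ cannot have a critical point with $x(s) = \beta$ that is not also forced by the fixed point, since there $x'' = 0$ and one would need to look at higher derivatives; but since the lemma only claims something for $x(s) > \beta$ and $0 < x(s) < \beta$, this degenerate case can be left aside. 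I would therefore present the proof in two short paragraphs exactly as above, citing the ODE system displayed just before the lemma.
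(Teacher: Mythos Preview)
Your proof is correct and follows exactly the same approach as the paper: the paper's proof simply notes that $x'(s)=1$ when $x(s)=0$ for the first statement, and that $x''(s)=-x(s)(x^2(s)-\beta^2)$ when $x'(s)=0$ for the second. Your write-up is just a slightly more expanded version of the same two observations.
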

\begin{proof}
The first statement is clear since $x'(s) = 1$ if $x(s)=0$. The second statement follows from the fact that $x''(s) = -x(s)(x^2(s)-\beta^2)$ if $x'(s)=0$.
\end{proof}

\begin{lem}
\label{spiralinn}
If $x_0 = 0$, then $\lim_{s\rightarrow \infty}(x,y) = (\beta,-\frac{1}{\beta})$.
\begin{proof}
First assume $x<\beta$ always holds. Then $y'>0$, so $y$ is increasing. If $y$ attains the value $0$, then $x' \geq 1$ from that point onwards, which implies $x$ will eventually reach the value $\beta$, a contradiction. Hence, $y$ is bounded, so it has a finite limit. It is clear that $y''$ is bounded since both $x$ and $y$ are bounded, and hence, by Barbalat's lemma, $y' \rightarrow 0$. But then $x \rightarrow \beta$, and since both $x$ and $y$ have limits, the limit has to be the fixed point $(\beta,-\frac{1}{\beta})$.

In the other case, $x$ reaches $\beta$. Let $s_1$ be the first time that happens. Then $y'(s_1)=0$, and we must have $y(s_1) >-\frac 1 \beta$ since $x'(s_1) > 0$. Again we have two possibilities. First assume $x>\beta$ holds for all $s > s_1$. Then $y'<0$, so $y$ is decreasing. If $y$ reaches $-\frac 1 \beta$, then we will have $x' \leq -\delta < 0$, so $x$ goes down to $\beta$, a contradiction. Therefore $y$ is bounded from below and has a finite limit. Also note that $x$ can't always be increasing, since then $y'$ would be decreasing and $y$ would go to $-\infty$. Therefore, $x$ reaches a maximum and after that will be decreasing, by Lemma \ref{klart}. Hence, $x$ also has a finite limit, and thus $(x,y)$ has the fixed point as its limit.
 
The other possibility is that $x$ reaches $\beta$ again. Let $s_2$ be the first time this happens. Then $y'(s_2)=0$, and we must have $y(s_2) <-\frac 1 \beta$ since $x'(s_2) < 0$.  By repeating the argument from the first paragraph we get that either $(x,y)$ goes directly to the fixed point or there exists an $s_3$ such that $x(s_3)=\beta$ and $y(s_3) > -\frac 1 \beta$. Also note that since $x_0=0$, $x>0$ for all $s>0$ and the $(x,y)$ trajectory can't cross itself, we must have $y(s_3) <  y(s_1)$, i.e., the curve spirals inwards.

Now, let $K$ be the compact region in the plane with boundary consisting of the $(x,y)$ trajectory between $s_1$ and $s_3$ and the line segment between those two endpoints. Then it is clear that the trajectory after $s_3$ will never leave $K$, since it can neither cross its own path nor the line segment because there the arrows of the $(F,G)$ vector field are all pointing into $K$. Hence the result of the lemma follows from the Poincar\'e-Bendixson theorem (see \cite{gh}) and the following lemma. 
\end{proof}
\end{lem}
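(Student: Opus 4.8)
The plan is to trap the forward orbit in a compact subset of the right half-plane and then apply the Poincar\'e--Bendixson theorem. Since $x_0=0$ and $x'=xy+1=1>0$ at every point of the $y$-axis, the vector field points strictly rightward along $\{x=0\}$; hence the orbit enters the open right half-plane $\{x>0\}$ immediately and can never cross back, so for all $s>0$ it lives in $\{x>0\}$. This region contains exactly one fixed point, $p=(\beta,-\tfrac1\beta)$, which by the linearization computed above is a sink. My goal is therefore to show the orbit is bounded and eventually confined to a compact set whose only fixed point is $p$; Poincar\'e--Bendixson together with the absence of periodic orbits will then force the $\omega$-limit set to equal $\{p\}$, which for a bounded orbit is exactly the asserted convergence.

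First I would read the flow off the nullclines. The $y$-nullcline is the vertical line $x=\beta$, with $y'>0$ to its left and $y'<0$ to its right, and the $x$-nullcline is the hyperbola $y=-1/x$, with $x'>0$ above it and $x'<0$ below; together these induce a rotational pattern around $p$. I would then follow the successive crossings of $x=\beta$ in the direction of increasing $x$. At such a crossing one has $x'=\beta y+1>0$, so $y>-\tfrac1\beta$. Because an autonomous planar orbit cannot intersect itself, the heights of these crossings form a monotone sequence, and using that $y$ falls on each excursion into $x>\beta$ and rises on each excursion into $x<\beta$ one sees the sequence is bounded, i.e.\ the orbit spirals inward. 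Taking the arc of the orbit between two consecutive such crossings together with the vertical chord joining their endpoints, I obtain a compact region $K$: along the chord $x'>0$ points into $K$, and the orbit cannot cross its own past arc, so the orbit is trapped in $K$ for all later $s$.

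The two degenerate branches I would dispatch directly rather than by Poincar\'e--Bendixson. If $x<\beta$ for all $s$, then $y'=\beta^2-x^2>0$, so $y$ increases; it cannot reach $0$, since then $x'\geq 1$ would push $x$ up to $\beta$, so $y$ is bounded and converges, and as $y''$ is bounded Barbalat's lemma gives $y'\to0$, hence $x\to\beta$ and the limit is $p$. If $x$ crosses $\beta$ but thereafter stays in $x>\beta$, then $y$ is decreasing and bounded below by $-\tfrac1\beta$ (it cannot reach $-\tfrac1\beta$ without forcing $x$ back down), while by Lemma~\ref{klart} $x$ must attain a maximum and then decrease; the resulting bounded monotone behaviour again yields convergence to $p$.

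It remains to run Poincar\'e--Bendixson in the trapped case. The orbit lies in a compact $\overline K\subset\{x>0\}$, so its $\omega$-limit set $\omega$ is nonempty, compact, connected and invariant; by the theorem $\omega$ is a fixed point, a periodic orbit, or a cycle of fixed points joined by connecting orbits. The first possibility gives $\omega=\{p\}$, the only fixed point available; the cycle possibility would require a homoclinic loop at $p$, which is impossible because $p$ is a sink and so admits no orbit leaving it; and the periodic-orbit possibility is ruled out by the following lemma (a Bendixson--Dulac divergence argument). Hence $\omega=\{p\}$ and $\lim_{s\to\infty}(x,y)=(\beta,-\tfrac1\beta)$. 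I expect the real difficulty to lie not in this final step but in the trapping itself: one must verify rigorously that the inward-crossing heights form a bounded monotone sequence and that the outward excursions into $x>\beta$ cannot drift off to infinity, which is precisely where the sign structure of the nullclines and the non-self-intersection of orbits have to be combined with care.
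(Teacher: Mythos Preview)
Your approach is essentially the paper's: handle the two degenerate branches ($x<\beta$ forever; $x>\beta$ forever after the first crossing) by monotonicity plus Barbalat, and in the oscillating case trap the orbit in a compact region bounded by an arc of the trajectory between two upward crossings of $x=\beta$ and the vertical chord joining them, then invoke Poincar\'e--Bendixson together with the Dulac lemma that follows.

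The one genuine gap in your write-up is the step you yourself flag: why is the sequence of upward-crossing heights \emph{decreasing} (hence bounded), rather than increasing and possibly escaping to $+\infty$? Monotonicity of the Poincar\'e map on the section $\{x=\beta,\,y>-1/\beta\}$ follows from non-self-intersection, but the direction does not, and the sign pattern of $y'$ on the two sides of $x=\beta$ is not enough by itself. The paper closes this gap precisely by using the hypothesis $x_0=0$: the initial arc from $(0,y_0)$ to the first upward crossing $(\beta,y(s_1))$ lies entirely in $\{0<x<\beta\}$, and since the orbit cannot cross it later, the excursion back into $\{x<\beta\}$ after $s_2$ is trapped below this arc, forcing $y(s_3)<y(s_1)$. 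That single use of the initial condition is what turns ``monotone'' into ``decreasing'' and makes the trapping region well defined. Once you insert this observation, your argument and the paper's coincide.
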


\begin{lem}
\label{dulli}
There are no periodic orbits in the right half-plane.
\end{lem}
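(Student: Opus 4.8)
### Proof proposal

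The plan is to rule out periodic orbits in the right half-plane $\{x > 0\}$ by a Dulac-type argument, i.e.~by exhibiting a function $\mu(x,y)$ such that the divergence of the rescaled vector field $\mu(F,G)$ does not change sign on any simply connected region containing a putative periodic orbit. Recall our system is $x' = xy+1$, $y' = -x^2+\beta^2$. The raw divergence is $\partial_x F + \partial_y G = y$, which of course changes sign, so we need a nontrivial multiplier. A natural first guess, motivated by the fact that periodic orbits would have to encircle the unique fixed point $(\beta,-\frac{1}{\beta})$ and hence must cross the curve $y = 0$, is to try $\mu(x,y) = 1/x$ or some power $\mu = x^{a}$; since $x > 0$ throughout the region, such a multiplier is smooth and nonvanishing there and does not alter the periodic-orbit structure. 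With $\mu = x^{a}$ one computes
\begin{equation*}
\operatorname{div}(\mu F, \mu G) = x^{a}\Big( (a+1)y + a\,\frac{\beta^2 - x^2}{x}\cdot\frac{x}{?}\Big),
\end{equation*}
and the goal is to pick $a$ so that the bracket has a definite sign — I would expect $a = -1$ (so $\mu = 1/x$) to be the right choice, giving something like $\operatorname{div} = (\beta^2 - x^2)/x^2$ plus lower-order terms, which is not yet sign-definite, so a more clever multiplier of the form $\mu = x^{a} e^{by}$ or $\mu = x^{a}(\text{linear in }y)$ is likely needed. The first concrete step, then, is to search this small family of multipliers for one that works.

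If a clean Dulac function is not forthcoming, the fallback — and this may in fact be the cleaner route given the lemmas already proved — is a direct topological argument. By Lemmas \ref{klart} and the reasoning in Lemma \ref{spiralinn}, any trajectory in the right half-plane is eventually trapped in the compact region $K$ and, by Poincar\'e--Bendixson, its $\omega$-limit set is either the fixed point or a periodic orbit; a periodic orbit $\gamma$ would have to enclose $(\beta,-\frac{1}{\beta})$. The idea is to show that the ``spiraling inward'' monotonicity established in Lemma \ref{spiralinn} is strict: each successive crossing of the vertical line $x = \beta$ at which $y$ is above (resp.\ below) $-\frac1\beta$ is strictly closer to $-\frac1\beta$ than the previous such crossing, because the trajectory cannot cross itself and the relevant line segments are transversal to the flow with the field pointing consistently inward. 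Hence the sequence of crossing values is strictly monotone, so no trajectory can be periodic — a periodic orbit would revisit the exact same crossing value. This turns the qualitative picture from Lemma \ref{spiralinn} into the nonexistence statement.

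I would carry out the steps in this order: (1) set up the crossing map on the segment of the line $x = \beta$ between the trajectory's first two returns and show it is well-defined using the trapping region $K$; (2) verify transversality of the flow to this segment and to the chord closing $K$, with the vector field pointing into $K$, so that the trajectory is confined and the crossing map is a genuine one-dimensional monotone (Poincar\'e) map; (3) conclude strict monotonicity of iterates, hence the crossing map has no fixed point other than the limit, hence no periodic orbit; (4) alternatively, record the Dulac multiplier if step (1) of the first approach succeeds, since that gives a one-line proof. The main obstacle I anticipate is step (2)–(3): making the monotonicity \emph{strict} and correctly handling the degenerate possibility that the trajectory is tangent to, or runs along, a level set — one must use the uniqueness of solutions to the ODE system (no self-crossings) together with the sign of $y' = \beta^2 - x^2$ on the two sides of $x = \beta$ to pin down the direction of spiraling, exactly as in the last paragraph of the proof of Lemma \ref{spiralinn}. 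If instead the Dulac route works, the obstacle is purely computational: finding the right integrating-factor ansatz, with $\mu = x^{a}e^{by}$ the first family I would test.
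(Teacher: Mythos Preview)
Your first instinct is exactly right, and the paper's proof is precisely the one-line Dulac argument with $\mu = 1/x$ that you propose as a first guess. The problem is that you miscompute the divergence and wrongly conclude that this multiplier fails. With $F = xy + 1$ and $G = -x^2 + \beta^2$ one has
\[
\frac{F}{x} = y + \frac{1}{x}, \qquad \frac{G}{x} = -x + \frac{\beta^2}{x},
\]
so
\[
\partial_x\!\Big(\frac{F}{x}\Big) + \partial_y\!\Big(\frac{G}{x}\Big) = -\frac{1}{x^2} + 0 = -\frac{1}{x^2},
\]
which is strictly negative on the entire right half-plane. Dulac's criterion then rules out periodic orbits immediately. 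Note that $G$ does not depend on $y$, so $\partial_y(\mu G) = 0$ for \emph{any} $\mu = \mu(x)$; the $(\beta^2 - x^2)$ term you wrote down should never appear in the divergence, which suggests you accidentally took $\partial_x$ of the second component instead of $\partial_y$.

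Your fallback route via a strict Poincar\'e return map on the segment $x = \beta$ is plausible and could be made to work, but it is unnecessary here and would require some care: the trapping-region construction in Lemma~\ref{spiralinn} is fine to invoke, but its \emph{conclusion} already relies on the present lemma, so you would have to extract only the geometric part of that argument and then independently supply the strict monotonicity of successive crossings. That is considerably more work than the two-line Dulac computation.
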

\begin{proof}
Define the scalar field $H(x,y) = \frac 1 x$ which is smooth in the right half-plane. Since
\begin{equation*}
\frac{\partial}{\partial x} (HF) + \frac{\partial}{\partial y} (HG) = -\frac{1}{x^2}
\end{equation*}
does not change sign in the right half-plane, by Dulac's criterion (see \cite{gh}) there can be no periodic orbits there.
\end{proof}

\begin{lem}
\label{spiralhinir}
The fixed point is asymptotically stable in the whole right half-plane. In other words, if $x_0 >0$, then $\lim_{s\rightarrow \infty}(x,y) = (\beta,-\frac{1}{\beta})$.
\end{lem}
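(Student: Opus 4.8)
The plan is to reduce the case of an arbitrary trajectory with $x_0 > 0$ to the already-understood symmetric trajectory (the one with $x_0 = y_0 = 0$) by showing that every right-half-plane trajectory is eventually trapped in a compact region on which Poincar\'e--Bendixson applies, and that the region contains no periodic orbits. The key tool is Lemma \ref{dulli}: since Dulac's criterion rules out \emph{all} periodic orbits in the open right half-plane, the Poincar\'e--Bendixson theorem will force the $\omega$-limit set of any trapped trajectory to be the unique fixed point $(\beta, -\tfrac1\beta)$, which we already know is a sink. So the whole content is to produce a positively invariant compact set containing the forward orbit and sitting inside $\{x > 0\}$.

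First I would show that no trajectory starting with $x_0 > 0$ can leave the right half-plane: by Lemma \ref{klart} the curvature $x$ has at most one zero and must be increasing through it, so once $x$ is positive and the trajectory has passed its (at most one) zero, $x$ stays positive; and if $x_0>0$ but the trajectory still has a zero ahead of it, that zero is crossed transversally with $x' = 1 > 0$, after which $x$ is positive forever. Next I would use the qualitative analysis already carried out for the symmetric trajectory — the arguments in the proof of Lemma \ref{spiralinn} about the sign of $y'$ in the strips $x < \beta$ and $x > \beta$, the fact that $x$ cannot be monotone forever (else $y \to \pm\infty$ and then $x' = xy + 1$ forces $x$ back across $\beta$), and the alternating-extrema structure from Lemma \ref{klart} — to conclude that for any trajectory with $x_0 > 0$ the $x$-coordinate is bounded and the $y$-coordinate is bounded as well (here $B<0$ so $y' = -x^2 + \beta^2$ changes sign only across $x = \beta$, which is what keeps $y$ from running off). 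Boundedness of the forward orbit in $\{x>0\}$, together with the fact that the closure of the forward orbit stays a positive distance from the line $x=0$ (again because $x$ is driven back up whenever it gets small, since $x' = xy+1$ is close to $1$ there), gives the desired compact positively invariant set.

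Concretely, I expect the cleanest packaging is: take $s_1$ large enough that the trajectory has already crossed $\beta$ at least once going forward (this exists by the "not eventually monotone" observation), let $s_1 < s_3$ be consecutive times with $x(s_i) = \beta$ and $y(s_1) > -\tfrac1\beta > y(s_3)$ as in Lemma \ref{spiralinn}, and let $K$ be the compact region bounded by the trajectory arc between $s_1$ and $s_3$ together with the segment of the line $x = \beta$ joining the endpoints; the vector field $(F,G) = (xy+1, -x^2+\beta^2)$ points into $K$ across that segment (there $G = 0$ and $F = \beta y + 1$ has the right sign by the choice of endpoints, or one checks the flow direction directly), and the trajectory cannot cross its own arc, so the forward orbit after $s_3$ is trapped in $K \subset \{x>0\}$. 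Then Poincar\'e--Bendixson plus Lemma \ref{dulli} finishes it.

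The main obstacle is not the Poincar\'e--Bendixson machinery — that is routine once the trap is built — but verifying uniformly, for \emph{every} initial condition with $x_0 > 0$ (not just the special symmetric one), that the trajectory does eventually cross $x = \beta$ with the correct alternation of $y$-values so that the region $K$ can be constructed; the case analysis when $x_0$ is very large or when $y_0$ is very negative, where $x$ might plunge before it ever exceeds $\beta$, needs to be handled, and one must rule out the trajectory escaping toward $x \to 0^+$ without the fixed point being its limit. I would handle this by the same "$x$ cannot be eventually monotone" dichotomy used in Lemma \ref{spiralinn}, applied now without the hypothesis $x_0 = 0$: in every branch either $(x,y)$ limits directly onto the fixed point or it performs one more inward loop crossing $x=\beta$, and since there are no periodic orbits the inward spiralling must terminate at the fixed point.
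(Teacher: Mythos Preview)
Your overall strategy (trap the orbit, invoke Poincar\'e--Bendixson, kill periodic orbits with Dulac) is the same as the paper's, but you miss the one-line shortcut the paper actually uses, and in its place your explicit trap has an error.

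The paper's proof is essentially two sentences: the argument of Lemma~\ref{spiralinn} used the hypothesis $x_0=0$ only to conclude that the successive crossings of $x=\beta$ with $y>-\tfrac1\beta$ satisfy $y(s_3)<y(s_1)$ (inward spiral); for an arbitrary trajectory with $x_0>0$, this inequality is forced simply because an outward-spiralling trajectory would have to cross the symmetric trajectory, which is impossible by uniqueness. Everything else in Lemma~\ref{spiralinn} goes through verbatim. You never invoke this non-crossing observation, and instead try to rebuild the trap from scratch for each orbit.

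Your concrete construction of $K$ does not work as written. You take consecutive crossings $s_1<s_3$ of $x=\beta$ with $y(s_1)>-\tfrac1\beta>y(s_3)$ and close off with the vertical segment between them. But on that segment $G=0$ and $F=\beta y+1$ changes sign at $y=-\tfrac1\beta$: above that height the field points rightward (into your region), below it the field points leftward (out of your region). So the set is not positively invariant. The paper avoids this by taking $s_1$ and $s_3$ to be two crossings \emph{both} with $y>-\tfrac1\beta$ (one full revolution apart), so the closing segment lies entirely in $\{F>0\}$; but for that you need $y(s_3)<y(s_1)$, which is exactly the inward-spiral inequality you have not established and which the paper gets for free from non-crossing.

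Your alternative route --- prove directly that the forward orbit is bounded in $\{x>0\}$ and bounded away from $x=0$, then apply Poincar\'e--Bendixson and Dulac without ever building an explicit $K$ --- is viable and would give a correct proof, but it is substantially more work than the paper's argument, and your sketch of the boundedness step (``$x$ cannot be eventually monotone'', ``$y$ cannot run off'') would need to be fleshed out carefully, especially the lower bound on $x$ once $y$ is known to be bounded.
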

\begin{proof}
In Lemma \ref{spiralinn}, we proved this result for trajectories in the right half-plane which cross the $y$-axis. However, the only place we used that extra condition was to show that the trajectory can't spiral outwards. But if one trajectory spirals inwards, then all the others also have to spiral inwards since otherwise they would be forced to cross the one spiraling inwards. Hence the same argument as before works for all trajectories in the right half-plane.
\end{proof}

But what do the trajectories in the right half-plane do when we go backwards in time? Let $\Omega$ be the region in the plane defined by $\Omega=\{(x,y): 0<x<\beta\}$. By the following lemma, it is clear that if a trajectory leaves $\Omega$ through the right boundary, then it has to come back in. 

\begin{lem}
\label{afturarassabak}
If $x(s_0) > \beta$, there exists $s < s_0$ such that $x(s)=\beta$. 
\end{lem}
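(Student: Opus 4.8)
The plan is to argue by contradiction: suppose the trajectory starting from the point with $x(s_0) > \beta$ has $x(s) > \beta$ for all $s \le s_0$, i.e. the trajectory stays in the half-plane $\{x > \beta\}$ for all backward time. We then want to derive a contradiction by tracking the behaviour of $y$ as $s \to -\infty$. The sign structure of the system in the region $\{x > \beta\}$ is favourable for this: there we have $y' = -x^2 + \beta^2 < 0$, so $y$ is strictly decreasing; going backward in $s$, $y$ is strictly increasing. First I would observe that if $x > \beta$ holds on all of $(-\infty, s_0]$, then $y$ increases as $s$ decreases and is bounded below on that ray by $y(s_0)$; I then want to show $y$ is in fact bounded above as $s \to -\infty$, so that $y$ has a finite limit in the backward direction.

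The key step is to control $x$ from above in the region $\{x > \beta\}$. Here I would use the equation $x' = xy + 1$ together with the fact that, in backward time, once $y$ exceeds $-1/\beta$ we get $x' = xy + 1 > -x/\beta + 1$... more carefully: I expect $x$ to be bounded on $(-\infty, s_0]$ (it cannot blow up in finite backward time by the a priori bound on $\sqrt{x^2+y^2}$ already established, but I need boundedness on the whole infinite ray). The cleanest route: since $y$ is monotone in backward time and $x$ stays $> \beta$, examine whether $x$ can be monotone. If $x$ were eventually monotone in backward time, then by Lemma \ref{klart} (extrema classification) and the sign of $y'$ one can pin down the behaviour; in particular, if $x \to +\infty$ as $s \to -\infty$ then $y' = -x^2 + \beta^2 \to -\infty$, so $y \to +\infty$ as $s \to -\infty$, and then for $s$ very negative $x' = xy + 1 > 0$, meaning $x$ is increasing (forward), i.e. decreasing as $s \to -\infty$, contradicting $x \to +\infty$ backward. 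So $x$ must be bounded on $(-\infty, s_0]$, hence both $x$ and $y$ are bounded there.

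With $x,y$ bounded on $(-\infty,s_0]$, I would apply Barbalat's lemma (as used in Lemmas \ref{ylimit} and \ref{spiralinn}): $y$ is monotone and bounded in backward time, $y'' = -2xx'$ is bounded, so $y' \to 0$ as $s \to -\infty$, forcing $x \to \beta$ as $s \to -\infty$. But then $x' = xy + 1 \to \beta \bar y + 1$ where $\bar y = \lim_{s\to-\infty} y$, and since $(x,y) \to (\beta,\bar y)$ this limit point must be a fixed point of the system, forcing $\bar y = -1/\beta$. However, the point $(\beta,-1/\beta)$ is the unique fixed point and we have shown in Lemma \ref{spiralhinir} that it is a \emph{sink}: no trajectory other than the constant one converges to it as $s \to -\infty$ (a trajectory converging to a hyperbolic sink in backward time would have to lie on its unstable manifold, which is trivial since both eigenvalues have negative real part). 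This contradicts the assumption that $x > \beta$ strictly on the whole ray, completing the proof. The main obstacle I anticipate is making the ``$x$ is bounded on the infinite backward ray'' step airtight — ruling out oscillatory behaviour of $x$ that stays above $\beta$ — which is where Lemma \ref{klart} and the monotonicity of $y$ in $\{x>\beta\}$ must be combined carefully; alternatively one can phase-plane this directly by noting that in $\{x>\beta\}$ the trajectory, traced backward, is trapped and must limit on the only invariant set available, namely the fixed point.
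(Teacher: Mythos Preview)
Your overall architecture matches the paper's exactly: assume $x>\beta$ on all of $(-\infty,s_0]$, force the backward trajectory to converge to the fixed point $(\beta,-1/\beta)$, and then observe that a sink cannot attract nonconstant orbits in backward time. The difference lies only in how boundedness of $(x,y)$ is obtained, and here your write-up has a genuine gap.

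You argue that $x$ is bounded (ruling out $x\to+\infty$ backward via the sign of $x'$ once $y$ gets large), and then write ``hence both $x$ and $y$ are bounded there.'' That \emph{hence} is not justified: nothing you have said so far prevents $y\to+\infty$ while $x$ stays bounded above $\beta$. You can repair this: once Lemma~\ref{klart} gives that $x$ is eventually monotone with a finite backward limit $\bar x\ge\beta>0$, if $y\to+\infty$ then $x'=xy+1\to+\infty$, which is incompatible with $x$ being monotone and bounded on a half-line (the integral of $x'$ over any unit interval would be unbounded). So the route closes, but it takes an extra step you did not write down.

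The paper sidesteps this by bounding $y$ \emph{first}, with a one-line observation you missed: if, going backward, $y$ ever reached $0$, then from that point on (backward) $y\ge 0$ and hence $x'=xy+1\ge 1$, so $x$ decreases linearly in backward time and hits $\beta$, contradicting the standing assumption. Thus $y<0$ on the whole ray, and since $y$ is monotone it has a finite backward limit. With $y$ bounded, the paper then shows $x$ has a finite backward limit directly via Lemma~\ref{klart} (if $x'<0$ held for all $s<s_0$ one gets $y\to+\infty$, already excluded; so $x$ has a maximum and is monotone before it). No Barbalat is needed: both coordinates are monotone and bounded, so the backward limit exists and must be the fixed point, giving the contradiction. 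Your plan to invoke Barbalat works but is heavier than necessary, and your citation should be to the eigenvalue computation (which gives the local sink property) rather than to Lemma~\ref{spiralhinir}.
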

\begin{proof}
Assume not. Then $x > \beta$ for all $s<s_0$, so $y'<0$, and thus $y$ is decreasing (increasing in backwards direction). If $y$ manages to reach 0 when going backwards, then $x'\geq 1$, so $x$ will also reach $\beta$, a contradiction. Therefore, $y$ is bounded from above and has a finite limit at $-\infty$. If $x'<0$ for all $s < s_0$, then $x$ is decreasing, so $y'$ is decreasing and hence $y$ goes to $+\infty$ when $s\rightarrow -\infty$, a contradiction. Hence, $x$ reaches a maximum and before that is increasing, by Lemma \ref{klart}. Thus $x$ also has a finite limit when $s\rightarrow -\infty$. However, the limit of $(x,y)$ when $s\rightarrow -\infty$ would have to be a fixed point, and the only one in the right half-plane is $(\beta,-\frac 1 \beta)$. This can never happen since the fixed point repels trajectories when we go backwards in time.
\end{proof} 

Now, fix any trajectory and let $z_0$ be any point on the right boundary of $\Omega$ where the trajectory leaves $\Omega$ when we go backwards in time. Let $\gamma$ be the open half-line that forms the part of the right boundary strictly below $z_0$.  Take any trajectory that crosses the $y$-axis. It intersects $\gamma$ only finitely many times, say $k$ times, because it goes to the fixed point in the forward direction. But that means our trajectory can intersect $\gamma$ at most $k$ times, since if we look at the sequence of intersection points of the two trajectories with $\gamma$, no two consecutive points can belong to the same trajectory without the two trajectories intersecting. Hence, our trajectory will ultimately stop leaving $\Omega$ through the right boundary.

 After that there are two possible behaviours in the backwards direction:\\\\
1) The trajectory leaves $\Omega$ through the left boundary, i.e., crosses the $y$-axis. By Lemma \ref{spiralinn} and symmetry, it is then clear that $\lim_{s\rightarrow -\infty}(x,y) = (-\beta,\frac{1}{\beta})$, i.e., the trajectory goes to the fixed point in the left half-plane which is a source. Since $r^2 (1 + \beta^4) = (x^2+y^2)$, we have
\begin{equation*}
\lim_{s\rightarrow \pm\infty}r = \frac 1 \beta.
\end{equation*}
Hence, each arm of the curve $X$ has as its limiting behaviour wrapping counterclockwise around the circle of radius $\frac 1 \beta$. These curves form a one-dimensional family that can for example be parametrized by the value $y_0$ when $x_0=0$. \\\\
2) The trajectory stays in $\Omega$ forever. Then by the following lemma we have
\begin{equation*}
\lim_{s\rightarrow -\infty}x = 0, \lim_{s\rightarrow -\infty}y = -\infty, \hspace{0.2cm} \text{ and hence}\hspace{0.2cm} \lim_{s\rightarrow -\infty}r = \infty.
\end{equation*}
Hence, the curve $X$ has one arm wrapping counterclockwise around the circle of radius $\frac 1 \beta$ and the other one going out to infinity. The same arguments as in Lemmas \ref{markgildi}, \ref{philimit} and Corollaries \ref{stefna} and  \ref{snyst} show that the arm has infinite total curvature, spirals infinitely many circles around the origin and its limiting growing direction is $-B-iA$ times the location. The curvature of $X$ never changes sign.

\begin{lem}
\label{serstaki}
The trajectories of type $2$ satisfy
\begin{equation*}
\lim_{s\rightarrow -\infty}x = 0 \text{ and } \lim_{s\rightarrow -\infty}y = -\infty.
\end{equation*}
\end{lem}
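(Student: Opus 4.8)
The plan is to exploit that, once a trajectory of type~2 is confined to $\Omega=\{0<x<\beta\}$ for all $s\le s_0$, the coordinate $y$ becomes monotone, and then to combine this with Barbalat's lemma (to preclude $y$ staying bounded) and with a carefully chosen auxiliary function whose derivative has a definite sign on a suitable level set (to force $x\to 0$).

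First I would note that for $s\le s_0$ we have $y'=\beta^2-x^2>0$, so $y$ is strictly decreasing as $s\to-\infty$ and hence has a limit $L\in[-\infty,+\infty)$. If $L$ were finite, then $x$ and $y$ would both be bounded on $(-\infty,s_0]$, so $y''=-2x(xy+1)$ would be bounded and $y'$ uniformly continuous; since $y$ converges, Barbalat's lemma (see \cite{sl}) would give $y'(s)\to 0$, i.e.~$x(s)\to\beta$, and therefore $(x,y)\to(\beta,-\tfrac1\beta)$ as $s\to-\infty$. But this is the fixed point of the system, which repels trajectories as $s\to-\infty$ (as used in the proof of Lemma~\ref{afturarassabak}), so no nonconstant trajectory can converge to it; since our trajectory lies in the open set $\Omega$ it is nonconstant. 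This contradiction gives $\lim_{s\to-\infty}y=-\infty$.

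For the statement $\lim_{s\to-\infty}x=0$ I would introduce $w=xy$. A direct computation gives $w'=xy^2+y-x^3+\beta^2x$, and on the level set $\{w=-1\}$ (where $x=-1/y$) this collapses to $w'=x(\beta^2-x^2)$, which is \emph{strictly positive} throughout $\Omega$. Consequently a trajectory can pass through the level $w=-1$ only with $w$ increasing, so if $w(s_*)\le -1$ for some $s_*\le s_0$ then (nudging $s_*$ slightly to the left in the boundary case $w(s_*)=-1$) one gets $w<-1$ on all of $(-\infty,s_*]$. On that interval $x'=w+1<0$, so $x$ increases as $s\to-\infty$ while remaining below $\beta$, hence $x$ tends to some $\ell\in(0,\beta]$; but then $x'=xy+1\to-\infty$ since $y\to-\infty$, which is incompatible with $x$ being bounded. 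Therefore $w(s)>-1$ for all sufficiently negative $s$; choosing $s_1\le s_0$ so that also $y<0$ there, we get $0<x<-1/y=1/|y|$ on $(-\infty,s_1]$, and since $|y|\to\infty$ this yields $x\to 0$.

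The hard part is the $x\to 0$ conclusion: because $y\to-\infty$, the earlier style of argument (bounding a second derivative and applying Barbalat) is no longer available for $x$, so one must locate the right invariant/monotone structure by hand — here, the function $w=xy$ together with the sign computation of $w'$ on $\{w=-1\}$. The remaining points are routine but need attention: verifying the hypotheses of Barbalat's lemma in the first step, and handling the degenerate case in which the trajectory momentarily lies on the level set $w=-1$.
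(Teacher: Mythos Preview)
Your proof is correct. The argument for $y\to-\infty$ is essentially the paper's: $y$ is monotone in $\Omega$, and a finite limit would force the trajectory to approach the fixed point $(\beta,-1/\beta)$ as $s\to-\infty$, which is impossible since that point is a sink. The paper is terser here because it has already established (via Lemma~\ref{klart}) that $x$ converges, so both coordinates have limits and the limit must be an equilibrium; you instead supply this detail with Barbalat's lemma, which is fine.

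For $x\to 0$ you take a genuinely different route. The paper simply invokes Lemma~\ref{klart}: inside $\Omega$ every critical point of $x$ is a local minimum, so $x$ has at most one extremum for $s\le s_0$; being bounded between $0$ and $\beta$, $x$ therefore has a limit $\ell\in[0,\beta]$, and $\ell>0$ gives $x'=xy+1\to-\infty$, contradicting boundedness. Your approach via $w=xy$ and the sign computation $w'\big|_{w=-1}=x(\beta^2-x^2)>0$ avoids appealing to Lemma~\ref{klart} and instead yields the explicit pointwise bound $0<x<1/|y|$ directly. The paper's argument is shorter because the extremum structure is already in hand; your version is self-contained and delivers a quantitative decay rate as a byproduct. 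Your remark that the Barbalat-style argument is ``no longer available'' for $x$ is correct, but note that the paper sidesteps this not with an auxiliary function but simply by knowing in advance that $x$ is eventually monotone.
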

\begin{proof}
By Lemma \ref{klart}, $x$ has at most one extremum (after entering $\Omega$ for the last time), and since it is bounded, it has a finite limit as $s \rightarrow -\infty$.
Since the curve remains in the region $\Omega$, we have $y'>0$, so when going backwards in time $y$ is decreasing. If $y$ were bounded from below it would have a finite limit as $s\rightarrow -\infty$. But then $(x,y)$ would have to go to the fixed point, a contradiction since it repels trajectories when we go backwards in time. Hence, it is clear that $y\rightarrow -\infty$ as $s\rightarrow -\infty$.

Now, since $x'=xy+1$, it is clear that if $\lim_{s\rightarrow -\infty}x  >0$, then $\lim_{s\rightarrow -\infty}x' = -\infty$, a contradiction. Hence, $\lim_{s\rightarrow -\infty}x  =0$.
\end{proof}

So the remaining question is: Does there exist a curve of type 2? 
\begin{lem}
\label{2ertil}
There exists a curve of type $2$.
\end{lem}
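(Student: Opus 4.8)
The plan is a shooting argument carried out along an interior vertical line, so as to avoid any delicate behaviour near the fixed point. Fix once and for all some $\epsilon\in(0,\beta)$, and for $c\in\mathbf R$ let $\gamma_c=(x,y)$ be the trajectory of the planar system with $\gamma_c(0)=(\epsilon,c)$ (it is defined for all $s$). I follow $\gamma_c$ backward in time and record how it first exits the strip $\Omega=\{0<x<\beta\}$:
\[
\mathcal A=\{c:\ \gamma_c\text{ reaches }\{x=0\}\text{ strictly before }\{x=\beta\}\text{ as }s\text{ decreases}\},
\]
\[
\mathcal B=\{c:\ \gamma_c\text{ reaches }\{x=\beta\}\text{ strictly before }\{x=0\}\text{ as }s\text{ decreases}\}.
\]
Both kinds of crossing are transversal: on $\{x=0\}$ one has $x'=1$, and at a backward crossing of $\{x=\beta\}$ (arrived at from $x<\beta$) one has $x'=\beta y+1<0$ — equality would force the point to be the fixed point $(\beta,-\tfrac1\beta)$, which is never reached in finite time. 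Hence, by continuous dependence on initial conditions, $\mathcal A$ and $\mathcal B$ are open subsets of $\mathbf R$, and they are clearly disjoint.

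Next I would show that $\mathcal A$ and $\mathcal B$ are both nonempty by examining $|c|$ large. If $c>\beta^2\epsilon$, then as long as $\gamma_c$ stays in $\Omega$ with $y>0$, going backward $x'=xy+1>1$ forces $x$ to decrease at rate $>1$, so $\gamma_c$ reaches $\{x=0\}$ within backward time $\epsilon$; meanwhile $|y'|=\beta^2-x^2<\beta^2$ keeps $y>0$ over that interval, so the hypotheses persist and $\gamma_c$ hits $\{x=0\}$ before $x$ ever turns around toward $\beta$. Thus $(\beta^2\epsilon,\infty)\subset\mathcal A$. Symmetrically, if $c<-\tfrac1\epsilon$, then going backward $x'=xy+1<0$ makes $x$ increase, and since $x$ and $|y|$ then both increase the product $x|y|$ stays $>1$, so $x'<0$ persists and $x$ climbs monotonically until it meets $\{x=\beta\}$, never approaching $\{x=0\}$; thus $(-\infty,-\tfrac1\epsilon)\subset\mathcal B$.

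Finally, $\mathbf R$ is connected, so it cannot be written as the union of the two nonempty disjoint open sets $\mathcal A$ and $\mathcal B$; hence there is $c^\ast\notin\mathcal A\cup\mathcal B$. Since $\gamma_{c^\ast}$ crosses the walls of $\Omega$ only transversally (by the remark above) and yet belongs to neither $\mathcal A$ nor $\mathcal B$, it meets neither wall as $s$ decreases, i.e. $0<x<\beta$ for all $s\le 0$. By Lemma \ref{serstaki} this gives $\lim_{s\to-\infty}x=0$ and $\lim_{s\to-\infty}y=-\infty$, and since $x(0)=\epsilon>0$ Lemma \ref{spiralhinir} gives $\lim_{s\to+\infty}(x,y)=(\beta,-\tfrac1\beta)$; so the curve associated to $\gamma_{c^\ast}$ is precisely a curve of type 2.

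I expect the only genuine subtlety to be the choice of shooting family. Shooting along the outer wall $\{x=\beta\}$ is the obvious first attempt, but then one must decide whether an excursion that dips into $\Omega$ ever climbs back to $\{x=\beta\}$ or instead creeps into the boundary fixed point from within $\Omega$ — a dichotomy that depends on whether $(\beta,-\tfrac1\beta)$ is a node or a spiral. Running the shoot along an interior line $\{x=\epsilon\}$ sidesteps this: for $|c|$ large the trajectory traverses $\Omega$ so quickly that it must exit one wall or the other before anything delicate can occur, so the two extreme behaviours used to separate $\mathcal A$ from $\mathcal B$ are robust. Everything else is the standard connectedness dichotomy together with the already-established Lemma \ref{serstaki}.
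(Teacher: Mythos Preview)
Your argument is correct. The two nonemptiness claims check out: for large positive $c$ the backward trajectory has $x'>1$ while $y>0$, so $x$ drops monotonically to $0$ within backward time $\epsilon$, and $|y'|<\beta^2$ keeps $y>0$ over that interval; for $c<-1/\epsilon$ one has $xy<-1$ initially, and since in $\Omega$ backward time makes $x$ increase and $y$ decrease, the product $x|y|$ grows, so $xy+1$ stays bounded above by the negative constant $\epsilon c+1$ and $x$ reaches $\beta$ in finite backward time. Transversality on both walls gives openness, and connectedness of $\mathbf R$ produces $c^\ast$; the trajectory through $(\epsilon,c^\ast)$ then stays in $\Omega$ for all $s\le 0$, and Lemmas~\ref{serstaki} and~\ref{spiralhinir} finish the identification as a type~2 curve.

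The paper argues the same existence but from a different vantage point. It parametrizes the type~1 trajectories by the height $y_0$ at which they cross the $y$-axis, observes that the trajectories with $y_0>0$ and those with $y_0<0$ both accumulate onto the sink $(\beta,-\tfrac1\beta)$, and concludes that besides the symmetric trajectory there must be another separating trajectory ``on the other side'' which cannot itself cross the $y$-axis and is therefore of type~2. This is really the same connectedness idea, but the paper's version is phrased loosely (``the two sets will somehow have to meet'') and implicitly relies on the phase portrait near the sink. Your choice to shoot along an interior line $\{x=\epsilon\}$ rather than along either wall is a genuine improvement in rigor: it gives clean open conditions, avoids the node--versus--spiral dichotomy at the fixed point that you flagged, and makes the nonemptiness of both alternatives a direct computation for $|c|$ large. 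What the paper's viewpoint buys, on the other hand, is a clearer geometric picture of \emph{where} the type~2 trajectory sits---namely as the boundary of the region swept out by the type~1 family as $|y_0|\to\infty$.
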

\begin{proof}
Since the eigenvalues of the linearization of the vector field $(F,G)$ around the fixed point $(\beta,-\frac{1}{\beta})$ have negative real part, there exists a small circle $C$ around the fixed point such that everywhere on the circle, $(F,G)$ is pointing into it. That means every trajectory in the right half-plane crosses $C$ in a unique point. Now, map each point $(0,y_0)$ to the unique point on $C$ where the trajectory through $(0,y_0)$ intersects it. Then the $y$-axis maps bijectively and smoothly onto some relatively open, connected subset $U$ of $C$, i.e., an open interval which thus cannot be all of $C$. Let $z_0$ be any point in the complement of $U$ in $C$. Then the backwards trajectory from $z_0$ cannot cross the $y$-axis, so it must be of type 2. 
\end{proof}

This finishes the proof of the theorem. Note that these curves are usually not embedded but have loops. By Equation \eqref{flatarmal} and the fact that the motion is self-similar, we get that the area inside each loop decreases with the fixed rate $-(\pi+\alpha)$, where $\alpha \in (0,2\pi)$ is the interior angle at the double point. The loop shrinks to a point at time $t=-\frac{1}{2B}$, and therefore we have the following relationship between the interior angle and the area inside the loop:
\begin{equation*}
\text{Area inside loop} = -\frac{\pi+\alpha}{2B}.
\end{equation*}
Of course, we only get $\alpha = \pi$ (smooth loop) when the curve $X$ is the circle of radius $\frac{1}{\sqrt{-B}}$. Since $\alpha > 0$ it follows that each loop has at least half the area of the circle, which is an interesting fact.

\section{Shrinking curves}

The remaining two cases are already more or less known (see e.g.~ \cite{al}, \cite{ish} and \cite{urb}),  but we cover them here for the sake of completeness.

For $A=0$ and each $B<0$, we have the following description of the curves satisfying Equation \eqref{jafnan}. We omit the straight lines through the origin where the scaling is vacuous.

\begin{thm}
\label{inn} Each of the curves is contained in an annulus around the origin and consists of a series of identical excursions between the two boundaries of the annulus. The curvature is an increasing function of the radius and never changes sign. The inner and outer radii of the annulus, $r_{min}$ and $r_{max}$, satisfy $r_{min}e^{\frac{Br_{min}^2}{2}} = r_{max}e^{\frac{Br_{max}^2}{2}}$
 and take on every value in $(0,\tfrac{1}{\sqrt{-B}}]$ and $[\tfrac{1}{\sqrt{-B}},\infty)$, respectively.

The curves form a one-dimensional family parametrized by $r_{min}$ and are divided into two sets:

1) Closed curves, i.e., immersed $\mathbf S^1$ (Abresch-Langer curves). In addition to the circle, there is a curve with rotation number $p$ which touches each boundary of the annulus $q$ times for each pair of mutually prime positive integers $p,q$ such that $\frac{1}{2} < \frac p q < \frac{\sqrt{2}}{2}$.

2) Curves whose image is dense in the annulus.

Under the CSF these curves shrink with scaling function $g(t) = \sqrt{2Bt+1}$ until they disappear into the origin at time $t=-\frac{1}{2B}$. 
\end{thm}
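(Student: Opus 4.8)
The plan is to study the phase portrait of the system $x'=xy$, $y'=-x^2-B$ (here $A=0$) in the right half‑plane and to read off every assertion of the theorem from it. First, $\{x=0\}$ is an invariant line, since $x'=xy$ vanishes on it; by uniqueness the curvature $k=x$ therefore never changes sign, and $x_0=0$ gives exactly the straight lines through the origin excluded from the statement. Using the symmetry $s\mapsto-(x(-s),y(-s))$ we may assume $x_0>0$, so that $x>0$ for all $s$.

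Next I would exhibit the first integral $E:=\tfrac12(x^2+y^2)+B\log x$, which a one‑line computation shows to be constant along trajectories. On $\{x>0\}$ the function $E$ is proper (it tends to $+\infty$ as $x\to0^+$, as $x\to+\infty$, and as $|y|\to\infty$), bounded below, with a single critical point, a nondegenerate minimum at the fixed point $(\sqrt{-B},0)$, which corresponds to $X$ being the circle of radius $1/\sqrt{-B}$. Hence every level set $\{E=e\}$ above the minimum is a smooth simple closed curve around the fixed point, and, the vector field having no zeros on it, is a single periodic orbit of arclength period $P=P(e)$. To pass to the curve, recall $x+iy=-iB(\tau+i\nu)$, so $\tau=-y/B$ and $\nu=x/B$, whence $r^2=|X|^2=(x^2+y^2)/B^2$ and $\tfrac{d}{ds}r^2=2\tau=-2y/B$. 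Thus $r$ is stationary precisely where $y=0$, i.e.\ at the two points $(x_-,0)$ and $(x_+,0)$ with $0<x_-<\sqrt{-B}<x_+$ where the orbit meets the $x$‑axis; since $\tau'=1+x^2/B$ is positive at the first and negative at the second, $r$ has its minimum $r_{\min}=x_-/|B|$ at one and its maximum $r_{\max}=x_+/|B|$ at the other, and is monotone in between. So $X$ lies in the closed annulus $\{r_{\min}\le|X|\le r_{\max}\}$ and shuttles between its two boundary circles. Finally, rewriting the first integral as $E=\tfrac12B^2r^2+B\log x$ gives the explicit relation $k=x=e^{E/B}e^{-Br^2/2}$, a strictly increasing function of $r$ because $B<0$; this is the curvature–radius statement.

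For the ``identical excursions'' and the closed‑versus‑dense dichotomy: over one period $(x,y)$, and hence $\tau+i\nu$, returns to its initial value while $\theta$ increases by $\Delta\theta:=\int_0^Px\,ds>0$; since $X=e^{i\theta}(\tau+i\nu)$ this gives $X(s+P)=e^{i\Delta\theta}X(s)$, so the curve is a chain of congruent copies of one excursion, each obtained from the previous by rotation through $\Delta\theta$. Set $\rho=\rho(e):=\Delta\theta/2\pi$. If $\rho=p/q$ in lowest terms then $X(s+qP)=X(s)$, so $X$ is an immersed circle, with rotation number $\tfrac1{2\pi}\int_0^{qP}k\,ds=q\rho=p$, touching each boundary of the annulus exactly $q$ times. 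If $\rho$ is irrational then the points $X(nP)$, $n\in\mathbf Z$, are dense on the inner circle, and since every excursion is a fixed arc rotated into position and (by the intermediate value theorem) realizes every intermediate radius, the image of $X$ is dense in the annulus. The family is parametrized by the inner radius $r_{\min}=x_-/|B|$, which sweeps out $(0,1/\sqrt{-B})$ as $e$ ranges over $(E_{\min},\infty)$, the circle sitting at the endpoint $r_{\min}=1/\sqrt{-B}$. The assertion that $X$ shrinks under the CSF by $g(t)=\sqrt{2Bt+1}$, disappearing into the origin at $t=-1/2B$, is exactly the $A=0$, $B<0$ instance of the screw‑dilation established in Section~\ref{allirfundnir}.

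It remains to locate the range of $\rho$. Linearizing at the fixed point gives eigenvalues $\pm i\sqrt{-2B}$, so for orbits collapsing to it $P\to2\pi/\sqrt{-2B}$ while $x\to\sqrt{-B}$, hence $\rho\to\sqrt{-B}/\sqrt{-2B}=\tfrac{\sqrt2}{2}$; on the other hand, writing $\Delta\theta=2\int_{x_-}^{x_+}dx/\sqrt{2(E-\tfrac12x^2-B\log x)}$ and doing the asymptotics as $e\to\infty$ (where $x_-\to0$ and $x_+\to\infty$) gives $\rho\to\tfrac12$. Together with continuity of $\rho$ in $e$ and the intermediate value theorem, this already produces, for every coprime pair $p,q$ with $\tfrac12<p/q<\tfrac{\sqrt2}{2}$, a closed curve of rotation number $p$ touching each boundary $q$ times. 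The main obstacle — the only place where real work is needed — is the sharp claim that the circle together with these curves are \emph{all} the closed ones, and that the $(p,q)$‑curve is unique: this amounts to showing that $\rho(e)$ stays in $(\tfrac12,\tfrac{\sqrt2}{2})$ for every $e$ and is strictly monotone, which is the Abresch--Langer monotonicity result \cite{al}. In the present variables it becomes a monotonicity estimate for the period/twist function of the one‑dimensional conservative oscillator $u''=-(e^{2u}+B)$, with strictly convex potential $\tfrac12 e^{2u}+Bu$, obtained via the substitution $u=\log x$.
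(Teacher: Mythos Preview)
Your argument is correct and follows essentially the same architecture as the paper's: both hinge on the first integral (the paper writes it as the constant $xe^{-(x^2+y^2)/2}$ after rescaling to $B=-1$, which is $e^{-E}$ in your notation), derive the curvature--radius relation from it, read off the excursion structure from the periodicity of $(x,y)$ and the formula $X=e^{i\theta}(\tau+i\nu)$, and defer the monotonicity of $\rho$ to Abresch--Langer. The only real methodological difference is in establishing periodicity: the paper first proves directly that every trajectory hits $\{y=0\}$ both forward and backward (Lemma~\ref{ynidur}) and then invokes the reflection symmetry $(x(-s),-y(-s))$ to close the orbit, whereas you get closed orbits in one stroke from properness of $E$ and its unique nondegenerate minimum---a slightly cleaner route. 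You also go a bit further than the paper in sketching the endpoint limits of $\rho$ (the paper simply quotes both limits from \cite{al}); your linearization argument for $\rho\to\sqrt2/2$ is fine, but the ``asymptotics as $e\to\infty$'' giving $\rho\to\tfrac12$ would need to be carried out, and, as you correctly flag, the strict monotonicity (hence uniqueness of each $(p,q)$--curve and the sharp range) is the genuine Abresch--Langer input in either approach.
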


Curves of this kind can be seen in Figures \ref{AL1}-\ref{AL4}.\\

The proof is contained in a series of lemmas. 
By rescaling if necessary, we may assume $B=-1$. Then we have the following system for $x$ and $y$:
\begin{equation*}\left\{\begin{aligned}
x' &= xy,\\
y' &= -x^2+1.
\end{aligned}\right.\end{equation*}
From the first equation we immediately conclude
\begin{equation}
\label{xafy}
x(s) = x_0e^{\int_0^sy(s')ds'},
\end{equation}
so the curvature $x$ does not change sign. When $x_0 = 0$ we just get the solution $(x,y)$ = $(0,y_0+s)$ which corresponds to $X$ being a straight line, making the scaling vacuous. By parametrizing the curves backwards if necessary, we may assume $x_0>0$ and hence $x>0$. Note that the system has the fixed point $(1,0)$ corresponding to $X$ being the unit circle parametrized counterclockwise.

We start with a technical lemma.
\begin{lem}
\label{ynidur}
If $y_0 > 0$, then there exist $s_1 < 0 < s_2$ such that $y(s_1)=0$ and $y(s_2)=0$.
\end{lem}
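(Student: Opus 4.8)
The plan is to analyze the behavior of $y$ near $s=0$ and show it must cross zero both in the past and in the future under the hypothesis $y_0 > 0$. Recall the system in this case ($A=1$, $B=-1$) is $x' = xy + 1$, $y' = -x^2 + 1$, with $x > 0$ throughout (as established above), and we are in the right half-plane $x > 0$. Actually, wait — this lemma is from the section $A=0$, $B=-1$, so the system is $x' = xy$, $y' = -x^2 + 1$ with $x > 0$ throughout. The key structural facts I would use are Lemma \ref{klart}-type information (here specialized: since $x' = xy$, the sign of $x'$ equals the sign of $y$, so $x$ is increasing exactly when $y > 0$) together with the fixed point at $(1,0)$ and the absence of periodic orbits / the confinement geometry.

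First I would handle the forward direction. Suppose for contradiction that $y(s) > 0$ for all $s \geq 0$. Then $x' = xy > 0$, so $x$ is strictly increasing on $[0,\infty)$. Either $x$ stays bounded, in which case $x$ has a finite limit $x_\infty \geq x_0$; or $x \to \infty$. If $x$ stays bounded and $x_\infty \leq 1$, then since $x$ is increasing toward $x_\infty$ we'd have $y' = 1 - x^2 \geq 1 - x_\infty^2 \geq 0$, forcing $y$ to be nondecreasing; but a bounded monotone $y$ with bounded $y''$ (both $x,y$ bounded) must have $y' \to 0$ by Barbalat, giving $x \to 1$ and then $(x,y) \to (1,0)$, contradicting $y > 0$ staying away from... actually $y$ nondecreasing and positive can still limit to the fixed point only if $y_0 \le 0$, contradiction with $y_0>0$ since $y$ is nondecreasing. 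If instead $x_\infty > 1$ (or $x \to \infty$), then eventually $x > 1$, so $y' = 1 - x^2 < 0$ eventually, meaning $y$ is eventually decreasing; a positive eventually-decreasing $y$ has a finite nonnegative limit, and $x$ increasing past $1$ with $x' = xy$ — if $y$ limits to something positive then $x \to \infty$ and $y' \to -\infty$, contradiction; if $y \to 0$ then $x \to 1$, contradicting $x > 1$. Either way we reach a contradiction, so there exists $s_2 > 0$ with $y(s_2) = 0$.

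Next the backward direction. Suppose $y(s) > 0$ for all $s \leq 0$. Then going backward, $x' = xy > 0$ still means $x$ is increasing in $s$, hence $x$ is decreasing as $s \to -\infty$ and bounded below by $0$, so $x$ has a finite limit $x_{-\infty} \geq 0$ as $s \to -\infty$, with $x_{-\infty} \leq x_0$. Now $y$ is positive for all $s \le 0$; consider whether $y$ is bounded as $s \to -\infty$. If $y$ is bounded, then $y'' $ is bounded and Barbalat forces $y' \to 0$ along $s\to-\infty$, so $x_{-\infty} = 1$ and $(x,y) \to (1,0)$ — but the fixed point $(1,0)$ repels backward-in-time only if it attracts forward; here $(1,0)$ is a center-or-sink... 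I should instead argue directly: if $(x,y)\to(1,0)$ as $s\to-\infty$ then by uniqueness and the local phase portrait this is impossible unless the trajectory is the fixed point itself (one can check $(1,0)$ is not a backward attractor for nonconstant orbits), contradiction. If $y$ is unbounded as $s \to -\infty$, then since $y$ is decreasing in the backward direction it must tend to $+\infty$; but then from $y' = 1 - x^2$ with $y \to +\infty$ we need $1 - x^2$ large negative eventually, i.e. $x$ large, contradicting that $x$ is bounded (it has finite limit $x_{-\infty}$). Hence there exists $s_1 < 0$ with $y(s_1) = 0$.

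I expect the main obstacle to be the precise exclusion of the fixed point as a forward or backward limit: ruling out that the trajectory converges to $(1,0)$ spiraling or approaching with $y$ staying positive. The cleanest route is probably to invoke that the fixed point $(1,0)$ is a sink (or center) whose local behavior, combined with the no-periodic-orbits fact and the uniqueness of solutions, prevents a nonconstant orbit from approaching it in backward time, and to note that forward convergence to $(1,0)$ with $y > 0$ throughout would require $y$ to stay positive while limiting to $0$ from above — which is not itself contradictory, so one must instead use the Barbalat/monotonicity argument to pin down that $y$ eventually changes sign rather than just limits to $0$. I would make this rigorous by tracking that whenever $x > 1$ we have $y$ strictly decreasing, and $x$ must exceed $1$ eventually (since $x$ is increasing and cannot converge to a value $\le 1$ while $y$ stays positive, by Barbalat), after which $y$ decreases strictly and, not being able to accumulate at a positive value, must actually hit $0$.
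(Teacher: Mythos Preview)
Your forward argument is essentially sound once the sloppiness is repaired (you don't actually need Barbalat: if $x$ is increasing and stays $\le 1$, then $y'=1-x^2\ge 0$ gives $y\ge y_0>0$, hence $x'=xy\ge x_0y_0>0$ and $x\to\infty$, contradiction). This is exactly the paper's argument, which invokes the integral formula $x(s)=x_0\exp\bigl(\int_0^s y\bigr)$ to get the same blow-up in one line and then observes that once $x>1$ we have $y'\le -\delta<0$, forcing $y$ to hit $0$.

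The backward argument, however, has a genuine gap. Two problems: first, ``$y$ bounded'' does not by itself license Barbalat---you need $y$ to have a limit, and you have not established any monotonicity of $y$ backward. Second, and more seriously, the linearization of the system at $(1,0)$ has eigenvalues $\pm i\sqrt{2}$, so $(1,0)$ is a \emph{linear center}: you cannot conclude from local phase-portrait considerations alone that a nonconstant orbit fails to converge to it as $s\to-\infty$. Ruling this out would require the conserved quantity $x\,e^{-(x^2+y^2)/2}$ from Lemma~\ref{fastarass}, but that lemma comes \emph{after} the present one, so invoking it here is circular. Your unbounded sub-case is also not right: ``$y\to+\infty$ requires $1-x^2$ large negative'' does not follow, since $y\to+\infty$ backward only needs $\int_s^0(1-x^2)\,ds'\to-\infty$, which is compatible with $x$ bounded.

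The paper sidesteps all of this. Its backward argument is the mirror of the forward one: assume $y>0$ for all $s\le 0$; then $x$ is increasing in $s$. If $x>1$ on all of $(-\infty,0]$, then $y'<0$, so $y\ge y_0$ backward, and the formula $x(s)=x_0\exp\bigl(\int_0^s y\bigr)$ forces $x\to 0$ as $s\to-\infty$, contradicting $x>1$. Hence $x<1$ somewhere, and since $x$ is increasing this persists for all earlier $s$, giving $y'\ge\delta>0$ there; integrating backward drives $y$ below $0$. No fixed-point analysis is needed.
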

\begin{proof}
Assume $y>0$ for all $s\geq0$. Then $x'>0$, so $x$ is increasing. If $x<1$ holds for all $s\geq 0$, then $y' = -x^2+1 > 0$, so $y$ is increasing. But then by Equation \eqref{xafy}, we have $x \rightarrow \infty$ when $s\rightarrow \infty$, contradicting $x<1$. Hence, $x$ must reach values above $1$ which implies $y' \leq -\delta < 0$, so $y$ has to go down to $0$, a contradiction. The other case is treated similarly. 
\end{proof}
\begin{lem}
\label{lota}
The solution $(x,y)$ is periodic.
\end{lem}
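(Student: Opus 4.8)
The plan is to produce a first integral of the system and show that the solution traces out a compact, regular level curve of it; a flow on such a curve with no fixed point is automatically periodic.

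\emph{Step 1: a conserved quantity.} Recall $x>0$ throughout, by equation \eqref{xafy}. Set $E(x,y)=x^2+y^2-2\log x$. Using $x'=xy$ and $y'=-x^2+1$,
\[
\frac{d}{ds}E = 2xx' + 2yy' - 2\frac{x'}{x} = 2x^2y + 2y(1-x^2) - 2y = 0,
\]
so $E$ is constant along the solution; write $c=E(x_0,y_0)$.

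\emph{Step 2: the shape of the level sets.} The profile function $h(x)=x^2-2\log x$ on $(0,\infty)$ has $h'(x)=2x-2/x$, vanishing only at $x=1$, and $h''(x)=2+2/x^2>0$; hence $h$ is strictly convex with minimum value $h(1)=1$, and $h(x)\to+\infty$ as $x\to0^+$ and as $x\to+\infty$. If $(x_0,y_0)=(1,0)$ the solution is constant (this is the unit circle) and there is nothing to prove; otherwise $c=y_0^2+h(x_0)>1$. Then $\{E=c\}$ is exactly the set of points $\bigl(x,\pm\sqrt{c-h(x)}\bigr)$ with $x$ ranging over the closed interval $[x_-,x_+]$ bounded by the two roots of $h(x)=c$, where $0<x_-<1<x_+$. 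Thus $\{E=c\}$ is a simple closed oval about $(1,0)$, contained in the compact set $\{x_-\le x\le x_+,\ |y|\le\sqrt{c-1}\}\subset(0,\infty)\times\mathbf R$. Moreover $\nabla E=(2x-2/x,\,2y)$ vanishes only at $(1,0)$, which is not on $\{E=c\}$, so $\{E=c\}$ is a regular level set, hence a smooth embedded circle; and since the only fixed point of the system in $x>0$ is $(1,0)$ (with $E=1\neq c$), the vector field is nonvanishing on $\{E=c\}$.

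\emph{Step 3: periodicity.} The solution stays on the compact set $\{E=c\}$, so it is defined for all $s\in\mathbf R$ and has nonempty $\omega$-limit set $\Lambda\subseteq\{E=c\}$. Since $\{E=c\}$ contains no fixed point, the Poincar\'e--Bendixson theorem (see \cite{gh}) gives that $\Lambda$ is a periodic orbit; being a closed curve inside the embedded circle $\{E=c\}$, it must equal $\{E=c\}$, so the original solution coincides with this periodic orbit and is periodic. (Equivalently, one may skip Poincar\'e--Bendixson: the flow restricted to the invariant embedded circle $\{E=c\}$ is a fixed-point-free flow on $S^1$, and every such flow is periodic.)

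The only delicate point is the topology in Steps 2--3 — checking that $\{E=c\}$ really is a single embedded circle and that a fixed-point-free flow on it is periodic; the rest is routine. A more hands-on alternative uses Lemma \ref{ynidur}: if $y$ vanishes at two distinct parameters $s_1<s_2$, then $s\mapsto 2s_i-s$, $(x,y)\mapsto(x,-y)$ is a symmetry of the system fixing the solution (by uniqueness), and composing the reflections about $s_1$ and $s_2$ shows $(x,y)$ has period $2(s_2-s_1)$; one then only needs $y$ to have two zeros, which follows from Lemma \ref{ynidur} after a time shift, noting that at a zero of $y$ one has $y'=1-x^2\neq0$ unless the solution is the fixed point, so $y$ genuinely changes sign there.
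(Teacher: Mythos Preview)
Your argument is correct. The conserved quantity $E=x^2+y^2-2\log x$ is exactly (up to sign and a factor of $2$) the logarithm of the quantity in Lemma~\ref{fastarass}, so Step~1 is sound; your analysis of the level sets and the Poincar\'e--Bendixson / $S^1$-flow conclusion in Steps~2--3 is also fine.

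However, the paper's own proof takes a quite different and more elementary route: it never invokes the first integral or any phase-plane topology. Instead it observes the reflection symmetry $F(x,-y)=-F(x,y)$, $G(x,-y)=G(x,y)$, so $s\mapsto(x(-s),-y(-s))$ is again a solution; starting from $y_0=0$ (which one may assume after a time shift, by Lemma~\ref{ynidur}) uniqueness forces the trajectory to be symmetric about the $x$-axis, and since Lemma~\ref{ynidur} gives a second crossing of the $x$-axis, the trajectory closes up. This is precisely the ``hands-on alternative'' you sketch at the end. Your main argument buys an explicit description of the orbits as level curves (useful later, and indeed the paper eventually proves Lemma~\ref{fastarass} separately to get the $x$--$r$ relation), at the cost of importing Poincar\'e--Bendixson; the paper's symmetry argument is shorter and needs only ODE uniqueness plus Lemma~\ref{ynidur}.
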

\begin{proof}
Since $F(x,-y)=-F(x,y)$ and $G(x,-y)=G(x,y)$, it is clear that $s \mapsto (x(-s),-y(-s))$ is also a solution to the system of ODEs. Hence, if $y_0=0$, the initial values are the same, so by uniqueness, we get that $(x(-s),-y(-s))=(x(s),y(s))$. But this means that the trajectory is symmetric with respect to reflection across the $x$-axis. By Lemma \ref{ynidur} each trajectory crosses the $x$-axis twice and therefore has to be closed, i.e., periodic.
\end{proof}
\begin{cor}
\label{rlota}
Both the curvature $x$ and the radius $r$ are periodic.
\end{cor}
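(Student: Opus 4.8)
The plan is to read off both periodicity statements directly from Lemma \ref{lota}, which asserts that the solution $(x,y)$ of the planar ODE system is periodic, say with period $L>0$, so that $(x(s+L),y(s+L)) = (x(s),y(s))$ for all $s$.

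First, the curvature. Since the curvature of $X$ equals $x$ (recall $k = \theta' = x$), and $x$ is simply the first coordinate of the periodic orbit $(x,y)$, we have $x(s+L) = x(s)$ for all $s$; hence $x$ is periodic with period $L$. Nothing further is needed here.

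Second, the radius. I would invoke the identity established earlier, $r = |X| = \dfrac{\sqrt{x^2+y^2}}{\sqrt{A^2+B^2}}$, which in the present normalization ($A=0$, $B=-1$, so $A^2+B^2 = 1$) reduces to $r(s) = \sqrt{x(s)^2 + y(s)^2}$. Since $s\mapsto (x(s),y(s))$ is $L$-periodic and $(u,v)\mapsto\sqrt{u^2+v^2}$ is a fixed continuous function, the composition $r$ is $L$-periodic as well. This completes the proof.

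There is no real obstacle: the statement is an immediate consequence of Lemma \ref{lota} together with the algebraic relation between $r$ and $(x,y)$. The only mild subtlety worth flagging is that periodicity of $x$ and $r$ does \emph{not} by itself force the curve $X$ to close up (the angle $\theta = \int_0^s x\,ds'$ need not return to its starting value after one period of $x$); that finer question is exactly what the subsequent analysis of the rotation number addresses, and it is not part of this corollary.
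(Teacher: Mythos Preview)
Your proof is correct and matches the paper's intent: the corollary is stated without proof, as an immediate consequence of Lemma \ref{lota} together with the identity $r^2(A^2+B^2)=x^2+y^2$, which is exactly the argument you give. Your closing remark about $\theta$ not necessarily returning after one period is a helpful clarification and anticipates precisely the $\Delta\theta$ analysis that follows in the paper.
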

The phase portrait of our dynamical system therefore consists of the $y$-axis, the fixed point $(1,0)$ and closed trajectories around $(1,0)$. Note that for each of these the curvature $x$ takes its maximum and minimum on the $x$-axis, since $x'=xy$.

Direct calculations give us the following lemma.
\begin{lem}
\label{fastarass}
The function $x\hspace{0.05cm}e^{-\frac{x^2+y^2}{2}}$ is constant.
\end{lem}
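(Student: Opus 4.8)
The plan is to supply the ``direct calculations'' explicitly, organized so that the origin of the formula is transparent. Along any solution with $x>0$ (the only case we need, as noted above), one can combine the two equations $x'=xy$ and $y'=-x^2+1$ by dividing, obtaining the separable relation $\frac{dy}{dx}=\frac{1-x^2}{xy}$, i.e.\ $y\,dy=\bigl(\tfrac1x-x\bigr)dx$. Integrating both sides gives $\tfrac12 y^2=\log x-\tfrac12 x^2+\mathrm{const}$, so that $\log x-\tfrac12(x^2+y^2)$ is constant along the trajectory, which is exactly the assertion that $x\,e^{-(x^2+y^2)/2}$ is constant. This heuristic pins down the formula; to make it rigorous without reparametrizing by $x$ I would simply differentiate directly.

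Set $\Phi(s):=x(s)\,e^{-(x(s)^2+y(s)^2)/2}$. By the product and chain rules,
\[
\Phi'=e^{-(x^2+y^2)/2}\bigl(x'-x(xx'+yy')\bigr).
\]
Substituting $x'=xy$ and $y'=-x^2+1$ yields $xx'+yy'=x^2y+y(1-x^2)=y$, hence $x(xx'+yy')=xy=x'$, so the bracket vanishes and $\Phi'\equiv 0$. Since $\Phi(0)=x_0\,e^{-(x_0^2+y_0^2)/2}$, the function is constant with this value along every solution.

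There is no genuine obstacle here; the only point requiring a moment's care is the cancellation of the $x^2y$ terms in $xx'+yy'$. It is worth recording the conceptual reason: after multiplying by the integrating factor $e^{-(x^2+y^2)/2}$, the planar field $(F,G)=(xy,\,1-x^2)$ becomes divergence-free, and $\Phi$ is the corresponding first integral, so its level sets are precisely the closed trajectories produced in Lemma~\ref{lota} — which is consistent with the periodicity of $x$ and $r$ recorded in Corollary~\ref{rlota}.
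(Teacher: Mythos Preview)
Your proof is correct and is exactly what the paper has in mind: it merely writes ``Direct calculations give us the following lemma,'' and your explicit differentiation of $\Phi(s)=x\,e^{-(x^2+y^2)/2}$ together with the observation $xx'+yy'=y$ is precisely that calculation. The preliminary separation-of-variables heuristic and the closing remark about the integrating factor are pleasant extras but not needed for the argument.
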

Remember that $(A,B) = (0,-1)$, so we have $r^2 = x^2 + y^2$. Now, imagine we parametrize our solution such that $y_0=0$. By Lemma \ref{fastarass}, we have the following relationship between the curvature $x$ and the radius $r$:
\begin{equation*}
x = x_0e^{\frac{r^2-x_0^2}{2}},
\end{equation*}
where $x_0$ can be either the minimum of the curvature, $x_{min}$ or its maximum, $x_{max}$. In particular, $r$ is an increasing function of $x$, so we have the following corollary.
\begin{cor}
\label{rmin}
The radius takes its maximum and minimum at the same time as the curvature $x$. Moreover, $r_{min} = x_{min}$, $r_{max} = x_{max}$ and $r_{min}e^{-\frac{r_{min}^2}{2}} = r_{max}e^{-\frac{r_{max}^2}{2}}$.
\end{cor}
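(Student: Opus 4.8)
My plan is to read everything off the conserved quantity from Lemma~\ref{fastarass}. Since here $(A,B)=(0,-1)$ we have $r^2=x^2+y^2$, so that lemma says there is a constant $c>0$ with $x(s)=c\,e^{r^2(s)/2}$ for all $s$. The map $\rho\mapsto c\,e^{\rho^2/2}$ is strictly increasing on $[0,\infty)$, and $r(s)\ge 0$, so $x$ is a strictly increasing function of $r$ along the trajectory; hence $r(s_1)\le r(s_2)$ if and only if $x(s_1)\le x(s_2)$. Because $x$ (equivalently $r$) is continuous and periodic by Corollary~\ref{rlota}, it attains a maximum and a minimum, and the equivalence just noted shows that $r$ attains its extrema at exactly the same parameter values as $x$. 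That is the first assertion.

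For the values, I would use that any critical point $s^{*}$ of $x$ satisfies $x'(s^{*})=x(s^{*})y(s^{*})=0$, and since $x>0$ throughout this forces $y(s^{*})=0$. So at a parameter $s^{*}$ where $x$ — hence also $r$ — is maximal we have $r(s^{*})^2=x(s^{*})^2+y(s^{*})^2=x(s^{*})^2$, and as $r,x>0$ this gives $r_{max}=r(s^{*})=x(s^{*})=x_{max}$. The identical argument at a minimizing parameter gives $r_{min}=x_{min}$.

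There is essentially no hard step here once Lemma~\ref{fastarass} is available; the only points requiring a little care are that $\rho\mapsto c\,e^{\rho^2/2}$ is genuinely monotone on the relevant range (which is why $r\ge 0$ is used) and that the extrema of $x$ really do occur on the $x$-axis, which is exactly the observation $x'=xy$ with $x>0$ already made after Corollary~\ref{rlota}. Alternatively, one could avoid invoking Lemma~\ref{fastarass} as a black box and instead compute directly that $(r^2)'=2(xx'+yy')=2y$ while $(\log x)'=y$, so that $r^2-2\log x$ is constant and the same conclusions follow.
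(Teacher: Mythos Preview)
Your argument is correct and is essentially the paper's own approach: both deduce from Lemma~\ref{fastarass} that $x$ is a strictly increasing function of $r$ along the trajectory, and both use that the extrema of $x$ occur where $y=0$ (so $r=x$ there) to read off $r_{\min}=x_{\min}$ and $r_{\max}=x_{\max}$. Your write-up is a bit more explicit about this last step than the paper, but the method is the same.
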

So the curve $X$ consists of infinitely many identical excursions between the two boundaries of the annulus with inner radius $r_{min}$ and outer radius $r_{max}$. The curvature is an increasing function of the radius and reaches its lowest and highest value at the innermost and outermost points, respectively.

Now let's examine a whole excursion, i.e., we go from the outer boundary, touch the inner boundary and go back out the outer boundary. This corresponds to the $(x,y)$ trajectory going one period. Therefore, the difference between the values of the location angle $\phi$ is the same as the difference between the values of the tangent angle $\theta$. This angle difference, $\Delta \theta$, is given by the integral of the curvature $x$ over one period of the $(x,y)$ trajectory. Now we have two possibilities:\\
\\
1) The angle difference $\Delta \theta$ is of the form $\frac{p2\pi}{q}$ for some positive integers $p,q$. Then after $q$ excursions the curve $X$ closes up. Thus, $X$ is a closed curve, an immersed $\mathbf S^1.$ The number $p$ is the rotation number of the curve, and $q$ is the number of times it touches each boundary of the annulus.\\
\\
2) The angle difference  $\Delta \theta$ is not of this form. In this case the curve never closes up and takes infinitely many excursions. It is dense in the annulus.\\

In \cite{al}, Abresch and Langer investigated $\Delta \theta$ as a function of $r_{max}$ and showed that it is a continuous decreasing function with limits
\begin{equation*}
\lim_{r_{max} \rightarrow 1+}\Delta\theta = \sqrt 2 \pi \hspace{0.2cm} \text{ and }\hspace{0.2cm} \lim_{r_{max} \rightarrow \infty}\Delta\theta = \pi.
\end{equation*}
Therefore, we get a closed curve for all mutually prime positive integers $p,q$ such that $\frac{1}{2} < \frac{p}{q} < \frac{\sqrt{2}}{2}$. 
This finishes the proof of the theorem. 

\section{Expanding curves}
\label{ultimo}

For $A=0$ and each $B>0$, we have the following description of the curves satisfying Equation \eqref{jafnan}. 

\begin{thm}
\label{ut}
Each of the curves is convex, properly embedded and asymptotic to the boundary of a cone with vertex at the origin. It is the graph of an even function.

The curves form a one-dimensional family parametrized by their distance to the origin, which can take on any value in $[0,\infty)$.

Under the CSF these curves expand forever as governed by the scaling function $g(t) = \sqrt{2Bt+1}$.
\end{thm}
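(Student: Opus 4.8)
The plan is to run the same phase-plane analysis as in the previous sections, now for the system $x'=xy$, $y'=-x^2-1$ coming from \eqref{jafnan} after rescaling so that $B=1$. Reversing the parametrization if necessary we take $x_0>0$, so $x>0$ throughout; the curvature never vanishes and the curve is convex, which is the first assertion. Since $y'=-x^2-1\le -1$, the function $y$ decreases strictly from $+\infty$ to $-\infty$ and vanishes exactly once; translating $s$ we arrange $y(0)=0$, and then $x'=xy$ shows $x$ has a strict maximum $x_{\max}$ at $s=0$, while $x(s)=x_{\max}\exp\!\big(\int_0^s y\big)\le x_{\max}e^{-s^2/2}\to 0$ as $s\to\pm\infty$. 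The symmetry $s\mapsto(x(-s),-y(-s))$ with uniqueness gives $x$ even, $y$ odd, $\theta=\int_0^s x$ odd, and $X(-s)=-\overline{X(s)}$, i.e.\ the curve is symmetric under reflection across the imaginary axis. Because $A^2+B^2=1$ we have $r^2=x^2+y^2$, so $\frac{d}{ds}r^2=-2y$ forces $r$ to have a strict minimum at $s=0$ with $r_{\min}=|x(0)|=x_{\max}$, and $r\to\infty$ along each of the two arms. Hence the curve has a unique closest point to the origin; each arm has $r$ strictly increasing to $\infty$, hence is embedded and proper; and combined with the fact (established in Section \ref{allirfundnir} for $B\ge 0$) that the curve has no double points, the curve is embedded.

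The crux is to bound the total curvature of each arm. As in Lemma \ref{fastarass}, direct differentiation shows that $\log x+\tfrac12(x^2+y^2)$ is constant along solutions, i.e.\ $x\,e^{r^2/2}$ is constant. On the arm $s\ge 0$ the curvature $x$ decreases monotonically from $x_{\max}$ to $0$, and there the conserved quantity gives $y=-\sqrt{x_{\max}^2-x^2+2\log(x_{\max}/x)}$; substituting into $\psi:=\theta(+\infty)=\int_0^\infty x\,ds$ and changing variables to $x$ yields
\begin{equation*}
\psi=\int_0^{x_{\max}}\frac{dx}{\sqrt{x_{\max}^2-x^2+2\log(x_{\max}/x)}}<\int_0^{x_{\max}}\frac{dx}{\sqrt{x_{\max}^2-x^2}}=\frac{\pi}{2},
\end{equation*}
since $2\log(x_{\max}/x)>0$ for $0<x<x_{\max}$. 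By the oddness of $\theta$ and $\theta'=x>0$ we conclude $\theta(s)\in(-\psi,\psi)\subset(-\tfrac\pi2,\tfrac\pi2)$ for all $s$. I expect this inequality to be the main point of the proof; everything after it is bookkeeping.

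From $\operatorname{Re}X'=\cos\theta>0$ the real part of $X$ is strictly increasing. Since $x\to 0$ and $y\to\mp\infty$, the same computation as in Lemma \ref{markgildi} gives $X/r=e^{i\theta}(-y+ix)/r\to\pm e^{\pm i\psi}$ as $s\to\pm\infty$, so the arms are asymptotic in direction to the rays $\mathbf{R}_{\ge 0}e^{i\psi}$ and $\mathbf{R}_{\ge 0}e^{i(\pi-\psi)}$; as $\cos\psi>0$ their real parts are $\pm\cos\psi$, hence $\operatorname{Re}X(s)\to\pm\infty$ and $X$ is the graph $\{(a,u(a)):a\in\mathbf{R}\}$ of a function $u$, which is even by $X(-s)=-\overline{X(s)}$ and convex because $k=x>0$ (the vertex $X(0)=i\,r_{\min}$ lies on the positive imaginary axis). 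To upgrade ``asymptotic in direction'' to ``asymptotic to the boundary of a cone with vertex at the origin'', write $\phi=\arg X=\theta+\arg(-y+ix)$; since $-y$ is increasing on $(0,\infty)$ one has $x(s')\le x(s)e^{-(-y(s))(s'-s)}$ for $s'\ge s$, so $0\le\psi-\theta(s)=\int_s^\infty x\le x(s)/(-y(s))$ and likewise $\arg(-y+ix)\le x/(-y)$, giving $r(s)\,|\phi(s)-\psi|=O(x(s))\to 0$; thus the distance from $X(s)$ to the ray $\mathbf{R}_{\ge 0}e^{i\psi}$ tends to $0$, and similarly for the other arm. Hence each arm is asymptotic to one of the two rays bounding the cone $\{t\,e^{i\alpha}:t\ge 0,\ \psi\le\alpha\le\pi-\psi\}$.

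It remains to identify the family and the motion. The system has no fixed points (as $y'\le -1$), so each level set of $\log x+\tfrac12(x^2+y^2)$ in $\{x>0\}$ is a single trajectory, and these level sets are indexed bijectively by $x_{\max}=r_{\min}\in(0,\infty)$; since the vertex of each curve sits at $i\,r_{\min}$ and we have normalized $\theta_0=0$, distinct values give non-congruent curves, so the curves form a one-dimensional family parametrized by their distance to the origin. Finally, the motion under the CSF is read off from Section \ref{allirfundnir}: for $A=0$, $B>0$ the admissible choice is $f\equiv 0$, $g(t)=\sqrt{2Bt+1}$, so $\hat X(u,t)=\sqrt{2Bt+1}\,X(u)$ is a pure dilation which expands for all $t>-\tfrac1{2B}$, i.e.\ forever.
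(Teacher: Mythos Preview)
Your argument is correct and follows essentially the same route as the paper: rescale to $B=1$, exploit the symmetry $(x,y)\mapsto(x(-s),-y(-s))$, use the first integral $x\,e^{(x^2+y^2)/2}$, and bound the total curvature by $\pi$ to conclude the curve is a convex graph of an even function. Two minor differences are worth noting. First, you bound $\psi$ by changing variables to $x$ and comparing with $\int_0^{x_{\max}}dx/\sqrt{x_{\max}^2-x^2}$, whereas the paper changes variables to $r$ and evaluates the resulting integral via the substitution $u=\sqrt{e^{r^2-r_{\min}^2}-1}$; both exploit the same conserved quantity and yield the same strict inequality. Second, and more substantively, the paper does not prove the cone asymptotics but cites Ishimura for it; your quantitative estimate $r(s)\,|\phi(s)-\psi|\le r(s)\big((\psi-\theta(s))+\arctan(x/(-y))\big)\le 2x(s)\cdot r(s)/(-y(s))\to 0$ gives a clean self-contained proof that each arm is asymptotic to the corresponding ray through the origin, which is a genuine improvement over the paper's treatment.
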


Curves of this kind can be seen in Figures \ref{graf1} and \ref{graf2}.\\

As before, the proof will be given through a series of lemmas. By rescaling we may assume $B=1$. Then $(x,y)$ is a solution to the system
\begin{equation*}\left\{\begin{aligned}
x' &= xy,\\
y' &= -x^2-1.
\end{aligned}\right.\end{equation*}
From the first equation we immediately get
\begin{equation}
\label{xafy2}
x(s) = x_0e^{\int_0^sy(s')ds'},
\end{equation}
so the curvature $x$ does not change sign. When $x_0 = 0$ we just get the solution $(x,y)$ = $(0,y_0-s)$ which corresponds to $X$ being a straight line (making the scaling outwards vacuous). By parametrizing the curve $X$ backwards if necessary, we may assume $x_0>0$ and hence $x>0$.

Since $y' \leq -1$, it is clear that $\lim_{s\rightarrow \pm \infty}y = \mp \infty$, so by \eqref{xafy2} we get $\lim_{s\rightarrow \pm \infty} x = 0$. Also, since $F(x,-y)=-F(x,y)$ and $G(x,-y)=G(x,y)$, we get the following as in the proof of Lemma \ref{lota}, if we parametrize the solution such that $y_0=0$.
\begin{lem}
\label{typpi}
The curvature $x$ is an even function and $y$ is an odd function.
\end{lem}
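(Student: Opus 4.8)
The plan is to reuse, essentially verbatim, the reflection-symmetry argument from the proof of Lemma~\ref{lota}. The key observation is that for the present vector field $F(x,y) = xy$, $G(x,y) = -x^2-1$ we still have $F(x,-y) = -F(x,y)$ and $G(x,-y) = G(x,y)$. Hence, if $s \mapsto (x(s),y(s))$ solves the system, then so does $s \mapsto (x(-s),-y(-s))$: a one-line chain-rule check shows that the $s$-derivative of the first component is $-x'(-s) = -x(-s)y(-s) = x(-s)\cdot(-y(-s)) = F\big(x(-s),-y(-s)\big)$, and similarly the $s$-derivative of the second component is $-(-y'(-s)) = y'(-s) = -x(-s)^2-1 = G\big(x(-s),-y(-s)\big)$.

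Next I would invoke the normalization stated in the lemma, namely that the solution is parametrized so that $y_0 = y(0) = 0$; this is legitimate because $y' \le -1$ (noted just before the lemma), so $y$ is strictly decreasing, runs from $+\infty$ to $-\infty$, and attains the value $0$ exactly once, after which we shift the arc-length parameter to put that point at $s=0$. With this normalization, the two solutions $(x(s),y(s))$ and $(x(-s),-y(-s))$ both equal $(x_0,0)$ at $s=0$, so by uniqueness of solutions to the (smooth) ODE system they coincide for all $s \in \mathbf R$. Comparing components yields $x(-s) = x(s)$ and $y(-s) = -y(s)$, i.e. $x$ is even and $y$ is odd, as claimed.

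I do not expect any real obstacle: the argument is structurally identical to Lemma~\ref{lota}, and the only change — the sign of the constant in $G$ — is irrelevant to the reflection symmetry being exploited. The single point worth a sentence of justification is the availability of the normalization $y_0 = 0$, which follows immediately from the strict monotonicity of $y$ established above.
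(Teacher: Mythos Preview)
Your proposal is correct and follows exactly the paper's approach: the paper itself simply says that since $F(x,-y)=-F(x,y)$ and $G(x,-y)=G(x,y)$, the argument from the proof of Lemma~\ref{lota} applies verbatim once one parametrizes so that $y_0=0$. Your extra line justifying the normalization $y_0=0$ via the strict monotonicity of $y$ is a welcome clarification but not a departure from the intended proof.
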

Thus, the angle $\theta$ is an odd function (remember we assume $\theta_0=0$), and since $X$ is given by \eqref{EnneinXjafnan}, we have the following corollary.
\begin{cor}
\label{saurmundur}
The curve $X$ is symmetric with respect to reflection across the $y$-axis.
\end{cor}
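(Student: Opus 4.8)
The plan is to obtain the symmetry directly from the parity statements of Lemma \ref{typpi} together with the representation \eqref{EnneinXjafnan}, once the arclength parameter has been normalized appropriately. First I would fix the parametrization so that $y_0 = 0$: since $y' = -x^2 - 1 \le -1$, the function $y$ decreases strictly from $+\infty$ to $-\infty$ and hence vanishes at exactly one parameter value, so I may shift $s$ so that this value is $0$. With this normalization Lemma \ref{typpi} applies, giving that $x$ is even and $y$ is odd; and since $\theta(s) = \int_0^s x(s')\,ds'$ with $\theta_0 = 0$, evenness of $x$ forces $\theta$ to be an odd function.

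Next I would feed these parities into \eqref{EnneinXjafnan}. Writing $X = e^{i\theta}(\tau + i\nu)$ and using $x + iy = (A - iB)(\tau + i\nu)$ with $(A,B) = (0,1)$, one reads off $\tau = -y$ (hence odd) and $\nu = x$ (hence even), so that
\begin{equation*}
X(-s) = e^{i\theta(-s)}\bigl(\tau(-s) + i\nu(-s)\bigr) = e^{-i\theta(s)}\bigl(-\tau(s) + i\nu(s)\bigr) = -\,\overline{e^{i\theta(s)}\bigl(\tau(s) + i\nu(s)\bigr)} = -\,\overline{X(s)}.
\end{equation*}
Since reflection across the $y$-axis is precisely the map $z \mapsto -\bar z$, the identity $X(-s) = -\overline{X(s)}$ says exactly that the image of $X$ is invariant under that reflection, which is the assertion of the corollary.

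I do not expect a genuine obstacle here: the real content sits in Lemma \ref{typpi}, and the corollary is bookkeeping with the complex exponential. The only points deserving a word of care are that one is free to normalize $y_0 = 0$ without loss of generality so that Lemma \ref{typpi} is available, and that one must correctly identify reflection in the $y$-axis with the map $z \mapsto -\bar z$ so that the parities of $\tau$, $\nu$, and $\theta$ combine in the right way. It is also worth remarking that it is the earlier normalization $\theta_0 = 0$ that pins the axis of symmetry to be the $y$-axis; a different choice of $\theta_0$ would merely rotate the entire curve about the origin.
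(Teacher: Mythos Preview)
Your argument is correct and follows precisely the route the paper sketches: use Lemma \ref{typpi} (with the normalization $y_0=0$) to get $x$ even and $y$ odd, deduce that $\theta$ is odd, and then read off the symmetry $X(-s)=-\overline{X(s)}$ from \eqref{EnneinXjafnan}. The paper states this in one line, while you have spelled out the computation $\tau=-y$, $\nu=x$ and the identification of $z\mapsto -\bar z$ with reflection in the $y$-axis, but the substance is the same.
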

Direct calculations yield this lemma.
\begin{lem}
\label{fastarassiprump}
The function $x\hspace{0.05cm}e^{\frac{x^2+y^2}{2}}$ is constant.
\end{lem}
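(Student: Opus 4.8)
The plan is to identify $x\,e^{(x^2+y^2)/2}$ as a conserved quantity of the system $x'=xy$, $y'=-x^2-1$ and then simply check that it is constant along trajectories. There are two natural routes; I would present the second (it is cleaner) and mention the first as motivation.

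\emph{First route: guessing the integral.} Along any trajectory we have $x>0$, so we may eliminate the parameter $s$ and regard $y$ as a function of $x$: $\frac{dy}{dx}=\frac{y'}{x'}=\frac{-x^2-1}{xy}$, i.e.\ $y\,dy=-\bigl(x+\tfrac1x\bigr)dx$. Integrating gives $\tfrac12(x^2+y^2)+\log x=\text{const}$, which exponentiates to the assertion. This also explains why the exponent here has the opposite sign to the one in Lemma \ref{fastarass}: both are instances of $x^{B}e^{(x^2+y^2)/2}$ being constant when $A=0$, since $y'=-x^2-B$ yields $y\,dy=-(x+B/x)\,dx$.

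\emph{Second route: direct differentiation.} Set $\Phi(s)=x(s)\,e^{(x^2+y^2)/2}$. Then $\Phi'=e^{(x^2+y^2)/2}\bigl(x'+x(xx'+yy')\bigr)$, and since the exponential factor is positive it suffices to show the bracket vanishes identically. Substituting $x'=xy$ and $y'=-x^2-1$, the bracket equals $xy+x\bigl(x^2y+y(-x^2-1)\bigr)=xy+x(-y)=0$, so $\Phi'\equiv0$ and $\Phi$ is constant.

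\emph{Main obstacle.} There really isn't one: this is a one-line calculation once the conserved quantity has been written down, and that quantity is handed to us by the analogy with the shrinking case (Lemma \ref{fastarass}) or by the separation of variables above. The only point needing a word of care is that the argument uses $x>0$, which has already been arranged by parametrizing the curve backwards if necessary.
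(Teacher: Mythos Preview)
Your proposal is correct. The paper's own ``proof'' is literally the sentence ``Direct calculations yield this lemma,'' so your second route is precisely what the paper has in mind, only with the one-line computation actually carried out. Your first route (separation of variables) and the observation that both conserved quantities are instances of $x^{B}e^{(x^2+y^2)/2}$ when $A=0$ are pleasant extras the paper does not mention. One small remark: the direct-differentiation argument does not actually require $x>0$, so your caveat at the end applies only to the first route.
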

Since $(A,B)=(0,1)$ we have $r^2 = x^2+y^2$. Thus, the following relationship between the curvature $x$ and the radius $r$ holds:
\begin{equation*}
x = x_0e^{\frac{x_0^2-r^2}{2}},
\end{equation*}
where $x_0 = x_{max}$, the maximum value of the curvature, since $x'=xy$.
In particular, $r$ is a decreasing function of $x$, so the following is true.
\begin{cor}
The radius $r$ takes its minimum when the curvature $x$ takes its maximum. Moreover, $r_{min}=x_{max}$.
\end{cor}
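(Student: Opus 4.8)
The plan is to read the statement straight off the conserved quantity of Lemma \ref{fastarassiprump}, using the normalizations already in force: $B=1$, $x>0$ throughout, the parametrization chosen so that $y_0=0$, and $r^2=x^2+y^2$ since $(A,B)=(0,1)$.

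First I would locate the maximum of $x$. Since $y'=-x^2-1\le-1<0$, the function $y$ is strictly decreasing; together with $y_0=0$ this gives $y>0$ on $(-\infty,0)$ and $y<0$ on $(0,\infty)$. As $x'=xy$ and $x>0$, it follows that $x'>0$ on $(-\infty,0)$ and $x'<0$ on $(0,\infty)$, so $x$ increases up to $s=0$ and decreases afterwards. Hence $x$ attains its global maximum at $s=0$, with $x_{max}=x(0)=x_0$. (This also follows from $x$ being even, Lemma \ref{typpi}.)

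Next, Lemma \ref{fastarassiprump} says $x\,e^{(x^2+y^2)/2}=x\,e^{r^2/2}$ is constant along the trajectory; evaluating at $s=0$ (where $y_0=0$) identifies the constant as $x_{max}\,e^{x_{max}^2/2}$, which is exactly the relation $x=x_0e^{(x_0^2-r^2)/2}$ with $x_0=x_{max}$ quoted just above the corollary. This formula exhibits $x$ as a strictly decreasing function of $r$ on the range of $r$, so the values of $x$ and $r$ are in order-reversing bijection along the curve. Therefore $r$ has an extremum exactly where $x$ does, and $r$ is minimal precisely when $x$ is maximal, i.e.\ at $s=0$. Finally, plugging $s=0$ into $r^2=x^2+y^2$ gives $r_{min}^2=r(0)^2=x_{max}^2+0$, so $r_{min}=x_{max}$.

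There is no real obstacle here; the only things to keep in mind are that $x>0$ everywhere (established earlier from \eqref{xafy2}), which is what makes the conserved quantity nonzero and the map $r\mapsto x$ well defined, and that $x$ genuinely attains a maximum, which is guaranteed by the strict monotonicity of $y$ noted above (equivalently by $\lim_{s\rightarrow\pm\infty}x=0$).
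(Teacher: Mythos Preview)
Your argument is correct and matches the paper's approach: deduce from the conserved quantity $x\,e^{r^2/2}$ that $x$ and $r$ are in order-reversing correspondence, identify the maximum of $x$ at the point where $y=0$ via $x'=xy$, and read off $r_{min}=x_{max}$ there. You have simply spelled out in more detail what the paper compresses into the one line ``$r$ is a decreasing function of $x$'' together with the remark ``$x_0=x_{max}$ \ldots\ since $x'=xy$.''
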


\begin{lem}
\label{totaljota}
The total curvature of $X$ is strictly less than $\pi$.
\end{lem}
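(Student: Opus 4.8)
The plan is to convert the total curvature into an explicit improper integral and bound it by the elementary integral $\int_0^{x_{max}}(x_{max}^2-x^2)^{-1/2}\,dx=\pi/2$.

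First I would exploit the symmetry already built into the parametrization. Since the curvature equals $x$ and $x>0$, the total curvature of $X$ is $\int_{-\infty}^{\infty}x\,ds$; as we have chosen $y_0=0$, Lemma \ref{typpi} tells us $x$ is even (and $\theta$ is odd, with $\theta_0=0$), so this is $2\int_0^{\infty}x\,ds$, and it suffices to show $\int_0^{\infty}x\,ds<\pi/2$. On $[0,\infty)$ we have $y'=-x^2-1<0$ with $y(0)=0$, hence $y<0$ for $s>0$, hence $x'=xy<0$ there; thus $x$ decreases strictly from its maximum $a:=x_{max}=x(0)>0$ down to $\lim_{s\to\infty}x=0$ (noted above), so $s\mapsto x(s)$ is a strictly decreasing bijection of $(0,\infty)$ onto $(0,a)$.

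Next I would change variables from $s$ to $x$. From $x'=xy$ we get $ds=dx/(xy)$, so $\int_0^{\infty}x\,ds=\int_a^0 y^{-1}\,dx=\int_0^a(-y)^{-1}\,dx$. To express $-y$ through $x$, I would use the conserved quantity of Lemma \ref{fastarassiprump}: $x\,e^{(x^2+y^2)/2}$ is constant, and evaluating at $s=0$ gives $x\,e^{(x^2+y^2)/2}\equiv a\,e^{a^2/2}$, i.e. $r^2=x^2+y^2=a^2+2\ln(a/x)$; since $y<0$ for $s>0$ this means
\[
-y=\sqrt{\,a^2-x^2+2\ln(a/x)\,}\qquad\text{for }x\in(0,a),
\]
and hence
\[
\int_0^{\infty}x\,ds=\int_0^a\frac{dx}{\sqrt{\,a^2-x^2+2\ln(a/x)\,}}.
\]

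Finally, since $\ln(a/x)\ge 0$ on $(0,a]$ and is strictly positive on $(0,a)$, the integrand above is a positive continuous function on $(0,a)$ lying strictly below $(a^2-x^2)^{-1/2}$ there; as $\int_0^a(a^2-x^2)^{-1/2}\,dx=[\arcsin(x/a)]_0^a=\pi/2$ converges, we conclude $\int_0^{\infty}x\,ds<\pi/2$ and therefore the total curvature $2\int_0^{\infty}x\,ds$ is strictly less than $\pi$. I do not expect a real obstacle here: the only technical point is convergence of the improper integral at the endpoints (near $x=a$ the integrand is $O((a-x)^{-1/2})$, near $x=0$ it tends to $0$), which legitimizes passing from strict inequality of integrands to strict inequality of the integrals. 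An alternative that sidesteps the substitution: the function $s\mapsto\arccos(x(s)/a)$ is continuous on $[0,\infty)$, equals $0$ at $s=0$, tends to $\pi/2$ as $s\to\infty$, and a short computation with $-y=\sqrt{a^2-x^2+2\ln(a/x)}$ shows its derivative strictly exceeds $x(s)$ for $s>0$; integrating yields $\int_0^{\infty}x\,ds<\pi/2$.
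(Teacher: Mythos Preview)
Your argument is correct. The symmetry reduction, the monotonicity of $x$ on $[0,\infty)$, the change of variables $ds=dx/(xy)$, the use of the conserved quantity to write $-y=\sqrt{a^2-x^2+2\ln(a/x)}$, and the comparison with $\int_0^a(a^2-x^2)^{-1/2}\,dx=\pi/2$ all go through as you describe; the convergence remarks at the endpoints justify the strict inequality.

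The paper's proof follows the same overall scheme (symmetry, a change of variables, a comparison using the conserved quantity) but makes different choices. It integrates against the radius $r$ rather than against $x$: since $r^2=x^2+y^2$ and $rr'=-y$, one gets $\int_0^\infty x\,ds=\int_{r_{\min}}^\infty \frac{x(r)\,r}{\sqrt{r^2-x^2(r)}}\,dr$, then replaces $r$ by $r_{\min}$ in the square root and, using $x=r_{\min}e^{(r_{\min}^2-r^2)/2}$, evaluates the resulting integral exactly via the substitution $u=\sqrt{e^{r^2-r_{\min}^2}-1}$, obtaining $\int_0^\infty\frac{du}{1+u^2}=\pi/2$. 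Your route is more elementary: by integrating in $x$ the ``extra'' term $2\ln(a/x)$ separates off additively under the square root, and dropping it leads directly to the textbook $\arcsin$ integral without a further substitution. The paper's route, on the other hand, keeps the geometric variable $r$ throughout and shows that the comparison integral can be computed in closed form. Both arguments exploit the same first integral $x\,e^{r^2/2}=\text{const}$ and yield the same strict bound.
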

\begin{proof}
The total curvature is given by the integral
\begin{equation*}\begin{aligned}
\int_{-\infty}^{\infty}x(s)ds &= 2\int_0^\infty x(s)ds = 2 \int_{r_{min}}^\infty\frac{x(r)r}{\sqrt{r^2-x^2(r)}}dr\\
&< 2 \int_{r_{min}}^\infty\frac{x(r)r}{\sqrt{r_{min}^2-x^2(r)}}dr\\
&= 2 \int_{r_{min}}^\infty\frac{r}{\sqrt{e^{r^2-r_{min}^2}-1}}dr\\
&= 2\int_0^\infty\frac{1}{u^2+1}du\\
&= \pi,
\end{aligned}\end{equation*}
where we put $u=\sqrt{e^{r^2-r_{min}^2}-1}$ in the second to last equality.
\end{proof}

This result and Corollary \ref{saurmundur} yield that $X$ is the graph of an even function and hence embedded. It also follows that $X$ is convex.  In \cite{ish}, Ishimura shows that the curve is asymptotic to a cone with vertex at the origin, finishing the proof of our theorem. Moreover, he shows that the total curvature is an increasing continuous function of the distance to the origin which gives a homeomorphism between $[0,\infty)$ and $[0,\pi)$.

\section{Figures}
\label{myndir}

\begin{figure}[h!]
\centering 
\includegraphics[totalheight=3.4in]{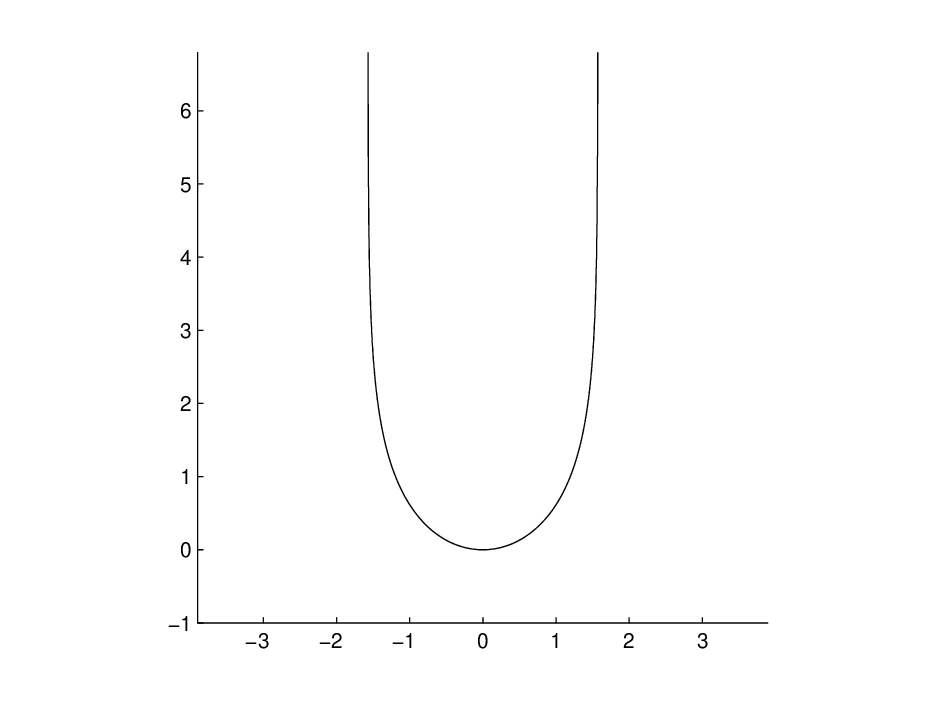} 
\caption{The Grim Reaper curve, which translates under the flow.} 
\label{GrimReaper} 
\end{figure}

\clearpage

\begin{figure}
\centering 
\includegraphics[totalheight=3.4in]{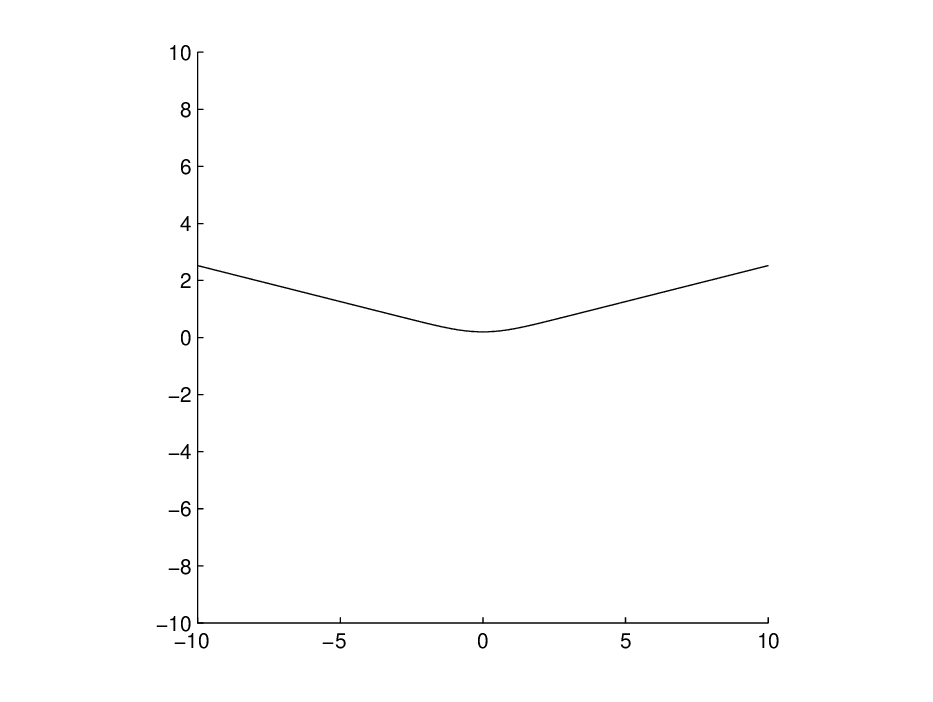} 
\caption{$A = 0$, $B = 1$ (expands) The curve is asymptotic to the boundary of a cone with vertex at the origin.} 
\label{graf1} 
\end{figure}

\begin{figure}
\centering 
\includegraphics[totalheight=3.4in]{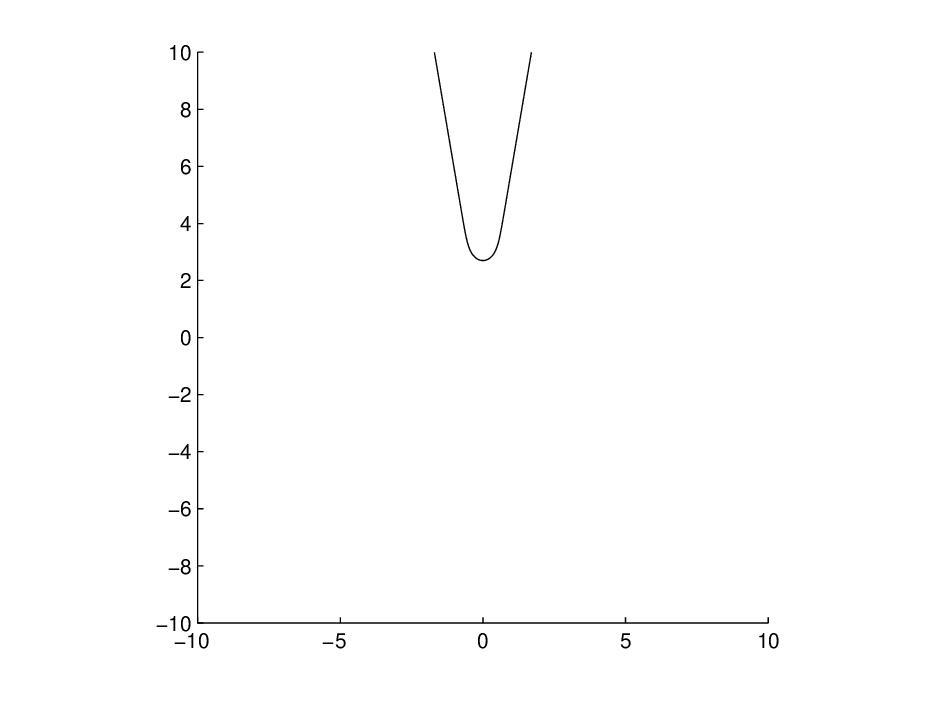} 
\caption{$A = 0$, $B = 1$ (expands) The total curvature increases with the distance to the origin.} 
\label{graf2} 
\end{figure}

\begin{figure}
\centering 
\includegraphics[totalheight=3.4in]{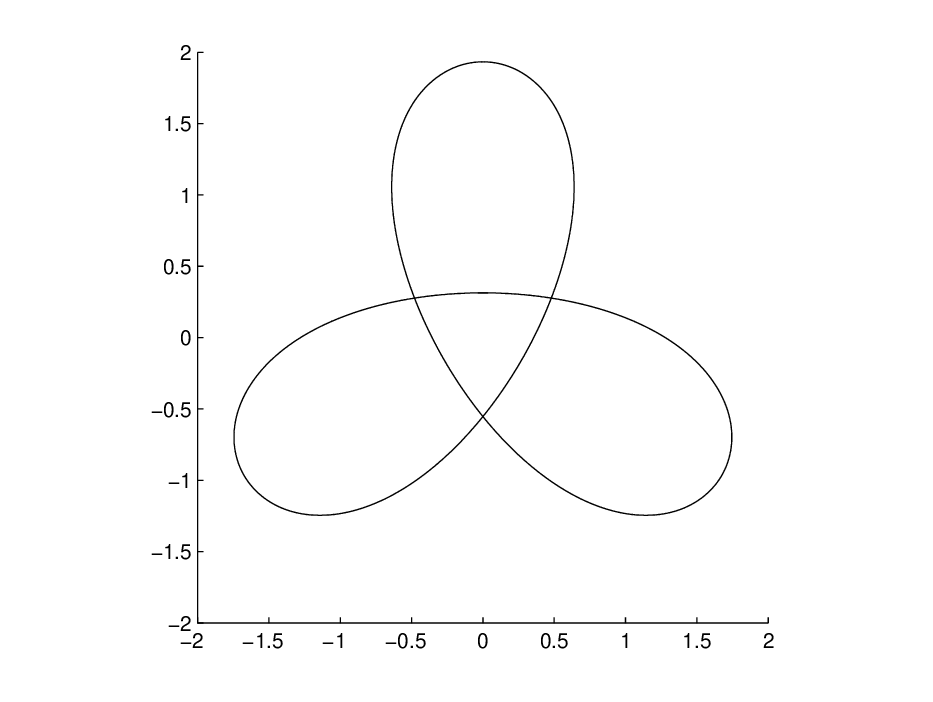} 
\caption{$A = 0$, $B = -1$ (shrinks) Abresch-Langer curve with $p=2$ and $q=3$.} 
\label{AL1} 
\end{figure}

\begin{figure}
\centering 
\includegraphics[totalheight=3.4in]{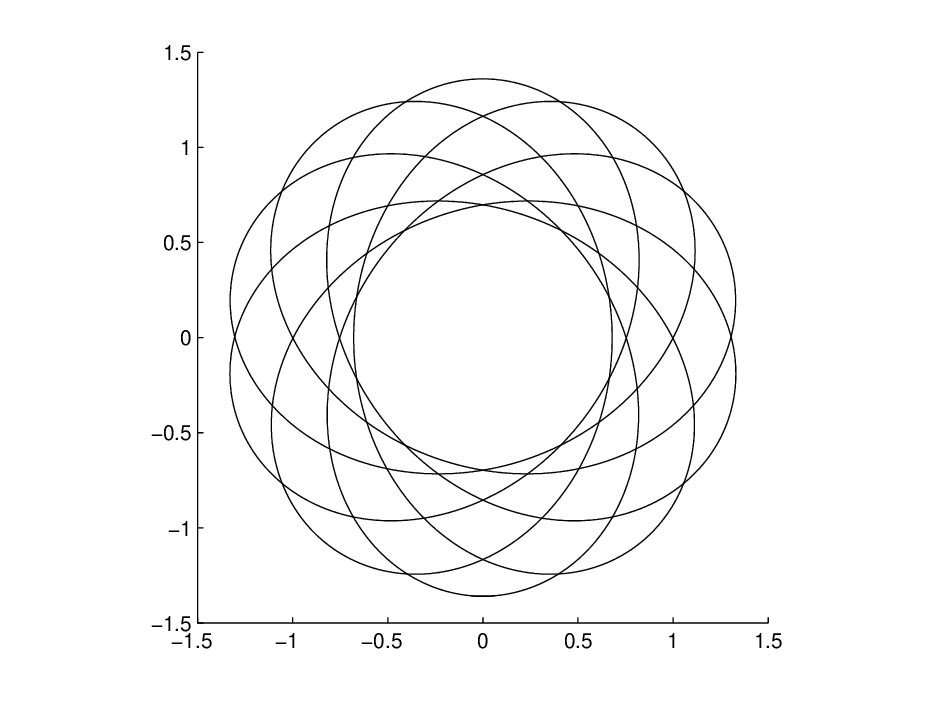} 
\caption{$A = 0$, $B = -1$ (shrinks) Abresch-Langer curve with $p=7$ and $q=10$.} 
\label{AL2} 
\end{figure}

\begin{figure}
\centering 
\includegraphics[totalheight=3.4in]{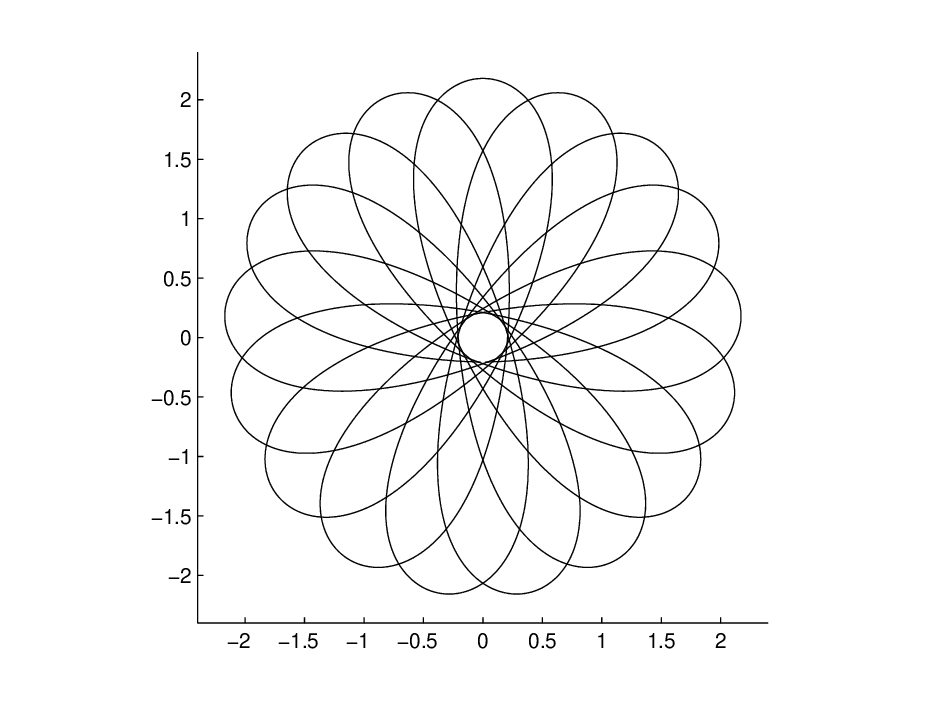} 
\caption{$A = 0$, $B = -1$ (shrinks) Abresch-Langer curve with $p=20$ and $q=31$. } 
\label{AL3} 
\end{figure}

\begin{figure}
\centering 
\includegraphics[totalheight=3.4in]{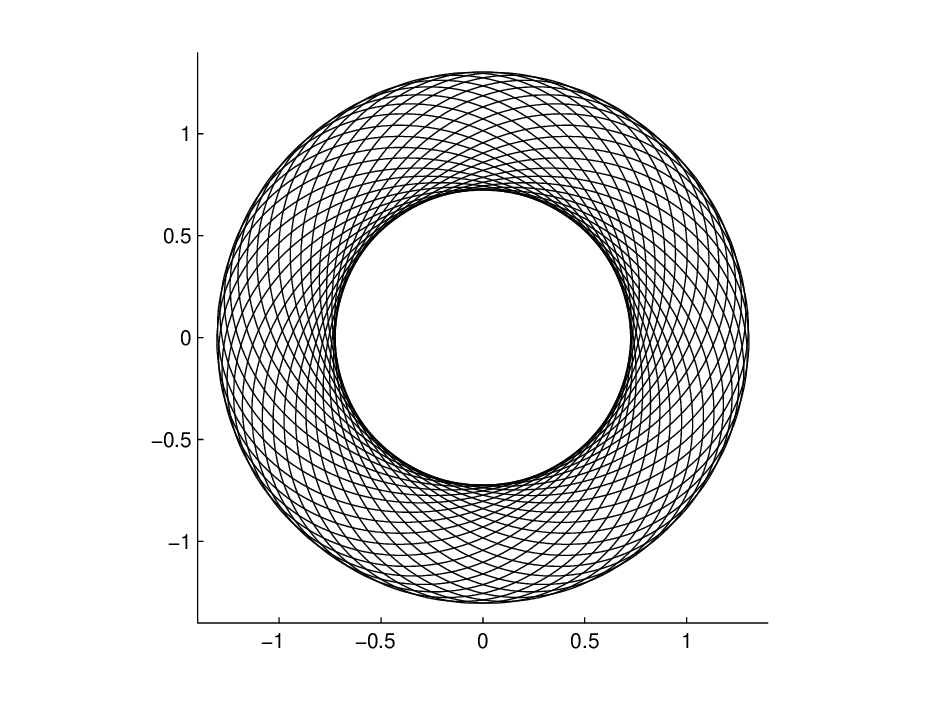} 
\caption{$A = 0$, $B = -1$ (shrinks) Abresch-Langer curve where the values of $p$ and $q$ are very high. } 
\label{AL4} 
\end{figure}

\begin{figure}
\centering 
\includegraphics[totalheight=3.4in]{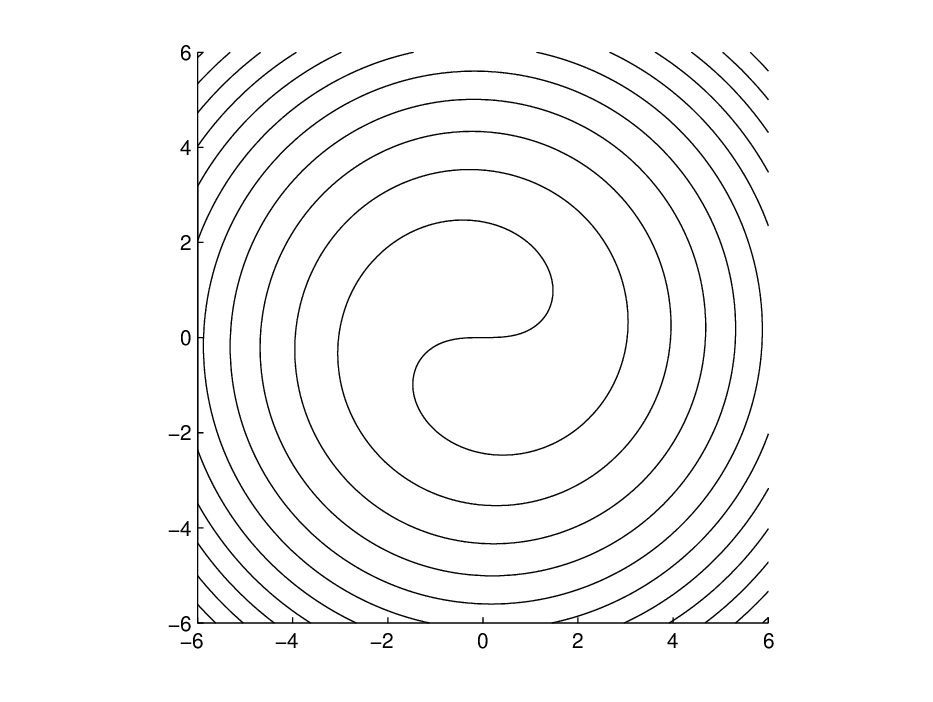} 
\caption{$A = 1$, $B = 0$ (rotates) The symmetric curve is Altschuler's yin-yang spiral.} 
\label{A1B0samhverfi} 
\end{figure}

\begin{figure}
\centering 
\includegraphics[totalheight=3.4in]{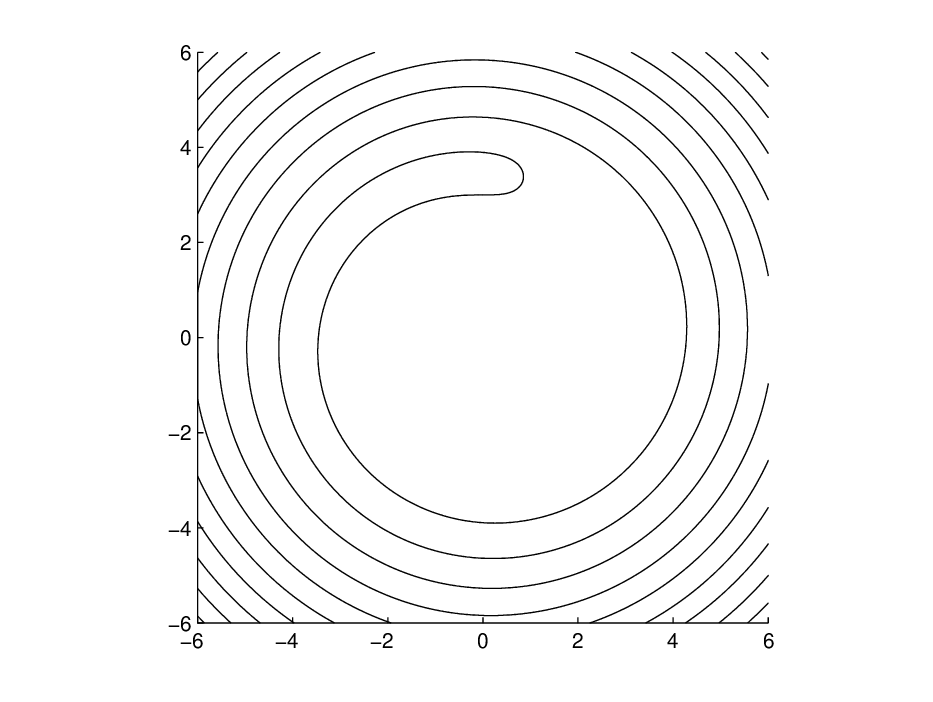} 
\caption{$A = 1$, $B = 0$ (rotates) As the distance to the origin increases, the tip of the spiral looks more and more like the Grim Reaper.} 
\label{A1B0b3} 
\end{figure} 

\begin{figure}
\centering 
\includegraphics[totalheight=3.4in]{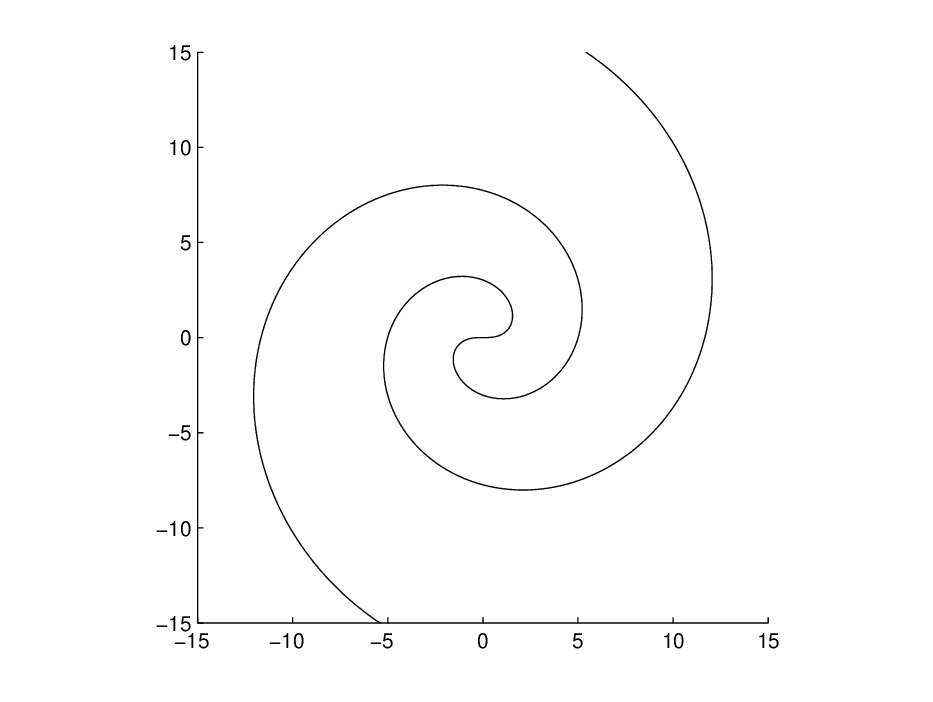} 
\caption{$A = 1$, $B = 0.25$ (rotates and expands) The symmetric curve looks like a less dense version of the the yin-yang spiral.} 
\label{A1B025samhverfi} 
\end{figure}

\begin{figure}
\centering 
\includegraphics[totalheight=3.4in]{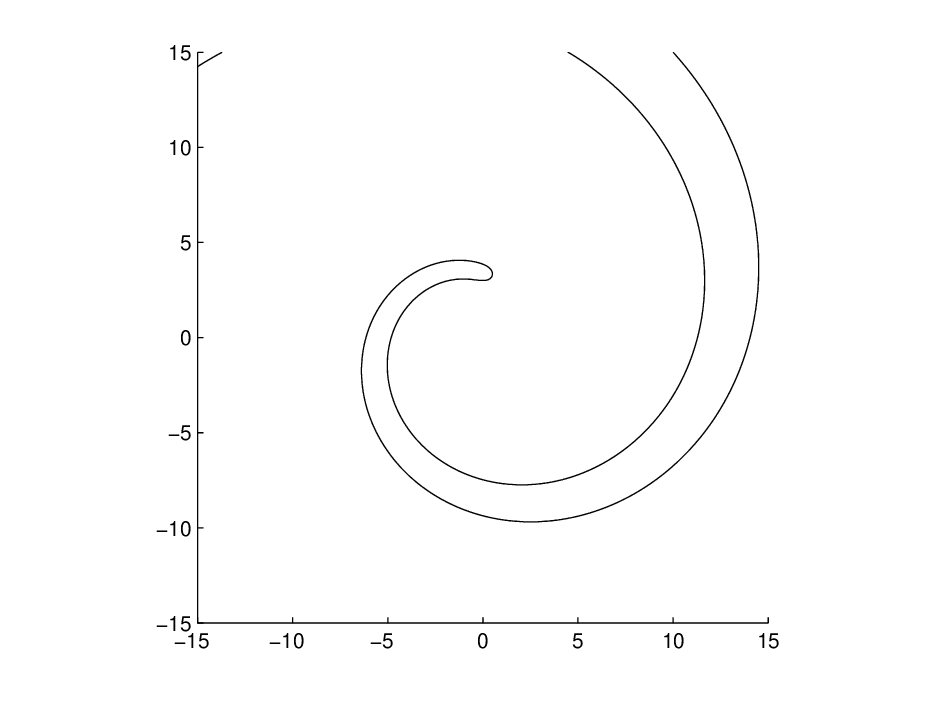} 
\caption{$A = 1$, $B = 0.25$ (rotates and expands) It gets thinner as the distance to the origin increases.} 
\label{A1B025b3} 
\end{figure}

\begin{figure}
\centering 
\includegraphics[totalheight=3.4in]{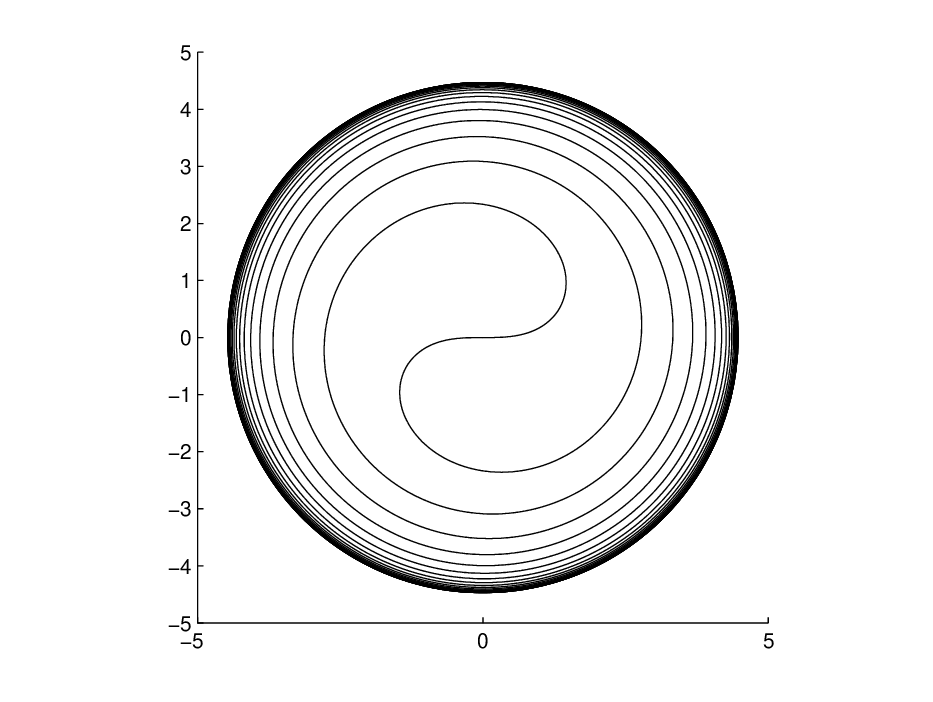} 
\caption{$A = 1$, $B = -0.05$ (rotates and shrinks). The symmetric curve is similar to the yin-yang spiral but only spirals out to the limiting circle. It looks embedded for values of $B$ close to 0.} 
\label{A1Bm005samhverfi} 
\end{figure}

\begin{figure}
\centering 
\includegraphics[totalheight=3.4in]{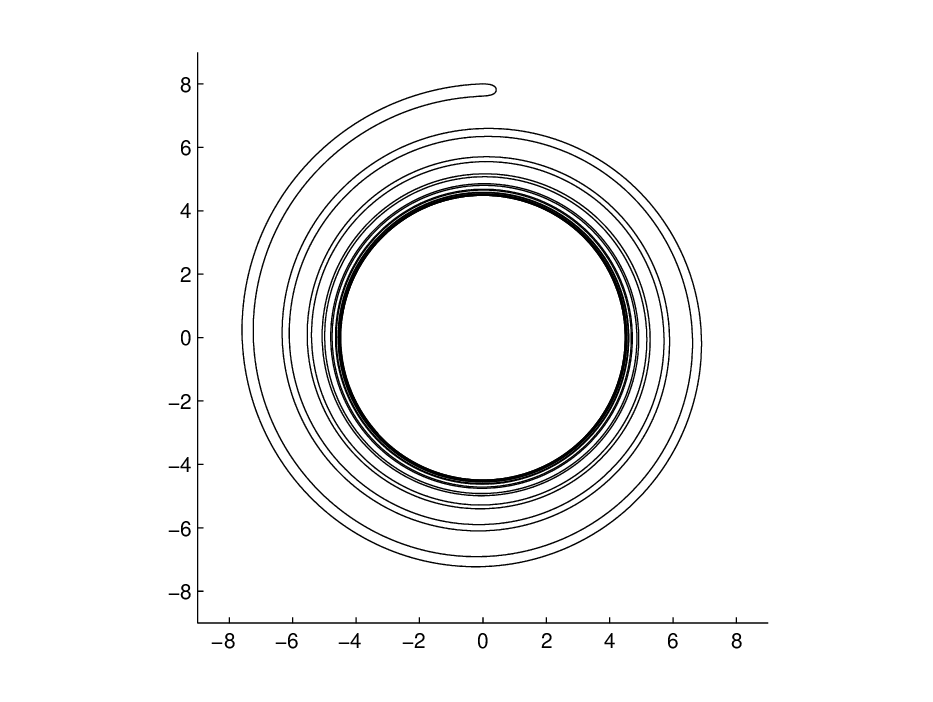} 
\caption{$A = 1$, $B = -0.05$ (rotates and shrinks) Here we start on the outside of the circle and spiral inwards towards it.} 
\label{A1Bm005b8} 
\end{figure}

\begin{figure}
\centering 
\includegraphics[totalheight=3.4in]{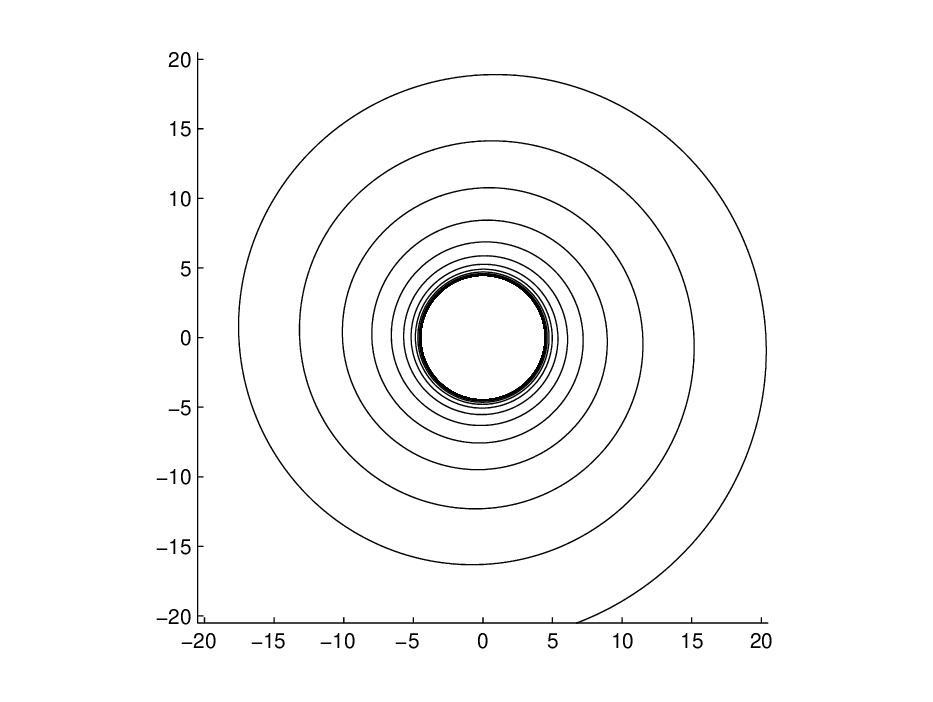} 
\caption{$A = 1$, $B = -0.05$ (rotates and shrinks) This is the comet spiral. It seems to be embedded for values of $B$ close to 0.} 
\label{A1Bm005halastjarnan} 
\end{figure}

\begin{figure}
\centering 
\includegraphics[totalheight=3.4in]{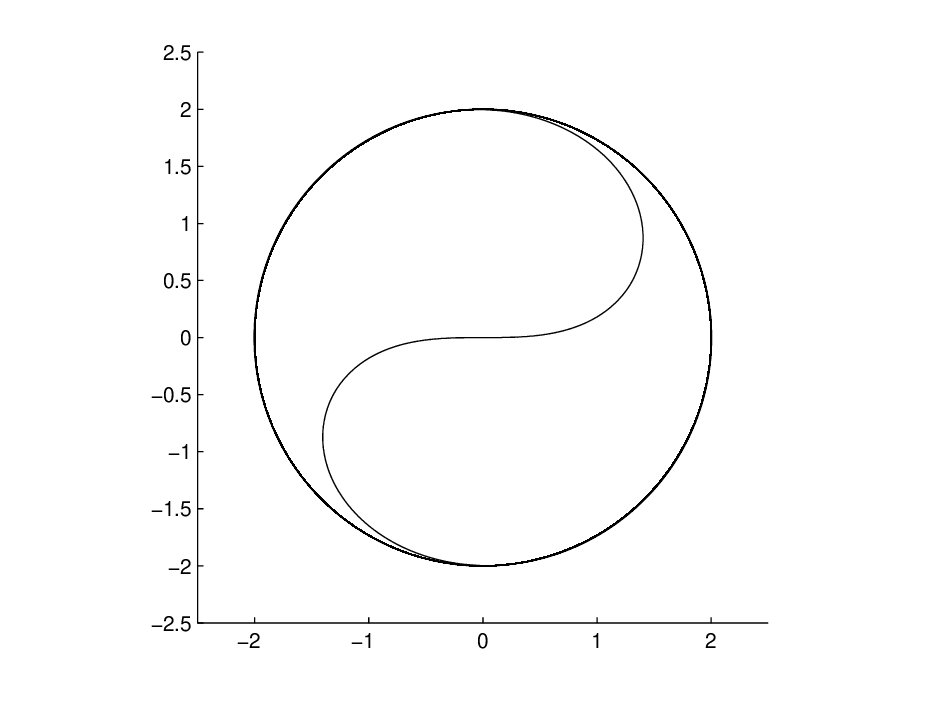} 
\caption{$A = 1$, $B = -0.25$ (rotates and shrinks) Here the symmetric curve is similar to the yin-yang symbol. Is it still embedded?} 
\label{A1Bm025samhverfi} 
\end{figure}

\begin{figure}
\centering 
\includegraphics[totalheight=3.4in]{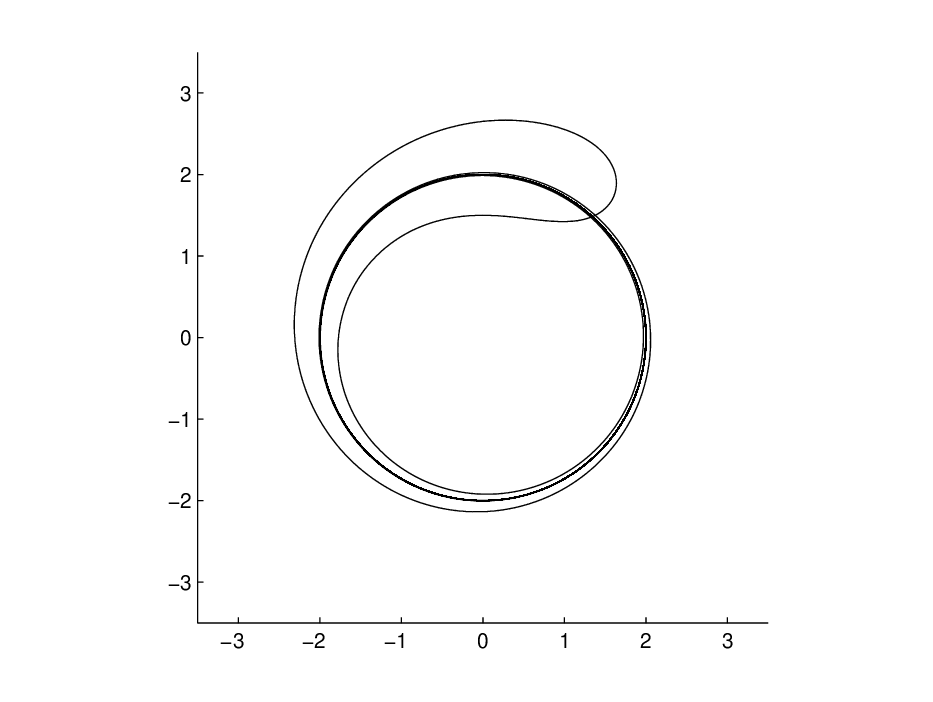} 
\caption{$A = 1$, $B = -0.25$ (rotates and shrinks) A typical curve of this kind looks like a spirit coming out of a circle. Each loop has at least half the area of the circle.} 
\label{A1Bm025b15} 
\end{figure}

\begin{figure}
\centering 
\includegraphics[totalheight=3.4in]{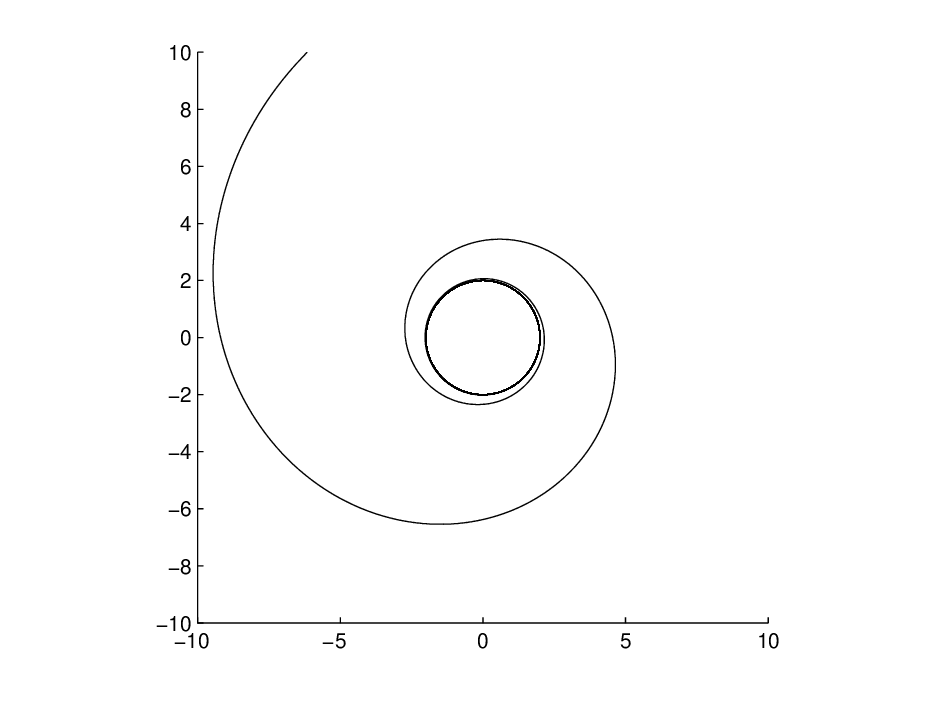} 
\caption{$A = 1$, $B = -0.25$ (rotates and shrinks) Is the comet spiral still embedded here?} 
\label{A1Bm025halastjarnan} 
\end{figure}

\begin{figure}
\centering 
\includegraphics[totalheight=3.4in]{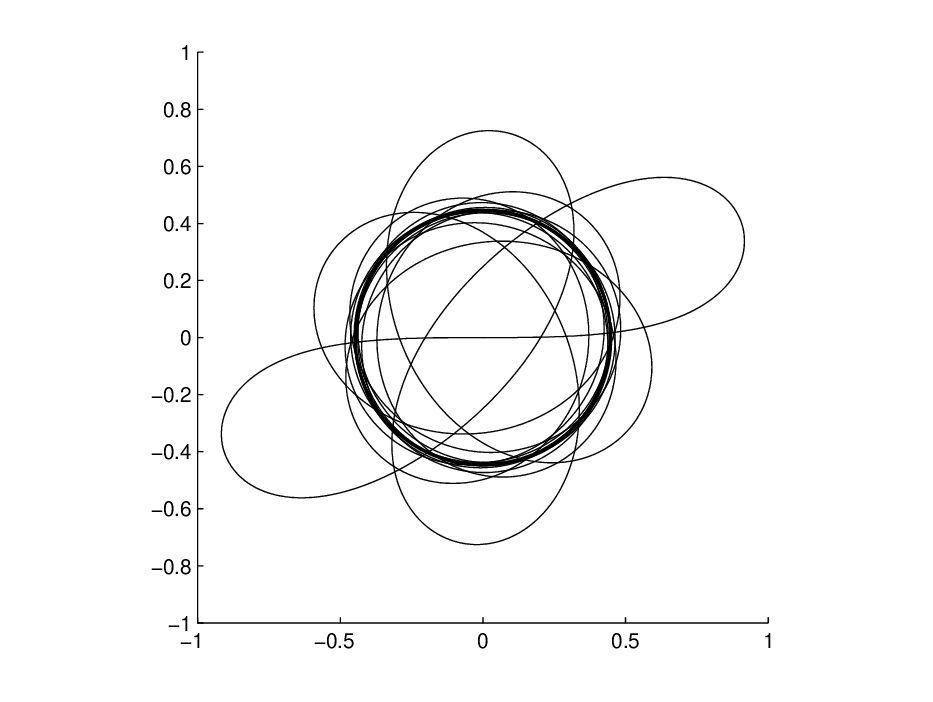} 
\caption{$A = 1$, $B = -5$ (rotates and shrinks) The symmetric curve is no longer embedded when $B$ is small.} 
\label{A1Bm1samhverfi} 
\end{figure}

\begin{figure}
\centering 
\includegraphics[totalheight=3.4in]{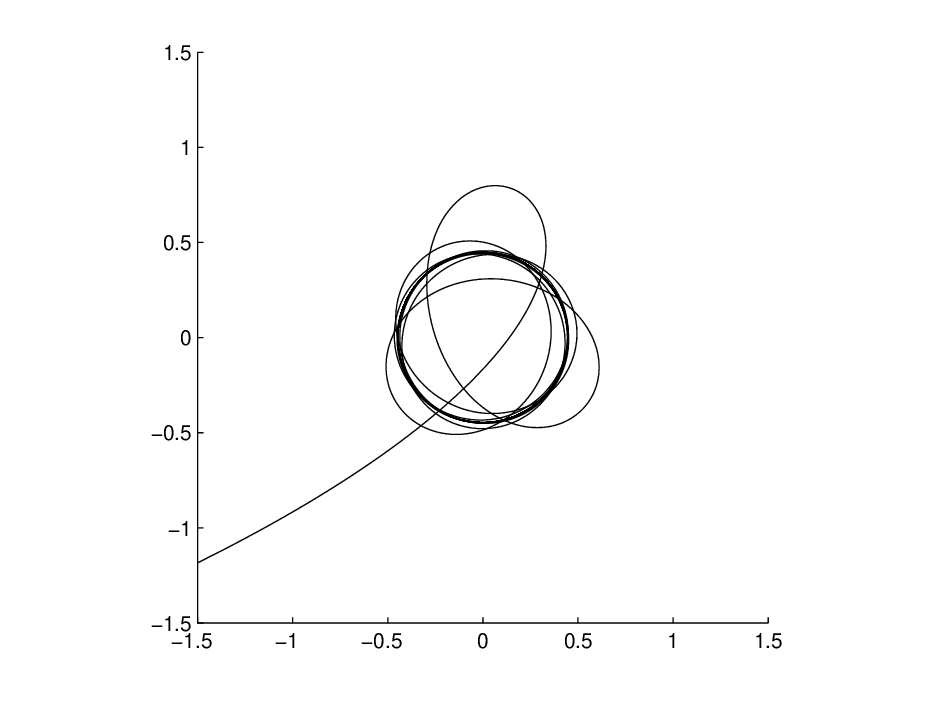} 
\caption{$A = 1$, $B = -5$ (rotates and shrinks) Similarly, the comet spiral is not embedded when $B$ is small.} 
\label{A1Bm5halastjarnan} 
\end{figure}

\clearpage

\section*{Acknowledgements}

The author would like to thank William Minicozzi II for suggesting the problem to him and Caleb Hussey for fruitful discussions on the topic. Moreover, he would like to thank Lu Wang for reading the draft and providing valuable feedback and the referee for suggesting a better proof of Lemma \ref{2ertil}. Finally, he would like to thank his advisor, Tobias Colding, for continuous guidance and support.

\providecommand{\bysame}{\leavevmode\hbox to3em{\hrulefill}\thinspace}
\providecommand{\MR}{\relax\ifhmode\unskip\space\fi MR }
\providecommand{\MRhref}[2]{%
  \href{http://www.ams.org/mathscinet-getitem?mr=#1}{#2}
}
\providecommand{\href}[2]{#2}


\begin{thebibliography}{1}

\bibitem{al}
U.~Abresch and J.~Langer, \emph{The normalized curve shortening flow and
  homothetic solutions}, J. Differential Geom. \textbf{23} (1986), no.~2,
  175--196. \MR{845704 (88d:53001)}

\bibitem{alt}
S.~J. Altschuler, \emph{Singularities of the curve shrinking flow for space
  curves}, J. Differential Geom. \textbf{34} (1991), no.~2, 491--514.
  \MR{1131441 (93a:58036)}

\bibitem{eck}
K.~Ecker, \emph{Regularity theory for mean curvature flow}, Progress in
  Nonlinear Differential Equations and their Applications, 57, Birkh\"auser
  Boston Inc., Boston, MA, 2004. \MR{2024995 (2005b:53108)}

\bibitem{gh}
J.~Guckenheimer and P.~Holmes, \emph{Nonlinear oscillations, dynamical systems,
  and bifurcations of vector fields}, Applied Mathematical Sciences, vol.~42,
  Springer-Verlag, New York, 1983. \MR{709768 (85f:58002)}

\bibitem{ish}
N.~Ishimura, \emph{Curvature evolution of plane curves with prescribed opening
  angle}, Bull. Austral. Math. Soc. \textbf{52} (1995), no.~2, 287--296.
  \MR{1348488 (96g:35089)}

\bibitem{mull}
 W.~W.~Mullins, \emph{Two-dimensional motion of idealized grain boundaries}, J. Appl. Phys. \textbf{27} (1956), 900--904. \MR{0078836 (17,1252g)}

\bibitem{sl}
J.~R. Slotine and W.~Li, \emph{Applied nonlinear control}, Prentice Hall, New
  Jersey, 1991.

\bibitem{urb}
J.~Urbas, \emph{Complete noncompact self-similar solutions of {G}auss curvature
  flows. {I}. {P}ositive powers}, Math. Ann. \textbf{311} (1998), no.~2,
  251--274. \MR{1625754 (99c:53072)}

\end{thebibliography}
\end{document}